\documentclass[psamsfont,a4paper,12pt]{amsart}
\usepackage[dvipsnames]{xcolor}
\usepackage{fullpage, amsmath, amssymb, amsthm, amscd, setspace, bm, graphicx, indentfirst, multirow, tikz, enumerate, verbatim, appendix}
\usepackage{adjustbox,amsfonts,array,graphicx,booktabs,tabularx,multirow,multicol,stmaryrd, tabu}
\usepackage[utf8]{inputenc}
\usepackage{cite}
\usepackage{hyperref}
\usepackage{mathabx,epsfig}
\usepackage{mathtools}
\usepackage{shuffle}
\usepackage{tikz-cd}
\usepackage{thmtools, thm-restate}

\date{\today}

\newtheorem{theorem}{Theorem}[section]
\newtheorem{lemma}[theorem]{Lemma}
\newtheorem{proposition}[theorem]{Proposition}
\newtheorem{corollary}[theorem]{Corollary}
\newtheorem{conjecture}[theorem]{Conjecture}
\newtheorem{problem}[theorem]{Problem}

\theoremstyle{definition}
\newtheorem{definition}[theorem]{Definition}
\newtheorem{remark}[theorem]{Remark}

\theoremstyle{remark}
\newtheorem{example}[theorem]{Example}

\newcommand{\Z}{\mathbb{Z}}
\newcommand{\Zcol}{\overline{\mathbb{Z}}}

\newcommand{\Q}{\mathbb{Q}}
\newcommand{\QAlg}{\mathcal{Q}}

\newcommand{\M}{\mathcal{M}}
\newcommand{\N}{\mathbb{N}}

\newcommand{\F}{\mathfrak{F}}
\newcommand{\olF}{\ol{\F}\mspace{-4mu}}
\newcommand{\olK}{\ol{\kappa}}
\newcommand{\mcR}{{\mathcal{R}}}
\newcommand{\mcS}{{\mathcal{S}}}

\newcommand{\bs}{\boldsymbol}
\newcommand{\bsvec}[1]{\bs{\hat{#1}}}
\newcommand{\mc}{\mathcal}

\newcommand{\ol}{\overleftarrow}
\newcommand{\Poly}{\text{Poly}}
\newcommand{\Sch}{\mathfrak{S}}

\newcommand{\RW}{\mathcal{RW}}
\newcommand{\Schvec}{\bsvec{\textbf{Sch}}}
\newcommand{\ZWInc}{\Z_{\text{WInc}}}
\newcommand{\IncSuf}{\text{IncSuf}}
\newcommand{\QSym}{\text{QSym}}
\newcommand{\BR}{\mathcal{BR}}
\newcommand{\code}{\text{code}}
\newcommand{\dualcode}{\text{dualcode}}
\DeclareMathOperator{\FS}{FS}
\DeclareMathOperator{\BS}{BS}
\DeclareMathOperator{\St}{St}
\DeclareMathOperator{\supp}{supp}
\renewcommand{\supp}{\mathop{\mathrm{supp}}\hspace{0.15em}}

\newcommand{\val}{\text{val}}
\newcommand{\maxcol}[1]{\text{maxcol}(#1)}
\newcommand{\col}[1]{\text{color}(#1)}
\newcommand{\til}[1]{\widetilde{#1}}

\newcommand\ceq{\stackrel{\mathclap{\normalfont\mbox{c}}}{=}}
\newcommand{\nilsim}{{\mathop{\overset{\raisebox{-0.3ex}{$\bullet$}}{\sim}}}}

\newcounter{Comment}

\title{When do Schubert polynomial products stabilize?}

\author{Andrew Hardt}
\address[A.~Hardt]{Department of Mathematics, University of Illinois Urbana-Champaign, Urbana, IL 61801}
\email{ahardt@illinois.edu}
\urladdr{https://andyhardt.github.io/}

\author{David Wallach}
\address[D.~Wallach]{Department of Mathematics, University of Illinois Urbana-Champaign, Urbana, IL 61801}
\email{davidrw3@illinois.edu}

\begin{document}

\begin{abstract}
The \emph{back-stabilization number} for products of Schubert polynomials is the distance the corresponding permutations must be ``shifted'' before the structure constants stabilize. We give an explicit formula for this number and thereby prove a conjecture of N.~Li in a strengthened form. This leads to an additional result: a formula for the smallest $n$ such that a given Schubert product expands completely over $S_n$.

Our method is to explore back-stable fundamental slide polynomials and their products combinatorially, in the context of their associated \emph{words}. We use three main tools: (i) an algebra consisting of \emph{colored words}, with a modified \emph{shuffle product}, and which contains the rings of back (quasi)symmetric functions as subquotients; (ii) the combinatorics of \emph{increasing suffixes} of reduced words; and (iii) the lift of differential operators to the space of colored words.
\end{abstract}

\maketitle

\section{Introduction}

\emph{Schubert polynomials} are polynomial representatives $\Sch_w(x_1,\ldots,x_n)$ for the classes of Schubert varieties in the cohomology ring of the flag variety $GL_n/B$. They were first defined by Lascoux and Sch\"utzenberger \cite{LascouxSchutzenberger}, and satisfy nice combinatorial and geometric properties. They are indexed by permutations in $S_n$, the symmetric group on $n$ letters, and satisfy the stability property $\Sch_w = \Sch_{w\times 1}\in S_{n+1}$, where $w\times 1$ is the permutation acting by $w$ on $1,\ldots, n$ and fixing $n+1$. Thus, we can take $w$ to be an element of $S_{\Z_+}$, the set of bijections $\Z_+\to\Z_+$ fixing all but finitely-many elements. Schubert polynomials form a basis of the polynomial ring $\Q[x_1,x_2,\ldots]$, and their \emph{Schubert structure constants} $c_{u,v}^w$, given by
\begin{equation*}
\Sch_u\Sch_v = \sum_w c_{u,v}^w \Sch_w,
\end{equation*}
are known to be nonnegative integers for geometric reasons \cite{Fulton-Young-tableaux}. However, it remains a longstanding open problem to give a combinatorial proof of this fact.

The first proven monomial expansion of Schubert polynomials was given by Billey, Jockusch, and Stanley \cite{BilleyJockuschStanley}. Given a reduced word $\bs{w}$, let $\mc{C}(\bs{w})$ be the set of \emph{compatible sequences} with top row $\bs{w}$ (Definition~\ref{def:compatible-sequences}), and let $\mc{C}^+(\bs{w})$ be the subset with positive entries. Billey--Jockusch--Stanley showed:
\begin{equation} \label{eq:bjs-formula}
\Sch_w = \sum_{\substack{\bs{w} \text{ reduced}\\ \text{word for $w$}}} \F_{\bs{w}}, \qquad\qquad \text{where} \qquad\qquad \F_{\bs{w}} = \sum_{\bs{\alpha}\in\mc{C}^+(\bs{w})}x^{\bs{\alpha}}.
\end{equation}

Assaf and Searles studied $\mathfrak{F}_{\bs{w}}$ in \cite{AssafSearles}, where it is called the \emph{(fundamental) slide polynomial}.\footnote{In \cite{AssafSearles}, the $\F_{\bs{w}}$ were indexed by integer compositions; see \cite[Example~4.13]{NadeauTewari} for a direct comparison.} Slide polynomials form a basis of the polynomial ring, and while Schubert polynomials are very hard to multiply, the multiplication of slide polynomials is manifestly positive and has a simple formula involving the shuffle product of Eilenberg and MacLane \cite{EilenbergMacLane}.

Now let $w$ be an element of $S_\Z$, the set of bijections $\Z\to\Z$ that fix all but finitely many elements. Let $\gamma(w)$ be the unit shift of $w$, given by $\gamma(w)(i) = w(i-1)+1$. Define
\begin{equation*}
V_{k}(u,v) = \left\{w \in S_{\Z} \;|\; c_{\gamma^k(u),\gamma^k(v)}^{\gamma^k(w)} \ne 0\right\}.
\end{equation*}

Li \cite{Li-back-stable-conjecture} showed that when $u,v,w \in S_{\Z_+}$, $c_{\gamma(u),\gamma(v)}^{\gamma(w)} = c_{u,v}^w$, so $V_k(u,v) \subseteq V_{k+1}(u,v)$.

\begin{definition}
\label{def:back-stabilization}
\;
    \begin{itemize}
        \item The \emph{back-stabilization number} $\BS(u,v)$ of the pair $(u,v)$ is the smallest nonnegative integer $j$ such that $V_{j}(u,v) = V_{j+1}(u,v) = V_{j+2}(u,v) = \ldots$.
        \item The \emph{stability number} $\St(u,v)$ of $(u,v)$ is the smallest nonnegative integer $j$ such that $V_j(u,v) = V_{j+1}(u,v)$.
    \end{itemize}
\end{definition}

Li showed that $\BS(u,v)$ is always finite. Clearly, $\BS(u,v)\ge \St(u,v)$, and Li conjectured that in fact they are equal. For $w\in S_\Z$, let
\[\theta_i(w) = \begin{cases}
    1 & \text{if }\exists j>i \text{ with } w(i)>w(j), \\
    0 & \text{otherwise}.
\end{cases}
\]

Note that $\theta_i(w)$ is $1$ if and only if the $i$th entry of the \textit{Lehmer code} (Definition \ref{def:lehmer-code}) of $w$ is non-zero.

\begin{conjecture}[{\!\!\cite[Conjecture~1.5]{Li-back-stable-conjecture}}]
 \label{conj:li-back-stable} \;
    \begin{enumerate}
        \item[(a)] $\BS(u,v) = \St(u,v)$ for all $u,v$.
        \item[(b)] $\BS(u,v) \le \max(\{i \;|\; \theta_i(u) = 1 \text{ or } \theta_i(v) = 1\})$.
    \end{enumerate}
\end{conjecture}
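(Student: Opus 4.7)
The plan is to use the combinatorial machinery introduced in the preceding sections --- the colored word algebra with its modified shuffle product, the increasing-suffix combinatorics of reduced words, and the lifts of differential operators to colored words --- to give a combinatorial formula for the structure constants $c^w_{u,v}$ that is manifestly compatible with the shift $\gamma$. Both parts of the conjecture will then follow. First, I expand $\Sch_u\Sch_v$ via BJS as a sum over pairs of reduced words $(\bs{u},\bs{v})$, multiply the corresponding slide polynomials $\F_{\bs{u}}\F_{\bs{v}}$ using the (colored) shuffle product, and re-expand in the Schubert basis to extract $c^w_{u,v}$. The shift $\gamma$ acts on each reduced word by uniformly translating letters, and this action commutes with shuffle and expansion once all letters lie above the color boundary separating the ``back'' and ``positive'' regions of the algebra.

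For part (b), note that $\max\{i:\theta_i(w)=1\}$ is the position of the rightmost nonzero entry in the Lehmer code of $w$; equivalently, $w$ is strictly increasing on the positions beyond this index, so the ``essential'' part of $w$ is concentrated in the first $\max\{i:\theta_i(w)=1\}$ positions. Using the increasing-suffix combinatorics of reduced words, I would show that once $k\ge\max(\{i:\theta_i(u)=1 \text{ or } \theta_i(v)=1\})$, all reduced-word letters relevant to the structure-constant computation lie above the color boundary after shifting. Hence further applications of $\gamma$ induce a bijection between the combinatorial data at level $k$ and at level $k+1$, giving $V_k(u,v)=V_{k+1}(u,v)=\cdots$ and therefore $BS(u,v)\le \max(\{i:\theta_i(u)=1 \text{ or } \theta_i(v)=1\})$.

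For part (a), the goal is to upgrade a single equality $V_j(u,v)=V_{j+1}(u,v)$ to $V_j(u,v)=V_k(u,v)$ for all $k\ge j$. The differential operators on colored words provide a ``detector'' $D_k$ whose nonvanishing on the colored-word representative of $\Sch_{\gamma^k(u)}\Sch_{\gamma^k(v)}$ witnesses $V_k(u,v)\ne V_{k-1}(u,v)$; moreover $\gamma$ intertwines $D_{k+1}$ with $D_k$ after letter translation. Consequently $V_j(u,v)=V_{j+1}(u,v)$ forces $D_{j+1}$ to vanish on the product and hence $D_{j+2}$ to vanish on the shifted product, yielding $V_{j+1}(u,v)=V_{j+2}(u,v)$ and inductively $BS(u,v)=St(u,v)$. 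The main obstacle is constructing $D_k$ explicitly and verifying the intertwining: it requires exploiting the subquotient structure of the colored word algebra (which realizes the back (quasi)symmetric function rings) together with the compatibility of the lifted differential operators with the color filtration, and it is where the finer combinatorial control developed earlier in the paper is essential.
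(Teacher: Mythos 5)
Your proposal is a plan rather than a proof, and both halves have genuine gaps. For part (b), the key assertion --- that once $k\ge\max(\{i\;|\;\theta_i(u)=1\text{ or }\theta_i(v)=1\})$ ``all reduced-word letters relevant to the structure-constant computation lie above the color boundary'' --- conflates two different sets of letters. The letters appearing in the colored shuffles of reduced words of $u,v\in S_{\Z_+}$ are indeed all positive, but the permutations $w$ with $\ol{c_{u,v}^w}\ne 0$ are extracted only after passing to bottom-row equivalence, which does \emph{not} preserve supports: the output permutations genuinely use smaller (even nonpositive) letters, e.g.\ $\ol{\Sch}_{321}\ol{\Sch}_{213}$ contains $\ol{\Sch}_{13204}$. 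Bounding how far below $1$ the supports of reduced words of the output $w$ can reach is exactly the content of the conjecture, and your sketch offers no mechanism for it; the ``bijection between combinatorial data at level $k$ and $k+1$'' is a restatement of the desired conclusion $V_k=V_{k+1}=\cdots$. The paper bridges this gap by identifying an invariant of bottom-row equivalence that \emph{is} visible in the shuffles and controls the support of $w$: the increasing $i$-suffix length, which is preserved under equivalence (Lemma~\ref{lem:equiv-preserves-I}), additive under the colored shuffle product (Proposition~\ref{prop:multiplication-adds-I}), bounded by $\lambda_i$ (Proposition~\ref{prop:reducedIncreasingSequences}), and tied to $BS(w)$ via DC-triviality (Corollaries~\ref{cor:dc1}, \ref{cor:dc2}); this yields the exact formula $BS(u,v)=\max_{i\ge 0}(\lambda_i(u)+\lambda_i(v)-i)$, of which (b) is a corollary.

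For part (a), you explicitly do not construct the detector $D_k$, and the step ``$V_j=V_{j+1}$ forces $D_{j+1}$ to vanish, hence $V_{j+1}=V_{j+2}$'' is precisely the nontrivial claim, so nothing is proved. Note also that part (a) does not follow from a bound or formula for $BS(u,v)$ alone: one must show that \emph{every} intermediate value $0\le m\le BS(u,v)$ is realized as $BS(w)$ for some $w$ in the back-stable support, otherwise $V_j$ could stall and then jump. The paper proves exactly this (Lemma~\ref{lem:crucial-last-step}) by applying the operator $(\nabla-(n-1)\xi)^{k-(i-n)-1}$ to the equivalence $\Schvec(u)\Schvec(v)\equiv\sum_w\ol{c_{u,v}^w}\Schvec(w)$ (legitimate by Theorem~\ref{thm:nabla-xi-preseve-equiv}), using positivity of the resulting coefficients on the shuffle side, the projection $T_{(n,\ldots,i)}$, and the rigidity of maximal increasing suffixes (Propositions~\ref{prop:incmerge}(b) and~\ref{prop:Z-star-increasing-suffix}) to force a reduced word containing $n$ but not $n-1$, whence $BS(w)=1-n$ by Corollary~\ref{cor:dc2}. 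Your ``intertwining with $\gamma$'' idea, even if made precise, would still need an argument of this positivity-plus-suffix type (or something like the down-up Monk connectedness of Section~\ref{sec:down-up-connectedness}, which remains conjectural) to rule out gaps in the set of realized back-stabilization numbers.
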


Our first main result is a strengthened form of Conjecture \ref{conj:li-back-stable}, including a precise formula for $\BS(u,v)$. Let 
\[
\lambda_i(w) = \sum_{j\le i} \theta_j(w),
\]
the number of nonzero rows of the Lehmer code of $w$ with index at most $i$.

\begin{theorem}[Back-Stabilization Theorem]
 \label{thm:backstabilization-thm} \;
 Conjecture~\ref{conj:li-back-stable} is true. Moreover, for all $u,v\in S_{\Z_+}$,
 \begin{equation} \label{eq:bs-formula}
     \BS(u,v) = \max_{i \geq 0}(\lambda_i(u) + \lambda_i(v) - i).
 \end{equation}
\end{theorem}

\eqref{eq:bs-formula} implies Conjecture \ref{conj:li-back-stable}(b) because for all $i$, $\lambda_i(v)\le i$, so
\[
\lambda_i(u) + \lambda_i(v) - i \le \lambda_i(u) \le \lambda_\infty(u) = \lambda_{\max(\{i \;|\; \theta_i(u) = 1\})}(u) \leq \max(\{i \;|\; \theta_i(u) = 1\}).
\]

Li proved \cite[Theorem~1.2]{Li-back-stable-conjecture} that Conjecture \ref{conj:li-back-stable} is true whenever either $u$ or $v$ is a Grassmannian permutation. In addition, for arbitrary $u$ and $v$ she proved the bound $\BS(u,v) \le \ell(u) + \ell(v)$ \cite[Theorem~1.3]{Li-back-stable-conjecture}, where $\ell(u)$ refers to the \textit{Coxeter length} of $u$ \cite{BjornerBrenti}. This bound was later improved by Assaf and Searles \cite[Corollary~5.17]{AssafSearles}.

Work of Lam, Lee, and Shimozono \cite{LamLeeShimiozono-schubert} shows that it is productive to work in the \emph{back-stable} setting. For any $w\in S_\Z$, define
\[
\ol{\Sch}_w = \sum_{\substack{\bs{w} \text{ reduced}\\ \text{word for $w$}}} \olF_{\bs{w}}, \qquad\qquad \text{where} \qquad\qquad \olF_{\bs{w}} = \sum_{\bs{\alpha}\in\mc{C}(\bs{w})} x^{\bs{\alpha}}.
\]
$\ol{\Sch}_w$ is the \emph{back-stable Schubert polynomial} \cite{LamLeeShimiozono-schubert}. The back-stable Schubert polynomials form a basis of the ring of \emph{back-symmetric functions}. $\olF_{\bs{w}}$ is the \emph{back-stable slide polynomial}. The back-stable slide polynomials were first defined by Nadeau and Tewari \cite{NadeauTewari} and they form a basis of \emph{back-quasisymmetric functions}.\footnote{Note that none of the back-stable functions we consider are strictly polynomials; however, they share many properties in common with polynomials, and we follow \cite{LamLeeShimiozono-schubert} in using this terminology.}

Let $\ol{c_{u,v}^w}$ be the \emph{back-stable Schubert structure constant} defined by
\begin{equation*}
\ol{\Sch}_u\ol{\Sch}_v = \sum_w \ol{c_{u,v}^w} \ol{\Sch}_w.
\end{equation*}
The ordinary and back-stable structure constants
determine each other. For any $w\in S_\Z$, if $k$ is large enough, $\gamma^k(w)\in S_{\Z_+}$; let $\BS(w)$ denote the smallest such nonnegative~$k$. Then if $u,v\in S_{\Z_+}, w\in S_\Z$,
\[
c_{u,v}^w = \begin{cases} \ol{c_{u,v}^w}, & \text{if } u,v,w\in S_{\Z_+}, \\ 0, & \text{otherwise}, \end{cases}
\qquad \text{and} \qquad
\ol{c_{u,v}^w} = c_{\gamma^k(u),\gamma^k(v)}^{\gamma^k(w)} \quad \text{for any } k\ge \BS(w).
\]

An immediate consequence of Theorem \ref{thm:backstabilization-thm} is a characterization of when $\Sch_u \Sch_v$ has a Schubert expansion identical to the back-stable Schubert expansion of $\ol{\Sch}_u\ol{\Sch}_v$.
\begin{corollary} For all $u,v\in S_{\Z_+}$,
\[
    \ol{c_{u,v}^w} = c_{u,v}^w \text{ for all } w
    \iff c_{u,v}^w = c_{\gamma(u),\gamma(v)}^{\gamma(w)} \text{ for all } w
    \iff \lambda_i(u) + \lambda_i(v) \leqslant i \text{ for all } i\ge 0.
\]
\end{corollary}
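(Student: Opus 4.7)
My plan is to show that each of the three conditions is equivalent to the single condition $BS(u,v) = 0$, using Theorem~\ref{thm:main-thm}, which both establishes $BS(u,v) = St(u,v)$ and provides the formula~\eqref{eq:bs-formula}. The third condition is essentially immediate: since $u,v\in S_{\Z_+}$ gives $\lambda_0(u) = \lambda_0(v) = 0$, the $i = 0$ term in the maximum equals $0$, so $BS(u,v) \geq 0$ always, and $BS(u,v) = 0$ exactly when $\lambda_i(u) + \lambda_i(v) \leq i$ for all $i \geq 0$.

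For the second condition, $c_{u,v}^w = c_{\gamma(u),\gamma(v)}^{\gamma(w)}$ is automatic for $w \in S_{\Z_+}$ by Li's shift-invariance. For $w \notin S_{\Z_+}$ the left-hand side vanishes, so the condition reduces to $c_{\gamma(u),\gamma(v)}^{\gamma(w)} = 0$, i.e.\ $V_1(u,v) \subseteq S_{\Z_+}$. Combined with the automatic inclusions $V_0 \subseteq V_1$ and $V_0 \subseteq S_{\Z_+}$ (the ordinary product $\Sch_u \Sch_v$ lies in $\Q[x_1,x_2,\ldots]$), this is equivalent to $V_0(u,v) = V_1(u,v)$, that is $St(u,v) = 0$, which by the main theorem is $BS(u,v) = 0$.

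For the first condition, I would use the formula $\ol{c_{u,v}^w} = c_{\gamma^k(u), \gamma^k(v)}^{\gamma^k(w)}$ for $k \geq BS(w)$. For $w \in S_{\Z_+}$ the identity $\ol{c_{u,v}^w} = c_{u,v}^w$ is immediate from the case division displayed just before the corollary. For $w \notin S_{\Z_+}$ the condition becomes $\ol{c_{u,v}^w} = 0$, equivalently $w \notin V_k(u,v)$ for $k \gg 0$; since the $V_k$ stabilize at $V_{BS(u,v)}$, this says $V_{BS(u,v)}(u,v) \subseteq S_{\Z_+}$. Together with $V_0 \subseteq S_{\Z_+}$ and Li's shift-invariance on $S_{\Z_+}$, this forces $V_0 = V_{BS(u,v)}$, hence $BS(u,v) = 0$; the converse is immediate since $BS(u,v) = 0$ gives $V_{BS(u,v)} = V_0 \subseteq S_{\Z_+}$.

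The substantive content lies entirely in Theorem~\ref{thm:main-thm}; the corollary itself is pure bookkeeping. The only point requiring care is the dichotomy between $w \in S_{\Z_+}$, where Li's translation invariance makes every identity in sight automatic, and $w \notin S_{\Z_+}$, where the nontrivial information of each condition lives and is measured by the stabilization of the chain $V_0 \subseteq V_1 \subseteq \cdots$. I do not anticipate any genuine obstacle beyond keeping this split straight.
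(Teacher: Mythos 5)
Your proposal is correct and takes essentially the same route as the paper: the paper presents this corollary as an immediate consequence of Theorem~\ref{thm:main-thm}, and your reduction of all three conditions to the single statement $BS(u,v)=0$ (using $BS=St$ together with the formula \eqref{eq:bs-formula}, plus the $w\in S_{\Z_+}$ versus $w\notin S_{\Z_+}$ dichotomy) is exactly the bookkeeping left implicit there.
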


\begin{example}
Let $u = 321, v = 213\in S_3\subseteq S_{\Z_+}$, expressed in one-line notation. We have $\lambda_1(u) = 1$, $\lambda_i(u)=2$ for $i\ge 2$, and $\lambda_i(v) = 1$ for $i\ge 1$; in particular, $\max(\{i \;|\; \theta_i(u) = 1\}) = 2$ and $\max(\{i \;|\; \theta_i(v) = 1\}) = 1$, so Conjecture~\ref{conj:li-back-stable}(b) says that $\BS(u,v)\le 2$. On the other hand, by \eqref{eq:bs-formula}, $\BS(u,v) = \max_{i \geq 0}(\lambda_i(u) + \lambda_i(v) - i) = 1$ (and we only need to check $i\le 2$ since $u$ and $v$ fix every integer $\ge 3$). Indeed, computing the back-stable Schubert product,
\[
\ol{\Sch}_{321}\ol{\Sch}_{213} = \ol{\Sch}_{4213} + \ol{\Sch}_{13204} + \ol{\Sch}_{23014}.
\]
On the right side, $4213$ is in $S_{\Z_+}$, while the other permutations are not, so we have $\Sch_{321}\Sch_{213} = \Sch_{4213}$. Applying $\gamma$, $\Sch_{1432}\Sch_{1324} = \Sch_{15324} + \Sch_{24315} + \Sch_{34125}$, and the product has stabilized. Applying $\gamma$ again yields no new permutations: $\Sch_{12543}\Sch_{12435} = \Sch_{126435} + \Sch_{135426} + \Sch_{145236}$.
\end{example}

Theorem~\ref{thm:backstabilization-thm} leads us to consider a different Schubert calculus problem, which turns out to be closely related. For $w\in S_{\Z_+}$, let the \emph{forward-stability number} for $w$ be the integer
\[
\FS(w) = \min\{n\ge 1 \;|\; w\in S_n\},
\]
and if $u,v\in S_{\Z_+}$, let
\begin{equation} \label{eq:forward-stability-def}
\FS(u,v) = \max_{w|c_{u,v}^w\ne 0} \FS(w).
\end{equation}
$\FS(u,v)$ is the smallest integer $n$ such that the Schubert product $\Sch_u\Sch_v$ can be fully realized in the flag variety $GL_n/B$.

Let $\Lambda_i(w) = |\{j\ge i \;|\; \exists j'<j \text{ with } w(j')>w(j)\}|$, the number of nonzero entries of the \emph{dual Lehmer code} (Definition~\ref{def:dual-lehmer-code}) of $w$ with index at least $i$.

\begin{theorem}[Forward-Stability Theorem] \label{thm:minimal-flag-variety}
    For all $u,v\in S_{\Z_+}$,
    \begin{equation} \label{eq:mfv-formula}
    \FS(u,v) = \max_{i\le 1+\max(\FS(u),\FS(v))}(\Lambda_i(u)+\Lambda_i(v)+i-1).
    \end{equation}
\end{theorem}

Theorem~\ref{thm:minimal-flag-variety} is in some sense a ``dual'' result to Theorem~\ref{thm:backstabilization-thm}. Schubert structure constants are symmetric under conjugation by $w_0$ (Lemma~\ref{lem:w0-cuvw-relation}), and as a consequence $\FS(u,v)$ is closely related to $\BS(w_0uw_0,w_0vw_0)$. However, Theorem~\ref{thm:backstabilization-thm} and Theorem~\ref{thm:minimal-flag-variety} are not equivalent, since the former involves back-stable structure constants while the latter involves ordinary ones.

As a direct consequence of Theorem~\ref{thm:minimal-flag-variety}, we have the following simple check for whether a Schubert product fully manifests in a given flag variety:

\begin{corollary}
    Let $u,v\in S_n$. Then,
    \[
    w\in S_n \text{ for all } w \text{ with } c_{u,v}^w\ne 0 \quad\iff\quad \Lambda_i(u) + \Lambda_i(v) \le n+1-i \text{ for all } 1\le i\le n.
    \]
\end{corollary}

\begin{example}
Let $u = 436521, v = 54312$. We have $\FS(u)=6, \FS(v)=5$, and
\[
(\Lambda_i(u))_{i=1,\ldots,7} = (4,4,3,3,2,1,0), \qquad (\Lambda_i(v))_{i=1,\ldots,7} = (4,4,3,2,1,0,0),
\]
\[
(\Lambda_i(u)+\Lambda_i(v)+i-1)_{i=1,\ldots,7} = (8,9,8,8,7,6,6),
\]
and so by Theorem~\ref{thm:minimal-flag-variety}, $\FS(u,v) = 9$, and indeed
\[
\Sch_{436521} \Sch_{54312} = \Sch_{86732145} + \Sch_{87532146} + \Sch_{965321478}.
\]
\end{example}

It is natural to seek a generalization of Theorem~\ref{thm:minimal-flag-variety} to products of Schubert polynomials in other classical Lie types \cite{BilleyHaiman}; further discussion may appear elsewhere.

Our follow-up paper \cite{HardtWallach-Grothendieck} shows that products of double Schubert, Grothendieck, and double Grothendieck polynomials have the same back- and forward-stabilization numbers as the corresponding products of Schubert polynomials.

\medskip

The remaining sections are organized as follows. Section \ref{sec:background} is background on words, permutations, and the shuffle product. In Section \ref{sec:word-polys}, we define colored words and the colored shuffle product, and give background on back-stable (quasi)symmetric functions. Section \ref{sec:colored-shuffle-algebra} defines our main object of study, the colored shuffle algebra $\QAlg$. We realize the rings of back-stable (quasi)symmetric functions as subquotients of this ring, discuss weight-preserving maps inside $\QAlg$, and finally, define lifts to $\QAlg$ of two differential operators, $\xi$ and $\nabla$, found in \cite{Stanley-schubert-shenanigans, Nenashev}.

Section \ref{sec:backstabilization-thm-proof} contains the proof of Theorem~\ref{thm:backstabilization-thm}. Much of the section is devoted to combinatorial properties of increasing suffixes, including one result (Theorem \ref{thm:increasing-suffix-product}) on the lengths of increasing suffixes which appear in a Schubert product, and another (Corollary \ref{cor:lehmer-nonzero-relationship}) bounding the number of nonzero rows in the Lehmer code. Finally, we combine these tools to prove Theorem~\ref{thm:backstabilization-thm}.

Section \ref{sec:forward-stabiization} contains the proof of Theorem~\ref{thm:minimal-flag-variety}. On the way, we prove a similar result (Corollary~\ref{cor:forward-stabilization}), which is in a precise sense dual to Theorem~\ref{thm:backstabilization-thm}. Corollary~\ref{cor:forward-stabilization} and Theorem~\ref{thm:minimal-flag-variety} only differ in that the former considers back-stable structure constants, and the latter ordinary structure constants, although to go between the two results involves most of the technical tools we develop in this paper.

The final two sections are on related topics. In Section \ref{sec:down-up-connectedness}, we define two notions of what we call ``down-up moves'', and conjecture that the sets of permutations which appear in a (back-stable) Schubert product are connected under these moves. These conjectures are partially-ordered by strength, we show that the stronger ones imply an alternate proof of part (a) of Conjecture~\ref{conj:li-back-stable}. In Section \ref{sec:back-stable-keys}, we define back-stable versions of \emph{key polynomials} as sums of slide polynomials, and prove that they are a basis for the space of back-symmetric functions.

\subsection*{Acknowledgements}

The authors are very grateful to Alexander Yong for numerous suggestions, including participation in the genesis of discussions which eventually led to this paper. Additionally, the authors thank Vic Reiner for helpful discussions. This work benefited from computations using \textsc{SageMath}.

Both authors were partially supported by NSF RTG grant DMS-1937241. D.~W. was supported through the ICLUE program at the University of Illinois Urbana-Champaign.

\section{Words and permutations} \label{sec:background}

\subsection{Words} \label{sec:words}
For some set $\mathcal{A}$, a \emph{word} over the alphabet $\mathcal{A}$ is a finite sequence $\bs{p} = (p_1, p_2, \ldots, p_k)$, where $p_i \in \mathcal{A}$ for all $1\le i\le k$; we will leave out the punctuation when convenient. We call $p_i$ the $i$th \emph{entry} of $\bs{p}$, and we will also denote this entry by $\bs{p}_i$. The \emph{support} of $\bs{p}$ is the set, $\supp{\bs{p}} = \{p_1,p_2,\ldots,p_k\}$, consisting of the entries of $\bs{p}$. The \emph{length} of $\bs{p}$, denoted $\ell(\bs{p})$, is simply the number of entries. The unique word of length $0$ is called the \textit{empty word} and is denoted with $\emptyset$. Let $\mathcal{A}^k$ be the set of words of length $k$, and let \[\mathcal{A}^* = \bigcup_{k\ge 0} \mathcal{A}^k.\]

If $\bs{q},\bs{r}\in\mathcal{A}^*$, then the \emph{concatenation} of $\bs{q}$ and $\bs{r}$ is the word $\bs{q} \circ \bs{r} = (q_1,\ldots,q_{\ell(b)},r_1,\ldots,r_{\ell(c)})$. $\bs{r} = (r_1,r_2,\ldots,r_m)$ is a \emph{subword} of $\bs{p}$ if $\bs{r} = (p_{i_1},\ldots, p_{i_m})$ with $i_1<i_2<\cdots<i_m$. If $\bs{p} = \bs{q}\circ \bs{r}$, then $\bs{q}$ is a \emph{prefix} of $\bs{p}$ and $\bs{r}$ is a \emph{suffix} of $\bs{p}$.

Defined by Eilenberg and Mac Lane \cite{EilenbergMacLane}, the \emph{shuffle product} is a linear operator on $\Q[\mathcal{A}^*]$, the vector space of formal finite $\Q$-linear combinations of elements of $\mathcal{A}^*$. For $\bs{p} = (p_1, \ldots, p_j) \in \mathcal{A}^*$ and $\bs{q} = (q_1, \ldots,q_j) \in \mathcal{A}^*$, the shuffle product of $\bs{p}$ and $\bs{q}$ is defined recursively as \[\bs{p} \shuffle \emptyset = \emptyset \shuffle \bs{p} = \bs{p},\] and \[(p_1, \ldots, p_j) \shuffle (q_1, \ldots,q_k) = p_1 \circ ((p_2 \ldots, p_j) \shuffle (q_1, \ldots, q_k)) + q_1 \circ ((p_1 \ldots, p_j) \shuffle (q_2, \ldots, q_k)).\]

$\bs{p}\shuffle\bs{q}$ is the formal sum of all words $\bs{r}\in\mathcal{A}^*$ which are obtained by interlacing the entries of $\bs{p}$ and $\bs{q}$ in an order-preserving way. We call such words \emph{shuffles} of $\bs{p}$ and $\bs{q}$. Note that the concatenation $\bs{p}\circ\bs{q}$ is one such shuffle, and every shuffle contains $\bs{p}$ and $\bs{q}$ as subwords. In any shuffle $\bs{r}$ of $\bs{p}$ and $\bs{q}$, call the entries of $\bs{r}$ taken from $\bs{p}$ the \emph{$\bs{p}$-entries}, and call the entries taken from $\bs{q}$ the \emph{$\bs{q}$-entries}.

\subsection{Permutations}

A particularly important context is when a word is a reduced word for a permutation. The basics of permutations and their Coxeter presentation are well-known. We give the relevant background below. For more details, see \cite{BjornerBrenti}.

A \emph{permutation} of a set $A\subseteq\Z$ is a bijection $w:A\to A$ such that $w(i)=i$ for all but finitely many $i\in A$. We write $S_A$ for the set of all permutations of $A$, and let $S_n := S_{\{1,\ldots,n\}}$. For all $i\in\Z$, let $s_i$ be the permutation with $s(i)=i+1, s(i+1)=i$, and for all other $j\in\Z$, $s(j)=j$. The $s_i$'s are called \emph{simple reflections}. They generate $S_\Z$ and satisfy the relations
\begin{subequations}
\label{eq:coxeter-relations}
\begin{equation}
\label{eq:quadratic-relation}
s_i^2 = id, \qquad \text{for all $i\in\Z$},
\end{equation}
\begin{equation}
\label{eq:braid-rel1}
s_is_{i+1}s_i = s_{i+1}s_is_{i+1} \qquad \text{for all $i\in\Z$},
\end{equation}
\begin{equation}
\label{eq:braid-rel2}
s_is_j = s_js_i \qquad \text{for all $i,j\in\Z, |i-j|\ge 2$}.
\end{equation}
\end{subequations}
This is called the \emph{Coxeter presentation} of $S_\Z$. \eqref{eq:quadratic-relation} are called the \emph{quadratic relations}, while \eqref{eq:braid-rel1},\eqref{eq:braid-rel2} are called the \emph{braid relations}.

If $w = s_{w_1}\cdots s_{w_k}$, we say that $\bs{w}:= (w_1,\ldots,w_k) \in \Z^*$ is a \emph{word} for $w$. If $\bs{w}$ has minimal length among all words for $w$, we call $\bs{w}$ a \emph{reduced word} for $w$, and the \emph{(Coxeter) length} of $w$ is $\ell(w) = \ell(\bs{w})$. Let $\RW(w)$ be the set of reduced words for $w$. It is a well-known classical result (Matsumoto's Theorem) that any two reduced words for $w$ are equivalent up to the braid relations, and any word for $w$ that is not reduced can be shortened to a reduced word via applications of the braid and quadratic relations. Let $\RW = \bigcup_{w \in S_{\Z}}$ be the set of reduced words for all permutations.

We write $u\le v$ if there exists a reduced word $\bs{v}$ for $v$ with a subword $\bs{u}$ that is a reduced word for $u$. (It turns out that if $u\le v$, \emph{every} reduced word for $v$ has such a subword.) $\le$ forms a partial order on $S_\Z$ called the \emph{(strong) Bruhat order}.

\begin{remark}
Most sources for permutations concern the finite groups $S_n$. Working in $S_\Z$ is a notational convenience, but for us has a deeper significance, in that back-stabilization is most natural in this context. Still, any particular computation involving permutations may be done in some $S_n$, by shifting if necessary, and so the theory of $S_\Z$ is not very different to the theory of $S_n$ for finite $n$.
\end{remark}

The \emph{one-line notation} of $w\in S_\Z$ is the infinite sequence $\ldots w(-2), w(-1), w(0), w(1), w(2), \ldots$. If $w\in S_n$, then we truncate the sequence on both sides and write $w(1), w(2), \ldots, w(n)$. In examples, $n$ is often a small integer, so we omit the commas as is standard practice. It is possible to interpret such a permutation as a word in $[n]^*$; however, for the most part we do \emph{not} want to do this.

\subsection{Lehmer code}
\begin{definition} \label{def:lehmer-code}
The \emph{Lehmer code} is the doubly-infinite sequence
\[
\text{code}(w) = (\ldots, c_{-1}, c_0, c_1, \ldots),
\]
where
\[
c_i := \text{code}(w)_i = |\{j\in \Z \;|\; j>i, w(j)<w(i)\}|.
\]
All but finitely many entries of the Lehmer code are zero, and $\ell(w) = \sum_i c_i$. $c_i$ is also called the \emph{$i$th row} of the Lehmer code, since it equals the number of boxes in the $i$th row of the Rothe diagram of $w$.
\end{definition}

We have
\[
\theta_i(w) = \begin{cases} 1, & \text{if code}(w)_i > 0,\\ 0, & \text{otherwise}.\end{cases} \qquad \text{and} \qquad \lambda_i(w) = |\{j\le i \;|\; \text{code}(w)_j > 0\}|.
\]

\begin{lemma} \label{lem:desdefs}
    The following are equivalent:
    
    \begin{enumerate}
        \item[(a)] $\ell(w s_i) = \ell(w) - 1$
        \item[(b)] $c_i > c_{i+1}$
        \item[(c)] $\text{code}(w s_i) = (\ldots, c_{i - 2}, c_{i - 1}, c_{i+1}, c_{i} - 1, c_{i+2}, c_{i+3}, \ldots)$
        \item[(d)] There exists a reduced word $(w_1, \ldots, w_k) \in \RW(w)$ with $w_k = i$
        \item[(e)] $w(i) > w(i+1)$
    \end{enumerate}
\end{lemma}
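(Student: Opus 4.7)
The plan is to establish the five equivalences by a short implication cycle built on two ingredients: direct counting from the definition of $\text{code}(w)$, and the standard Coxeter fact that $\ell(ws_i)\in\{\ell(w)-1,\ell(w)+1\}$. My target chain is (e) $\Leftrightarrow$ (b), (e) $\Leftrightarrow$ (c), (c) $\Rightarrow$ (a), (a) $\Rightarrow$ (e), and (a) $\Leftrightarrow$ (d). The implication (a) $\Rightarrow$ (e) will fall out of the case analysis used to prove (e) $\Leftrightarrow$ (c).

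For (e) $\Leftrightarrow$ (b), the idea is to split
\[
c_i = |\{j=i+1 : w(j)<w(i)\}| + |\{j>i+1 : w(j)<w(i)\}|
\]
and compare the tail with $c_{i+1} = |\{j>i+1 : w(j)<w(i+1)\}|$. When $w(i)>w(i+1)$, the $j=i+1$ term contributes $1$ and every index counted by $c_{i+1}$ is also counted in the tail of $c_i$, yielding $c_i\ge c_{i+1}+1$; when $w(i)<w(i+1)$, the $j=i+1$ term vanishes and the inclusion reverses, yielding $c_i\le c_{i+1}$.

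For (e) $\Leftrightarrow$ (c), I would use the fact that $ws_i$ differs from $w$ only at positions $i$ and $i+1$, where the two values are swapped. So for any $j\notin\{i,i+1\}$, the equality $(ws_i)(j)=w(j)$ together with the multiset equality $\{(ws_i)(k):k>j\}=\{w(k):k>j\}$ gives $\text{code}(ws_i)_j=c_j$; only rows $i$ and $i+1$ can change. A direct computation, using $(ws_i)(i)=w(i+1)$ and $(ws_i)(i+1)=w(i)$, treats the two possibilities in turn. When $w(i)>w(i+1)$, one gets $\text{code}(ws_i)_i=c_{i+1}$ and $\text{code}(ws_i)_{i+1}=c_i-1$, matching the formula in (c); summing rows then yields $\ell(ws_i)=\ell(w)-1$, giving (a). When $w(i)<w(i+1)$, one instead gets $\text{code}(ws_i)_i=c_{i+1}+1$ and $\text{code}(ws_i)_{i+1}=c_i$, so $\ell(ws_i)=\ell(w)+1$, delivering the contrapositive (a) $\Rightarrow$ (e).

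Finally, (a) $\Leftrightarrow$ (d) is a routine word manipulation: if (a) holds, append $s_i$ to any reduced word for $ws_i$ to obtain a length-$\ell(w)$ word for $w$ ending in $s_i$, automatically reduced; conversely, if $(w_1,\ldots,w_k)\in\RW(w)$ has $w_k=i$, then $(w_1,\ldots,w_{k-1})$ is a word for $ws_i$ of length $\ell(w)-1$, which combined with the general bound $\ell(ws_i)\ge\ell(w)-1$ coming from $w=(ws_i)s_i$ forces equality. The only delicate step is the direct Lehmer-code computation for (c), where one must separately track the single position $j=i+1$ against the tail $j>i+1$ and carry out both cases of the comparison $w(i)$ vs.\ $w(i+1)$; once that computation is in hand, all five equivalences follow immediately.
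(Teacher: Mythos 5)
Your proof is correct. Note, however, that the paper itself offers no proof of this lemma: it is stated as standard background on descents, with the reader implicitly referred to the cited Coxeter-group literature (Bj\"orner--Brenti), so there is no argument in the paper to compare yours against. Your self-contained verification is sound: the case analysis of $w(i)$ versus $w(i+1)$ correctly tracks the only two rows of the Lehmer code that can change under right multiplication by $s_i$ (giving (e) $\Rightarrow$ (c), and in the opposite case $\ell(ws_i)=\ell(w)+1$, which yields (a) $\Rightarrow$ (e)); the comparison of the tail $\{j>i+1: w(j)<w(i)\}$ with the set counted by $c_{i+1}$ gives (e) $\Leftrightarrow$ (b); summing the code in (c) against the fact $\ell(w)=\sum_j c_j$ gives (c) $\Rightarrow$ (a); and the append/delete argument for (a) $\Leftrightarrow$ (d), using $\ell(ws_i)\ge\ell(w)-1$ from $w=(ws_i)s_i$, closes the cycle so that all five conditions are linked. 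The only thing your write-up buys beyond the citation is self-containedness, which is a reasonable choice since the paper's later arguments (e.g.\ Proposition~\ref{prop:reducedIncreasingSequences}) lean on parts (b), (c), (d) explicitly.
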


If any of the above are true, then $w$ is said to have a \textit{descent} at position $i$. This is sometimes called a \emph{right descent} since multiplication by $s_i$ happens on the right.

Recall the back-stabilization number $\BS(w)$ for $w$ is the smallest nonnegative integer $k$ such that $\gamma^k(w)\in S_{\Z_+}$ 

\begin{proposition}
Suppose $w\notin S_{\Z_+}$, and let $\bs{w}\in\RW(w)$. Then,
\begin{equation} \label{eq:BS-def}
\BS(w) = 1- \min(\supp \bs{w}) = 1 - \min(\{i \;|\; s_i \leq w\}) =  1- \min(\{j \;|\; \theta_j(w) = 1\}).
\end{equation}
\end{proposition}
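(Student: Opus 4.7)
The plan is to introduce a single integer $m := \min\{i \in \Z \mid w(i) \ne i\}$ (which is well-defined since $w$ moves only finitely many integers, and is at most $0$ because $w \notin S_{\Z_+}$), and show that each of the four quantities in \eqref{eq:BS-def} equals $1-m$. First I would unpack the definition of $\gamma$: iterating gives $\gamma^k(w)(i) = w(i-k) + k$, so $\gamma^k(w)$ fixes $i$ iff $w(i-k) = i-k$. Thus $\gamma^k(w) \in S_{\Z_+}$ is equivalent to $w$ fixing every integer $\le -k$, which happens iff $-k < m$, i.e., $k \ge 1-m$. Since we need the smallest nonnegative such $k$, this gives $BS(w) = 1-m$ immediately.

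The heart of the argument is the identification $\min \supp \bs{w} = m$. For this I would use the standard parabolic subgroup $W_{\ge m} := \langle s_i : i \ge m \rangle \subseteq S_\Z$, which coincides with the pointwise stabilizer of $\{j \in \Z : j < m\}$ (each generator $s_i$ with $i \ge m$ fixes everything $< m$, and conversely any permutation fixing $\{j < m\}$ restricts to a permutation of $\{m, m+1, \ldots\}$ expressible in $s_m, s_{m+1}, \ldots$). Since $w$ fixes every index $< m$, we have $w \in W_{\ge m}$, and because $W_{\ge m}$ is a standard parabolic subgroup its Coxeter length agrees with that of $S_\Z$; hence $w$ admits a reduced word entirely supported in $\{m, m+1, \ldots\}$. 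By Matsumoto's theorem, the braid relations preserve the set of letters used, so \emph{every} reduced word has the same support, forcing $\min \supp \bs{w} \ge m$. For the reverse inequality, if $\min \supp \bs{w} \ge m+1$, the same parabolic argument applied to $W_{\ge m+1}$ would force $w$ to fix $m$, contradicting the choice of $m$.

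For the remaining two equalities, the identity $\supp \bs{w} = \{i : s_i \le w\}$ follows directly from Matsumoto: a simple reflection $s_i$ lies below $w$ in Bruhat order iff it appears in some reduced word for $w$, which by the previous observation is iff it appears in every reduced word. For the final equality, I would check $\theta_j(w)$ directly. When $j < m$, we have $w(j) = j$, and any $k > j$ satisfies $w(k) = k > j$ if $k < m$, while $w(k) \in \{m, m+1, \ldots\}$ if $k \ge m$ (because $w$ restricts to a bijection on $\{m, m+1, \ldots\}$); in either case $w(k) > j = w(j)$, so $\theta_j(w) = 0$. At $j = m$, we have $w(m) > m$ since $w(m) \ne m$ and $w(m) \in \{m, m+1, \ldots\}$. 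Because $w$ permutes $\{m, m+1, \ldots\}$ and $m$ is in this set, some $k \ge m$ must satisfy $w(k) = m$; necessarily $k > m$, and then $w(k) = m < w(m)$, giving $\theta_m(w) = 1$. Thus $\min\{j : \theta_j(w) = 1\} = m$, completing the chain of equalities.

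No step is a serious obstacle; the only subtlety is the parabolic subgroup argument in the second paragraph, which hinges on the fact that length in the subgroup $W_{\ge m}$ coincides with length in $S_\Z$ — a standard Coxeter-theoretic fact but the one that really makes the proof go through.
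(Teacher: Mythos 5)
Your proof is correct and follows essentially the same route as the paper: identify the minimal non-fixed point ($m$, which is $1-BS(w)$), show every reduced word has support bounded below by it, use that $s_i\le w$ iff $i$ appears in a (equivalently every) reduced word, and verify $\theta_j(w)$ directly from the fact that $w$ permutes $\{m,m+1,\ldots\}$. The only difference is that you justify the support claim carefully via the standard parabolic subgroup $\langle s_i : i\ge m\rangle$ and Matsumoto's theorem, where the paper simply asserts it; that extra care is fine but not a genuinely different argument.
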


\begin{proof}
Let $k = \BS(w)$. Then $w$ fixes $\cdots, -k-1, -k$, but not $1-k$, so $\supp\bs{w}$ cannot contain $\cdots, -k-1, -k$, but must contain $1-k$, and the first equality holds. We also have the second equality since $s_i\leq w$ if and only if it appears in a (equivalently, every) reduced word for $w$.

Finally, $w$ fixes $\cdots, -k-1, -k$, but not $1-k$, so $\theta_{1-k}(w)=1$ since if we let $i=1-k$ and $j = w^{-1}(1-k)$, then $w(i) > i = w(j)$ and $j > i$. Then for any $i \leq -k$, we have $\theta_i(w)=0$ because for any $j > i$, $w(i)=i<\min(j,1-k)\le w(j)$. So the third equality holds.
\end{proof}

\section{Colored words and slide polynomials} \label{sec:word-polys}

\subsection{Colored words and the colored shuffle product}
Let $\Zcol$ be the alphabet with letters $i^{[j]}$ with $i \in \Z$, $j \in \Z_+$. We refer to $\Zcol$ as the set of \textit{colored integers}, and $\Zcol^*$ as the set of \emph{colored words}.

Let the \textit{value} of $i^{[j]}$ be $\val(i^{[j]}) = i$, and let the \textit{color} of $i^{[j]}$ be $\col{i^{[j]}} = j$. $\Zcol$ has a total ordering, where we first compare values, then colors:
\[
\cdots < i^{[1]} < i^{[2]} < i^{[3]} < \cdots < (i+1)^{[1]} < (i+1)^{[2]} < (i+1)^{[3]} < \cdots
\]

For $\bs{p} = (p_1, \ldots, p_k) \in \Zcol^k$, let $\val(\bs{p}) = (\val(p_1), \ldots, \val(p_k)) \in \Z^k$ and let $\col{\bs{p}} = (\col{p_1}, \ldots, \col{p_k}) \in \Z_+^k$. Let $\maxcol{\bs{p}}$ be $\max(\col{\bs{p}})$. We let the \emph{color shift operator} $\uparrow$ be defined so that for $a \in \Z$,  $\bs{p} \uparrow a$ is the word such that $\val(\bs{p} \uparrow a) = \val(\bs{p})$ and $\col{\bs{p} \uparrow a} = (\col{p_1} + a, \ldots, \col{p_k} + a)$. We identify $\Z$ with the subset of $\Zcol$ of elements with color $1$, meaning $i := i^{[1]}$ for all $i$.

We have the following partial ordering on $\Zcol^k$:
\begin{equation} \label{eq:partial-ordering}
\bs{p} \le \bs{q} \iff p_j\le q_j \text{ for all } j,
\end{equation}
and on $\Z^k$ this is just the usual partial ordering.

For $\bs{p} = (p_1, p_2, \ldots, p_k) \in \Zcol^*$, if $p_i<p_{i+1}$, we say that $\bs{p}$ has an \emph{increase} at $i$, the \emph{increase set} of $\bs{p}$ is the set of increases of $\bs{p}$. (We use this terminology instead of the more standard ``ascent'' to avoid confusion with ascents of a permutation). We say that $\bs{p}$ is \emph{weakly-increasing} if $p_1\le p_2\le\cdots\le p_k$ and \emph{strictly-increasing} if $p_1<p_2<\cdots<p_k$. These notions are well-defined because different elements of the same equivalence class have the same relative orders. We define weakly and strictly increasing elements of $\Z^*$ similarly. Let $\ZWInc^k$ be the set of all weakly-increasing elements of $\Z^*$ of length $k$, and $\ZWInc^* = \bigcup_{k\ge 0} \ZWInc^k$.

Next, we define a generalization of the shuffle product, applied to colored words, which we call the \emph{colored shuffle product}. Similar constructions have been used by Assaf and Searles \cite{AssafSearles}, and by Nadeau and Tewari \cite{NadeauTewari}. For $\bs{p}, \bs{q} \in \Zcol^*$, let
\[
\bs{p}\bs{q} := \bs{p} \shuffle \bs{q} \uparrow \maxcol{\bs{p}},
\]
where $\uparrow$ associates more strongly than $\shuffle$. The color shift ensures that in every shuffled word appearing in $\bs{p}\bs{q}$, the colors of entries of $\bs{p}$ and the colors of entries of $\bs{q}$ are disjoint. If $\bs{p}$ and $\bs{q}$ are monocolored words, the product $\bs{p}\bs{q}$ will involve exactly two colors. Call the colored words appearing in this product \emph{colored shuffled words}. Note in particular that the colored shuffle product is positive--all the coefficient are either 0 or 1.

\begin{example}
    If $\bs{p}$ and $\bs{q}$ are monocolored, the product $\bs{p}\bs{q}$ will involve exactly two colors. For example, if $\bs{p} = 12, \bs{q} = 2$, then 
    \[
    \bs{p}\bs{q} = 1^{[1]}2^{[1]}2^{[2]} + 1^{[1]}2^{[2]}2^{[1]} +
    2^{[2]}1^{[1]}2^{[1]}.
    \]
\end{example}

\begin{proposition}
    Multiplication in the colored shuffle algebra is associative.
\end{proposition}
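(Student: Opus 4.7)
The plan is to deduce associativity of the colored shuffle product from the classical associativity of the Eilenberg--MacLane shuffle product $\shuffle$, using the color-shift operator $\uparrow$ as a bridge. Two elementary lemmas will do all the work. First, $\uparrow$ is compatible with $\shuffle$: for any colored words $\bs{a},\bs{b}$ and any $a\in\Z$, one has $(\bs{a}\shuffle\bs{b})\uparrow a = (\bs{a}\uparrow a)\shuffle(\bs{b}\uparrow a)$. This follows immediately from the recursive definition of $\shuffle$, since $\uparrow a$ touches only colors and commutes with concatenation. Second, $\maxcol{\cdot}$ is additive on the colored shuffle product, in the sense that every colored word appearing in the expansion of $\bs{p}\bs{q}$ has maximum color exactly $\maxcol{\bs{p}}+\maxcol{\bs{q}}$: in $\bs{p}\shuffle(\bs{q}\uparrow\maxcol{\bs{p}})$ the $\bs{p}$-entries carry colors in $\{1,\ldots,\maxcol{\bs{p}}\}$ while the shifted $\bs{q}$-entries carry colors in $\{\maxcol{\bs{p}}+1,\ldots,\maxcol{\bs{p}}+\maxcol{\bs{q}}\}$, so the maximum color is constant across terms.

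Given these two facts, I would extend the colored shuffle product linearly to $\Q[\Zcol^*]$ and expand both $(\bs{p}\bs{q})\bs{r}$ and $\bs{p}(\bs{q}\bs{r})$ on basis elements $\bs{p},\bs{q},\bs{r}$. By additivity of $\maxcol$, the outer shift in $(\bs{p}\bs{q})\bs{r}$ is the \emph{constant} $\maxcol{\bs{p}}+\maxcol{\bs{q}}$ across every term of $\bs{p}\bs{q}$, so I can pull it outside the sum to get
\[
(\bs{p}\bs{q})\bs{r} = \bigl(\bs{p}\shuffle(\bs{q}\uparrow\maxcol{\bs{p}})\bigr)\shuffle\bigl(\bs{r}\uparrow(\maxcol{\bs{p}}+\maxcol{\bs{q}})\bigr).
\]
For the other side, compatibility of $\uparrow$ with $\shuffle$ lets me push $\uparrow\maxcol{\bs{p}}$ inside the inner shuffle,
\[
\bs{p}(\bs{q}\bs{r}) = \bs{p}\shuffle\bigl(\bs{q}\shuffle(\bs{r}\uparrow\maxcol{\bs{q}})\bigr)\uparrow\maxcol{\bs{p}} = \bs{p}\shuffle\bigl((\bs{q}\uparrow\maxcol{\bs{p}})\shuffle(\bs{r}\uparrow(\maxcol{\bs{p}}+\maxcol{\bs{q}}))\bigr),
\]
using also $(\cdot\uparrow a)\uparrow b = \cdot\uparrow(a+b)$. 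Classical associativity of $\shuffle$ then identifies the two right-hand sides.

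The only real subtlety is the additivity claim for $\maxcol$: without it, the outer color shift $\uparrow\maxcol{\bs{p}\bs{q}}$ appearing in $(\bs{p}\bs{q})\bs{r}$ would a priori depend on the particular colored shuffled word in $\bs{p}\bs{q}$, and the manipulation above could not be performed by a single scalar color shift. Once one records that the maximum color is uniform across all terms of $\bs{p}\bs{q}$, the proof collapses to the classical case; empty-word edge cases are handled by the convention $\maxcol{\emptyset}=0$, which preserves additivity.
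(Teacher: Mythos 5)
Your proof is correct and follows essentially the same route as the paper: reduce to the classical associativity of $\shuffle$ via the identities $(\bs{a}\shuffle\bs{b})\uparrow a = (\bs{a}\uparrow a)\shuffle(\bs{b}\uparrow a)$ and $(\cdot\uparrow a)\uparrow b = \cdot\uparrow(a+b)$. Your explicit verification that every term of $\bs{p}\bs{q}$ has maximum color $\maxcol{\bs{p}}+\maxcol{\bs{q}}$ simply makes precise a step the paper's computation uses implicitly when it writes the outer shift as a single constant, so it is a welcome but not essentially different addition.
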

\begin{proof}
    \begin{align*}
        \bs{p}(\bs{q}\bs{r}) &= \bs{p} \shuffle (\bs{q} \shuffle \bs{r} \uparrow \maxcol{\bs{q})} \uparrow \maxcol{\bs{p}} \\&= (\bs{p} \shuffle \bs{q} \uparrow \maxcol{\bs{p})} \shuffle \bs{r} \uparrow (\maxcol{\bs{q}} + \maxcol{\bs{p})} \\&= (\bs{p}\bs{q})\bs{r},
    \end{align*}
    where we have used the associativity of the shuffle product, along with the fact that $(\bs{p} \shuffle \bs{q}) \uparrow a = \bs{p} \uparrow a \shuffle \bs{q} \uparrow a$.
\end{proof}

On the other hand, the colored shuffle product is not commutative; for example, $1^{[1]} \cdot 2^{[1]} = 1^{[1]}2^{[2]} + 2^{[2]}1^{[1]} \neq 1^{[2]}2^{[1]} + 2^{[1]}1^{[2]} = 2^{[1]} \cdot 1^{[1]}$.

We end this subsection with the definition of our main algebraic object:
\begin{definition}
The \textit{colored shuffle algebra} $\QAlg := \Q[\Zcol^*]$ is the $\Q$-algebra spanned by colored words, with multiplication given by the colored shuffle product extended linearly.
\end{definition}

Elements of $\QAlg$ are finite linear combinations of colored words. We will often use the notation $\bsvec{a}$ for an arbitrary element of $\QAlg$, to distinguish from the case of a single word.

\subsection{Compatible sequences}

Next, we define compatible sequences. These were first studied by Billey, Jockusch, and Stanley \cite{BilleyJockuschStanley}, working with (uncolored) reduced words of permutations. We note, however, that the usual definitions make sense in the context of arbitrary colored words, and we work in that generality. In the case of uncolored reduced words, these definitions match \cite{BilleyJockuschStanley}.

\begin{definition} \label{def:compatible-sequences}
A compatible sequence is a two-row array of the form:

\[\bs{\alpha} := \begin{pmatrix} \bs{t} \\ \bs{b} \end{pmatrix} =  \begin{pmatrix} t_1 & t_2 & \cdots & t_k \\ b_1 & b_2 & \cdots & b_k \end{pmatrix}, \qquad \bs{t} \in \Zcol^k,\bs{b}\in \Z^k,\] such that

\begin{equation} \label{eq:comp-seq}
\bs{b}\in \ZWInc^k, \qquad\qquad \bs{b}\le \bs{t}, \qquad\qquad
 t_j < t_{j+1} \implies b_j < b_{j+1}.
\end{equation}
\end{definition}

We call $\bs{t}$ the \emph{top row} of $\alpha$, and $\bs{b}$ the \emph{bottom row}. For notational convenience, we may also write $\bs{\alpha} = (\bs{t}, \bs{b})$. We call $\bs{\alpha}$ \emph{positive} if all of its entries are positive. 

For $\bs{t}\in \Zcol^*$, let $\mc{C}(\bs{t})$ be the set of compatible sequences with top row $\bs{t}$ and let $\BR(\bs{t})\subseteq \ZWInc^k$ be the set of bottom rows of elements of $\mc{C}(\bs{t})$.

The set $\BR(\bs{t})$ has a nice structure. Define $m(\bs{t})\in\ZWInc^k$ by the following recursive formula: \begin{equation} \label{eq:max-bot-row} 
m(\bs{t})_k = \val(t_k), \qquad m(\bs{t})_j = \begin{cases} m(\bs{t})_{j+1}, & \text{if } t_j \ge t_{j+1}; \\ \min(\val(t_j), m(\bs{t})_{j+1} - 1), & \text{if } t_j < t_{j+1}.\end{cases}\qquad \text{for all $j<k$}.
\end{equation}

It is always the case that $m(\bs{t})\le \bs{t}$. If $\bs{t}$ is weakly increasing, then $m(\bs{t})=\val(\bs{t})$, so the map $m(\rule{6pt}{.5pt}):\Zcol^*\to\ZWInc^*$ is surjective.

\begin{lemma} \label{lem:possible-bottom-rows}
$m(\bs{t})$ is a maximal element of $\BR(\bs{t})$ under the partial ordering on $\Z^k$. Furthermore,
\[\BR(\bs{t}) = \BR(m(\bs{t})) = \{\bs{b}\in \ZWInc^k \;|\; \bs{b}\le m(\bs{t}) \text{ and for all $j$, $b_j < b_{j+1}$ whenever $t_j < t_{j+1}$} \}.\]
In particular, $\BR(\bs{t})$ only depends on $m(\bs{t})$.
\end{lemma}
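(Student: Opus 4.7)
The plan is to establish the three assertions in turn: (1) $m(\bs{t})\in\BR(\bs{t})$ and is in fact the unique maximum; (2) the explicit description of $\BR(\bs{t})$ in terms of $m(\bs{t})$; (3) $\BR(\bs{t})=\BR(m(\bs{t}))$. Throughout, I will use the unpacking that $\bs{b}\in\BR(\bs{t})$ iff $\bs{b}$ is weakly increasing, $b_j\le\val(t_j)$ for every $j$, and $b_j<b_{j+1}$ whenever $t_j<t_{j+1}$ (this is just the translation of \eqref{eq:comp-seq} using the fact that $\bs{b}$ has color $1$).

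First I will verify $m(\bs{t})\in\BR(\bs{t})$ directly from the recursion \eqref{eq:max-bot-row}. Weak increase of $m(\bs{t})$: in the case $t_j\ge t_{j+1}$ we have $m(\bs{t})_j=m(\bs{t})_{j+1}$, and in the case $t_j<t_{j+1}$ we have $m(\bs{t})_j\le m(\bs{t})_{j+1}-1$. The bound $m(\bs{t})_j\le\val(t_j)$ is shown by downward induction on $j$: the base $j=k$ is the definition, and the inductive step splits into the same two cases, the first using $\val(t_j)\ge\val(t_{j+1})$ (which follows from $t_j\ge t_{j+1}$ and the lexicographic order on $\Zcol$), the second being immediate from the $\min$. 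Strict increase at positions where $t_j<t_{j+1}$ was already observed.

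Next I will prove that $m(\bs{t})$ is in fact the componentwise maximum of $\BR(\bs{t})$ by a parallel downward induction. Given $\bs{b}\in\BR(\bs{t})$, assume $b_{j+1}\le m(\bs{t})_{j+1}$. If $t_j\ge t_{j+1}$, then weak increase of $\bs{b}$ yields $b_j\le b_{j+1}\le m(\bs{t})_{j+1}=m(\bs{t})_j$. If $t_j<t_{j+1}$, then the defining constraints of $\BR(\bs{t})$ give $b_j\le\val(t_j)$ and $b_j\le b_{j+1}-1\le m(\bs{t})_{j+1}-1$, so $b_j\le m(\bs{t})_j$. This both shows maximality and gives the forward containment in the explicit description of $\BR(\bs{t})$. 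The reverse containment is immediate: any $\bs{b}\in\ZWInc^k$ with $\bs{b}\le m(\bs{t})$ automatically satisfies $b_j\le m(\bs{t})_j\le\val(t_j)$, and the strict-increase clause is assumed.

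Finally, I will show $\BR(\bs{t})=\BR(m(\bs{t}))$ by proving that the increase sets of $\bs{t}$ and $m(\bs{t})$ coincide: the recursion shows $m(\bs{t})_j=m(\bs{t})_{j+1}$ exactly when $t_j\ge t_{j+1}$, and $m(\bs{t})_j<m(\bs{t})_{j+1}$ exactly when $t_j<t_{j+1}$. Since $m(m(\bs{t}))=m(\bs{t})$ (as $m(\bs{t})$ is already weakly increasing), the explicit description proved in the previous step applies with $\bs{t}$ replaced by $m(\bs{t})$, producing the same set. In particular $\BR(\bs{t})$ depends only on $m(\bs{t})$. No step here is a genuine obstacle; the only care required is bookkeeping in the downward induction and in reading off the increase structure of $m(\bs{t})$ from its piecewise recursion.
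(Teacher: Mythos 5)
Your proposal is correct and follows essentially the same route as the paper: the same downward induction (splitting on $t_j\ge t_{j+1}$ versus $t_j<t_{j+1}$) to show every bottom row is componentwise $\le m(\bs{t})$, plus the observation that $\bs{t}$ and $m(\bs{t})$ have identical increase sets. You merely spell out two steps the paper leaves as "an easy check" (that $m(\bs{t})$ satisfies \eqref{eq:comp-seq}, and that $m(m(\bs{t}))=m(\bs{t})$ gives $\BR(\bs{t})=\BR(m(\bs{t}))$), which is fine.
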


\begin{proof}
It is an easy check that $m(\bs{t})$ satisfies \eqref{eq:comp-seq}, and so it is indeed a bottom row for $\bs{t}$. Let $\bs{b}\in\ZWInc^k$ satisfy $t_j<t_{j+1}\implies b_j<b_{j+1}$. If $\bs{b}\le \bs{t}$, we need to show that $\bs{b}\le m(\bs{t})$ i.e. that $b_j\le m(\bs{t})_j$ for all $j$. If $j=k$, this follows from the definition, so assume $j<k$ and $b_{j+1}\le m(\bs{t})_{j+1}$. Then either $t_j\ge t_{j+1}$, in which case $b_j\le b_{j+1}\le m(\bs{t})_{j+1} = m(\bs{t})_j$, or $t_j<t_{j+1}$, in which case $b_j \le b_{j+1}-1 \le m(\bs{t})_{j+1}-1$ and $b_j\le t_j$, so $b_j\le \min(\val(t_j), m(\bs{t})_{j+1} - 1) = m(\bs{t})_j$.

By \eqref{eq:max-bot-row}, $\bs{t}$ and $m(\bs{t})$ have increases at exactly the same indices, so \[\BR(\bs{t}) = \{\bs{b}\in \ZWInc^k \;|\; \bs{b}\le m(\bs{t}) \text{ and for all $j$, $b_j < b_{j+1}$ whenever $m(\bs{t})_j < m(\bs{t})_{j+1}$} \}.\qedhere\]
\end{proof}

\begin{remark}
It is possible that $\BR(\bs{t}) = \BR(\bs{t'})$ when $\bs{t}\ne \bs{t'}$. This can happen even if $\bs{t}$ and $\bs{t'}$ are reduced words for the same permutation. For instance, $s_6s_8s_4s_1 = s_4s_8s_6s_1 \ne s_4s_9s_6s_1\in S_\Z$ and $\BR(6^{[1]}8^{[1]}4^{[1]}1^{[1]}) = \BR(4^{[1]}8^{[1]}6^{[1]}1^{[1]}) = \BR(4^{[1]}9^{[1]}6^{[1]}1^{[1]})$: all three words have the same maximal element $0111$.
\end{remark}

We now define \emph{bottom-row-equivalence}. For $\bs{p}, \bs{q} \in \Zcol^*$, we say that
\begin{equation}
    \bs{p} \equiv \bs{q} \text{ if and only if } m(\bs{p}) = m(\bs{q}).
\end{equation}

We record the following consequence of the proof of Lemma \ref{lem:possible-bottom-rows}

\begin{lemma} \label{lem:max-bottom-inc}
    For any word $\bs{p}$ and any $j < k$, $m(\bs{p})_j < m(\bs{p})_{j+1}$ if and only if $\bs{p}_j < \bs{p}_{j+1}$.
\end{lemma}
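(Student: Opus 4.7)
The plan is to extract both implications directly from the recursive definition \eqref{eq:max-bot-row} of $m(\bs{p})$; indeed, this lemma is essentially a clean restatement of the observation made in passing inside the proof of Lemma~\ref{lem:possible-bottom-rows} (``By \eqref{eq:max-bot-row}, $\bs{t}$ and $m(\bs{t})$ have increases at exactly the same indices''), so no induction or additional machinery is needed — the claim reduces to a two-case inspection at the index $j$.

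For the forward direction (contrapositive), I would suppose $\bs{p}_j \ge \bs{p}_{j+1}$. Then the second clause of \eqref{eq:max-bot-row} applies and gives $m(\bs{p})_j = m(\bs{p})_{j+1}$ directly, so in particular $m(\bs{p})_j \not< m(\bs{p})_{j+1}$, as required.

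For the reverse direction, I would suppose $\bs{p}_j < \bs{p}_{j+1}$. Then the third clause of \eqref{eq:max-bot-row} applies, yielding
\[
m(\bs{p})_j \;=\; \min\bigl(\val(p_j),\; m(\bs{p})_{j+1} - 1\bigr) \;\le\; m(\bs{p})_{j+1} - 1 \;<\; m(\bs{p})_{j+1}.
\]
Combining the two directions completes the proof. The only potential subtlety — and the only thing to watch — is remembering that the comparison $\bs{p}_j < \bs{p}_{j+1}$ in $\Zcol^*$ is the total order on colored integers (first by value, then by color), not a comparison of values alone; but this is precisely the notion of ``increase'' used in \eqref{eq:max-bot-row}, so there is nothing to reconcile. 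There is no real main obstacle here; the lemma is essentially definitional.
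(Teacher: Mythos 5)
Your proof is correct and follows essentially the same route as the paper, which records this lemma as an immediate consequence of the observation in the proof of Lemma~\ref{lem:possible-bottom-rows} that, by \eqref{eq:max-bot-row}, $\bs{p}$ and $m(\bs{p})$ have increases at exactly the same indices. Your two-case inspection of the recursion (including the remark about the total order on $\Zcol$) is exactly that argument made explicit.
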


\subsection{Back-symmetric and back-quasisymmetric functions}

Let $\bs{x} = (\ldots,x_{-2},x_{-1},x_0,x_1,x_2,\ldots)$ be a doubly-infinite sequence of variables. For $A\subseteq\Z$, let $\bs{x}_A = \{x_i\;|\;i\in A\}$ and consider the ring \[\Poly_A := \Q[\bs{x}_A]\] of polynomials in $\bs{x}_A$ with rational coefficients. This is a graded ring: $\Poly_A = \bigoplus_{k\ge 0} \Poly^k_A$, where $\Poly^k_A$ is the set of degree-$k$ homogeneous polynomials in $\Poly_A$.

If $\bs{p}\in \Z^k$ has $\supp\bs{p}\subseteq A$, we have the monomial
\begin{equation} \label{eq:monomial-defn}
\bs{x}^{\bs{p}} := x_{\bs{p}_1}x_{\bs{p}_2}\cdots x_{\bs{p}_k}\in \Poly^k_A,
\end{equation}
and the set $\{\bs{x}^{\bs{p}} \;|\; \bs{p}\in\ZWInc^k, \supp\bs{p}\subseteq A\}$ is linearly independent, and forms a basis for $\Poly^k_A$ when $A$ is finite. Write $\supp \bs{x}^{\bs{p}} = \supp\bs{p}$, and let $\supp f$ be the union of the supports of its monomials.

\begin{remark}
    Note that \eqref{eq:monomial-defn} is slightly nonstandard notation; usually one has a composition $\beta = (\beta_1,\beta_2,\ldots)$ and writes $\bs{x}^{\bs{\beta}} = x_1^{\beta_1} x_2^{\beta_2}\cdots$. To translate to this setting, simply let $\beta_i$ be the number of entries of $\bs{p}$ which equal $i$.)
\end{remark}

If $B\subseteq A$, let $\rho_{A,B}:\Poly_A\to \Poly_B$ be the map that sends $x_i, i\in B$ to itself and all other variables to 0. $\rho_{A,B}$ is a map of graded rings, and acts on monomials via \[\bs{x}^{\bs{p}}\mapsto \begin{cases} \bs{x}^{\bs{p}}, & \text{if }\supp \bs{p}\subseteq B, \\ 0, & \text{otherwise}.\end{cases}\]

$f\in\Poly_A$ is \emph{symmetric} if it is invariant under the action of $S_A$ permuting the variables in $\bs{x}_A$, and \emph{quasisymmetric} if the coefficients for $\bs{x}^{\bs{p}}$ and $\bs{x}^{\bs{q}}$ are equal for any $\bs{p},\bs{q}\in\ZWInc^k$ which have the same increase sets. For finite $A$, let $\Lambda_A\subseteq \Poly_A$ be the ring of symmetric polynomials on $A$, and let $\QSym_A\subseteq \Poly_A$ be the ring of quasisymmetric polynomials. These rings are also graded by degree:
\[\Lambda_A = \bigoplus_{k\ge 0} \Lambda_A^k, \qquad\qquad \QSym_A = \bigoplus_{k\ge 0} \QSym_A^k.\]
The (quasi)symmetry does allow us to fruitfully take limits: if $A$ is infinite, let
\[\Lambda_A = \varprojlim \Lambda_B, \qquad\qquad \QSym_A = \varprojlim \QSym_B,\]
where in both cases the projective limit is taken over all finite subsets $B\subseteq A$ relative to the maps $\rho_{A,B}$. These are the rings of \emph{(quasi)symmetric functions} \cite{Macdonald-book, Gessel-quasisymmetric}.

\medskip

Next, we move to the back-stable setting. One must be careful since the set of all power series, even with restrictions on degree, is not an inverse limit of polynomial rings. Fortunately, the relevant functions lie in a much smaller ring. Let $R$ be the ring of formal power series $f$ in $\bs{x}$ which have bounded total degree and support bounded above. The former condition means that there exists an $M$ such that all monomials in $f$ have total degree $\le M$, and the latter condition means that there exists an $N$ such that $\supp f\subseteq (-\infty,N]$. $R$ is also a graded ring: $R = \bigoplus_{k\ge 0} R^k$, where $R^k$ is the subring of $R$ consisting of elements which are homogeneous of degree $k$.

For $b\in\Z$, a function $f\in R$ is \emph{back $b$-(quasi)symmetric} if under any specialization $x_j\mapsto z_j\in\Q, j>b$, the resulting function is (quasi)symmetric. If $f$ is back $b$-(quasi)symmetric, then it is back $b'$-(quasi)symmetric for any $b'<b$. We also use the terminology that a back $b$-(quasi)symmetric function is (quasi)symmetric in $\bs{x}_{(-\infty,b]}$.

$f$ is \emph{back-(quasi)symmetric} if it is back $b$-(quasi)symmetric for any $b\in\Z$. Let $\ol{R}$ be the ring of back-symmetric functions and $\ol{Q}$ be the ring of back-quasisymmetric functions.

Given subrings $R',R''$ of $R$, let $R'\otimes R''$ be the subring of $R$ generated by products $r'r''$, $r'\in R', r''\in R''$. The set of back $b$-symmetric functions form the ring $\Lambda_{(-\infty,b)} \otimes \Poly_{[b,\infty)}$ and the set of back $b$-quasisymmetric functions form the ring $\QSym_{(-\infty,b)} \otimes \Poly_{[b,\infty)}$. Explicitly this means the following. Any $\bs{p}\in\ZWInc^*$ can be uniquely written $\bs{p} = \bs{q}\circ\bs{r}$, where $\supp\bs{q}\subseteq (-\infty,b]$ and $\supp\bs{r}\subseteq (b,\infty)$. $f$ is back $b$-quasisymmetric if and only if the coefficient of $\bs{x}^{\bs{p}}$ in $f$ equals the coefficient of $\bs{x}^{\bs{q'}\circ\bs{r}}$, where $\bs{q'}$ is any word in $\ZWInc^*$ with the same length and increase set as $\bs{q}$ such that $\supp\bs{q'}\subseteq (-\infty,b]$. Back $b$-symmetry has a similar definition, where the condition on increase set is replaced by the condition that there exists $w\in S_\Z$ that sends $\bs{x}^{\bs{q}}$ to $\bs{x}^{\bs{q'}}$. Equivalently, $f$ is back $b$-symmetric if and only if $s_if = f$ for all $i<b$.

Taking the union over all $b\in\Z$, we have
\begin{equation*}
\ol{R} = \bigcup_{b\in\Z} \Lambda_{(-\infty,b]} \otimes \Poly_{(b,\infty)} \qquad \text{and} \qquad \ol{Q} = \bigcup_{b\in\Z}\QSym_{(-\infty,b]} \otimes \Poly_{(b,\infty)}.
\end{equation*}
In particular, since $\Lambda_{(-\infty,b]}\subseteq \QSym_{(-\infty,b]}$, $\ol{R}\subseteq\ol{Q}$.

$\ol{R}$ and $\ol{Q}$ can also be expressed in a more straightforward way:

\begin{proposition}[{\!\!\cite{LamLeeShimiozono-schubert, NadeauTewari}}] \label{prop:back-quasi}
For any $b\in\Z$
\[\ol{R} = \Lambda_{(-\infty,b]} \otimes \Poly_\Z \qquad \text{and} \qquad \ol{Q} = \QSym_{(-\infty,b]} \otimes \Poly_\Z.\]
\end{proposition}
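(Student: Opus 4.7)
I would fix $b\in\Z$ and establish both inclusions in each equality directly; the quasisymmetric argument mirrors the symmetric one with the substitution of a different basis, so I focus on $\ol R$ and indicate the modification at the end. For the inclusion $\Lambda_{(-\infty,b]}\otimes \Poly_\Z\subseteq \ol R$, I first observe that $\Lambda_{(-\infty,b]}\subseteq \ol R$ tautologically, since its elements are back $b$-symmetric by definition. Moreover $\Poly_\Z\subseteq \ol R$: any polynomial $g$ involves only finitely many variables, and taking $c$ strictly less than all of their indices makes $g$ back $c$-symmetric, because the defining substitutions collapse $g$ to a constant. Since $\ol R$ is closed under sums and products --- the sum or product of a back $c_1$-symmetric and a back $c_2$-symmetric function is back $\min(c_1,c_2)$-symmetric --- the inclusion follows. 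The identical argument gives $\QSym_{(-\infty,b]}\otimes \Poly_\Z\subseteq \ol Q$.

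For the reverse inclusion, given $f\in \ol R$, the union formula immediately preceding the proposition provides $b'\in\Z$ with $f\in \Lambda_{(-\infty,b']}\otimes \Poly_{(b',\infty)}$. Since $\Poly_{(b',\infty)}\subseteq \Poly_\Z$, it suffices to show $\Lambda_{(-\infty,b']}\subseteq \Lambda_{(-\infty,b]}\otimes \Poly_\Z$ for arbitrary $b,b'\in\Z$. Over $\Q$, the ring $\Lambda_{(-\infty,b']}$ is freely generated as a polynomial algebra by the power sums $p_k(\bs{x}_{\leq b'})=\sum_{i\leq b'} x_i^k$ for $k\geq 1$, and these generators satisfy the elementary identity
\[
p_k(\bs{x}_{\leq b'}) = p_k(\bs{x}_{\leq b}) + \epsilon\sum_{i\in I} x_i^k,
\]
where $I = (\min(b,b'),\max(b,b')]$ and $\epsilon = +1$ if $b'\geq b$ and $-1$ otherwise. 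The right-hand side is a sum of an element of $\Lambda_{(-\infty,b]}$ and an element of $\Poly_\Z$, hence lies in $\Lambda_{(-\infty,b]}\otimes \Poly_\Z$. Since this subring contains every $\Q$-algebra generator of $\Lambda_{(-\infty,b']}$, it contains the entire ring, completing the proof for $\ol R$.

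For $\ol Q$, the proof uses the same template with the monomial quasisymmetric functions $M_\alpha(\bs{x}_{\leq b'})$ in the role of the power sums. The analogous identity, obtained by splitting the defining sum over strictly increasing index sequences at the threshold $\min(b,b')$, reads
\[
M_\alpha(\bs{x}_{\leq \max(b,b')}) = \sum_{j=0}^{\ell(\alpha)} M_{\alpha_{[1,j]}}(\bs{x}_{\leq \min(b,b')}) \cdot M_{\alpha_{[j+1,\ell(\alpha)]}}(\bs{x}_{(\min(b,b'),\max(b,b')]}),
\]
where $\alpha_{[a,c]} := (\alpha_a,\ldots,\alpha_c)$. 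When $b'\geq b$ this directly exhibits $M_\alpha(\bs{x}_{\leq b'})$ as a sum of products of an element of $\QSym_{(-\infty,b]}$ with a polynomial in $x_{b+1},\ldots,x_{b'}$. When $b'<b$, the $j=\ell(\alpha)$ term on the right equals $M_\alpha(\bs{x}_{\leq b'})\cdot 1$, so isolating it yields a recursion whose remaining terms involve $M_\beta(\bs{x}_{\leq b'})$ for strictly shorter compositions $\beta$; induction on $\ell(\alpha)$ closes out the argument. The main (mild) obstacle is this case split with the accompanying induction for $\ol Q$; conceptually, the point in both arguments is that power sums and monomial quasisymmetric functions split cleanly along any threshold, which is precisely what allows passing between different values of $b$ and $b'$.
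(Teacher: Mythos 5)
Your proof is correct: one inclusion holds because both tensor factors consist of back-(quasi)symmetric functions and the back $c$-(quasi)symmetric functions form a ring, while the reverse inclusion follows from your threshold-splitting identities for $p_k$ and $M_\alpha$ (with the induction on $\ell(\alpha)$ when $b'<b$), using that $\Lambda_{(-\infty,b']}$ is generated over $\Q$ by power sums and $\QSym_{(-\infty,b']}$ is spanned by the $M_\alpha$ --- legitimate here because the paper works inside $R$, i.e., with bounded total degree, so these are the usual (graded) rings of (quasi)symmetric functions. The paper itself gives no proof of this proposition (it is imported from \cite{LamLeeShimiozono-schubert, NadeauTewari}), and your cutoff-shifting argument is essentially the standard one underlying those references.
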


\subsection{Back-stable Schubert and slide polynomials}

Next, we will define \emph{back-stable slide polynomials}, a set of functions associated to the compatible sequences with a fixed top row. Slide polynomials (in their non-back-stable form) first appeared in the  Billey-Jockusch-Stanley formula \eqref{eq:bjs-formula}, and were systematically studied by Assaf and Searles \cite{AssafSearles}. In their back-stable form, they were defined by Nadeau and Tewari \cite{NadeauTewari}.

As with compatible sequences, we will associate a back-stable slide polynomial to any colored word (in fact, any element of $\QAlg$). Using colored words does not introduce any new functions, but is useful in that it allows us to study the back-stable slide decomposition of the Schubert product using the combinatorics of shuffled colored words.

For any compatible sequence $\bs{\alpha} = (\bs{t}, \bs{b})$, let $\bs{x}^{\bs{\alpha}} = \bs{x}^{\bs{b}}$.

\begin{definition}
Let $\bs{p}\in\Zcol^*$. The \emph{back-stable slide polynomial} associated to $\bs{p}$ is the function \[\olF_{\bs{p}} = \sum_{\bs{\alpha}\in\mc{C}(\bs{p})} \bs{x}^{\bs{\alpha}}\in \ol{Q}.\]
\end{definition}

By Lemma \ref{lem:possible-bottom-rows}, $\olF_{\bs{p}} = \olF_{m(\bs{p})}$, and so $\olF_{\bs{p}} = \olF_{\bs{q}}$ if and only if $m(\bs{p}) = m(\bs{q})$. Since every element of $\BR(\bs{p})$ is the bottom row for exactly one compatible sequence with top row $\bs{p}$, we can also write 
\begin{equation*} \olF_{\bs{p}} = \sum_{\bs{b}\in\BR(\bs{p})} \bs{x}^{\bs{b}} = \bs{x}^{m(\bs{p})} + \text{lower terms},
\end{equation*}
where ``lower terms'' refers to the partial ordering on words.

The $\olF_{\bs{p}}$ are homogeneous of degree $\ell(\bs{p})$, and have support contained in $(-\infty, \bs{p}_{\ell(\bs{p})}]$; thus they are elements of $R$. Every coefficient is either 0 or 1. 

\begin{lemma}[{\!\!\cite[Lemma~4.6, Proposition~4.8]{NadeauTewari}}] \label{lem:back-quasi-sym}
$\olF_{\bs{p}}\in \ol{Q}$ for all $\bs{p}$. Furthermore, if $\bs{p}\in\ZWInc^k$ with $\bs{p}_1=a$ such that the increase set of $\bs{p}$ has size $d$, then $\olF_{\bs{i}}$ is back $(a+d)$-quasisymmetric.
\end{lemma}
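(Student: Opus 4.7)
My plan is to reduce to weakly-increasing top rows, then prove back $(a+d)$-quasisymmetry by splitting each bottom row at the threshold $a+d$ and showing that the ``low'' portion is controlled entirely by the increase pattern of $\bs{p}$, not by its specific values.

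First I would use $\olF_{\bs{p}} = \olF_{m(\bs{p})}$ from Lemma~\ref{lem:possible-bottom-rows} to replace $\bs{p}$ by $m(\bs{p}) \in \ZWInc^k$; Lemma~\ref{lem:max-bottom-inc} guarantees that $m(\bs{p})$ has the same first entry and the same increase set as $\bs{p}$, so the numerical bound $a+d$ is unchanged. For the sharper statement I would specialize $x_j \mapsto z_j$ for $j > a+d$ and prove that the resulting expression is quasisymmetric in $x_j$, $j \le a+d$. The general assertion $\olF_{\bs{p}} \in \ol{Q}$ follows from the sharp one: back-$(a+d)$-quasi implies back-$b$-quasi for all $b \le a+d$, and for $b > a+d$ the same splitting argument carried out at a higher threshold still goes through (and is easier, as fewer bottom-row values get specialized).

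The key decomposition is that every $\bs{b}\in \BR(\bs{p})$, being weakly increasing, splits uniquely as a concatenation $\bs{b} = \bs{c}\circ \bs{d}$ where $\bs{c}$ has entries $\le a+d$ and $\bs{d}$ has entries $> a+d$. Let $k' = \ell(\bs{c})$. The mixed strict/weak constraint of Definition~\ref{def:compatible-sequences} at the boundary position $j = k'$ holds automatically because $c_{k'} \le a+d < d_1$. Under the specialization, each bottom row contributes $\bs{x}^{\bs{c}}\bs{z}^{\bs{d}}$, so the coefficient of $\bs{x}^{\bs{c}}$ is a sum of $\bs{z}^{\bs{d}}$ over compatible high halves determined by the suffix $(p_{k'+1},\dots,p_k)$.

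The heart of the argument is to show that valid low halves $\bs{c}$ of a fixed length $k'$ are characterized solely by: (i) weak increase, (ii) entries in $(-\infty,a+d]$, and (iii) strict increases at the positions in $\{1,\dots,k'-1\}$ where $\bs{p}$ has one. In other words, the pointwise bound $c_i \le p_i$ must be shown to be implied by (i)--(iii). The plan is a telescoping count: if $l = |\{j \le i-1 : p_j < p_{j+1}\}|$, then iterating strict increases along $\bs{p}$ gives $p_i \ge a+l$, and telescoping the strict increases of $\bs{c}$ from position $i$ up to $k'$, together with $c_{k'} \le a+d$, yields $c_i \le a+l \le p_i$. This counting identity is the main technical obstacle because one must carefully track how the $d$ total increases of $\bs{p}$ distribute between the low half, the boundary, and the high half. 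Once it is in place, the coefficient of $\bs{x}^{\bs{c}}$ depends on $\bs{c}$ only through $\ell(\bs{c})$ and its increase set, so monomials indexed by $\bs{c}$ and $\bs{c'}$ of the same length and increase set have equal coefficients, establishing back-$(a+d)$-quasisymmetry.
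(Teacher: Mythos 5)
The paper itself gives no proof of this lemma --- it is quoted from Nadeau--Tewari --- so the only question is whether your argument stands on its own, and it does not: the step you yourself flag as the main technical obstacle is false. Telescoping the strict increases that $\bs{c}$ is forced to have between positions $i$ and $k'$ only gives $c_i \le (a+d) - \#\{j \in [i,k'-1] : p_j < p_{j+1}\}$, and since $d$ counts \emph{all} increases of $\bs{p}$ --- including those at positions $\ge k'$, i.e.\ at the boundary and inside the high part --- this equals $a + l + \#\{j \ge k' : p_j < p_{j+1}\}$, not $a+l$. So the pointwise bound $c_i \le p_i$ is not implied by weak increase, entries $\le a+d$, and strictness at the increases of $\bs{p}$ whenever $\bs{p}$ still has increases at or beyond the split point.

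Moreover, this is not a repairable bookkeeping issue, because with threshold $a+d$ the statement fails as transcribed. Take $\bs{p} = (1,3)$, so $a = 1$, $d = 1$, $a+d = 2$. Then $\BR(\bs{p}) = \{(b_1,b_2) : b_1 < b_2,\ b_1 \le 1,\ b_2 \le 3\}$, so $x_1x_3$ occurs in $\olF_{\bs{p}}$ while $x_2x_3$ does not; since $(1)$ and $(2)$ have the same length and increase set with support in $(-\infty,2]$, $\olF_{(1,3)}$ is not back $2$-quasisymmetric (it is only back $1$-quasisymmetric). Your scheme does prove back $a$-quasisymmetry: with threshold $a = p_1$ every low entry satisfies $c'_i \le a = p_1 \le p_i$, so the pointwise bound really is automatic, and together with $\olF_{\bs{p}} = \olF_{m(\bs{p})}$ this already gives $\olF_{\bs{p}} \in \ol{Q}$ for all $\bs{p}$. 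The sharper threshold $a+d$ holds only in special cases (e.g.\ when $p_{j+1} = p_j + 1$ at every increase), so the quoted Nadeau--Tewari bound should be rechecked against their conventions. Two smaller corrections: $m(\bs{p})$ has the same increase set as $\bs{p}$ but not necessarily the same first entry, and your remark that the splitting argument ``still goes through'' for thresholds $b > a+d$ is backwards --- back $b$-quasisymmetry is a \emph{stronger} condition as $b$ grows (membership in $\ol{Q}$ only requires it for a single $b$), and for instance $\olF_{(1,2)}$ is back $3$-quasisymmetric for no reason at all: it is not, since $x_1x_2$ appears but $x_2x_3$ does not.
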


\begin{remark} \label{rem:slide-comparison}
We have indexed the polynomials $\olF_{\bs{p}}$ by words, whereas Assaf and Searles \cite{AssafSearles} primarily use compositions. Nadeau and Tewari \cite{NadeauTewari} use both notations. In the case of increasing words, these notations are equivalent: the index-$i$ entry of a composition denotes the number of entries of the corresponding word that equal $i$.

We find the word approach more useful for three reasons. The first is that using words provides us with more flexibility: the same function can be the slide polynomial of multiple words, and we sometimes want to distinguish these instances. The second reason is that the product formula is simpler when using words instead of compositions, involving the shuffle product directly (Proposition~\ref{prop:word-poly-multiplication}). The third and most important reason is that the action of the differential operators in the next section is more naturally expressed on words. 
\end{remark}

Next, we define the \emph{back-stable Schubert polynomials}. These functions have a long folk-lore history and were studied systematically by Lam, Lee, and Shimozono \cite{LamLeeShimiozono-schubert}. Ideas of Li \cite{Li-back-stable-conjecture} also suggest the usefulness of working in this context.

\begin{definition}
Fix $w\in S_\Z$. The \emph{back-stable Schubert polynomial} $\ol{\Sch}_w$ associated to $w$ is given by 
\begin{equation} \label{eq:Schubert-monomial-expansion}
\ol{\Sch}_w = \sum_{\bs{w}\in \RW(w)} \sum_{\bs{p}\in\BR(\bs{w})} \bs{x}^{\bs{p}} \in \ol{R}.
\end{equation}
\end{definition}

The next proposition follows directly from the definitions.

\begin{proposition}
\begin{equation} \label{eq:Schubert-word-expansion}
\ol{\Sch}_w = \sum_{\bs{w}\in \RW(w)} \olF_{\bs{w}}.
\end{equation}
\end{proposition}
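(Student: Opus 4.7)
The plan is to unpack both sides to the same double sum over reduced words and compatible sequences. This is essentially a definition check, and I expect no genuine obstacle — the only ``content'' is the observation already recorded after Lemma~\ref{lem:possible-bottom-rows}, namely that for a fixed top row $\bs{w}$, the map $\bs{\alpha} = (\bs{w}, \bs{b}) \mapsto \bs{b}$ is a bijection between $\mc{C}(\bs{w})$ and $\BR(\bs{w})$, with $\bs{x}^{\bs{\alpha}} = \bs{x}^{\bs{b}}$ by the convention $\bs{x}^{\bs{\alpha}} := \bs{x}^{\bs{b}}$.

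Concretely, I would start from the right-hand side of \eqref{eq:Schubert-word-expansion} and expand using the definition of the back-stable slide polynomial:
\[
\sum_{\bs{w}\in \RW(w)} \olF_{\bs{w}} \;=\; \sum_{\bs{w}\in \RW(w)} \sum_{\bs{\alpha}\in\mc{C}(\bs{w})} \bs{x}^{\bs{\alpha}}.
\]
Since every element of $\mc{C}(\bs{w})$ has $\bs{w}$ as its top row and is determined by its bottom row, reindexing the inner sum over $\bs{b}\in\BR(\bs{w})$ gives
\[
\sum_{\bs{w}\in \RW(w)} \olF_{\bs{w}} \;=\; \sum_{\bs{w}\in \RW(w)} \sum_{\bs{b}\in\BR(\bs{w})} \bs{x}^{\bs{b}},
\]
which is precisely the right-hand side of \eqref{eq:Schubert-monomial-expansion}, i.e.\ $\ol{\Sch}_w$.

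Thus the proof is a one-line reindexing of the double sum, with the only nontrivial input being the bijection $\mc{C}(\bs{w}) \leftrightarrow \BR(\bs{w})$ for fixed $\bs{w}$. No convergence issues arise because, as noted after the definition of $\olF_{\bs{p}}$, each $\olF_{\bs{w}}$ lies in $R$ (bounded degree $\ell(w)$, support bounded above by $\max\supp \bs{w}$), and $\RW(w)$ is finite, so the sum is a well-defined element of $\ol{R}$.
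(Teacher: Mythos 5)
Your proof is correct and matches the paper's intent exactly: the paper states this proposition as following directly from the definitions, and your argument is precisely that definition-unpacking, reindexing $\mc{C}(\bs{w})$ by bottom rows $\BR(\bs{w})$ to match \eqref{eq:Schubert-monomial-expansion}. Nothing further is needed.
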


\begin{theorem}[\!\!\cite{LamLeeShimiozono-schubert, NadeauTewari}] \label{thm:word-basis} 
The set $\{\ol{\Sch}_w \;|\; w\in S_\Z\}$ is a $\Q$-basis of $\ol{R}$, while the set $\{\olF_{\bs{p}} \;|\; \bs{p}\in \ZWInc^*\}$ is a $\Q$-basis of $\ol{Q}$.
\end{theorem}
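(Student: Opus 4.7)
The plan is a leading-term/triangularity argument for both statements. For any $\bs{p}\in\ZWInc^*$, one has $m(\bs{p}) = \bs{p}$, so the displayed formula just before Lemma \ref{lem:back-quasi-sym} gives $\olF_{\bs{p}} = \bs{x}^{\bs{p}} + \text{(strictly lower monomials)}$, where ``lower'' refers to the partial order \eqref{eq:partial-ordering} on words. This immediately yields linear independence of $\{\olF_{\bs{p}}\}_{\bs{p}\in\ZWInc^*}$: in a supposed finite dependence $\sum c_{\bs{p}}\olF_{\bs{p}} = 0$, picking $\bs{p}^*$ maximal in the partial order among those with $c_{\bs{p}^*}\neq 0$, the monomial $\bs{x}^{\bs{p}^*}$ appears only in $\olF_{\bs{p}^*}$, forcing $c_{\bs{p}^*} = 0$.

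For spanning of $\ol{Q}$, I would take $f\in\ol{Q}$; Proposition \ref{prop:back-quasi} provides $b\in\Z$ with $f\in\QSym_{(-\infty,b]}\otimes\Poly_\Z$. Expand $f = \sum c_{\bs{b}}\bs{x}^{\bs{b}}$ over $\bs{b}\in\ZWInc^*$ and peel off leading terms inductively: at each step, choose $\bs{p}$ maximal in the partial order among those with $c_{\bs{p}}\neq 0$ and replace $f$ by $f - c_{\bs{p}}\olF_{\bs{p}}$. Lemma \ref{lem:back-quasi-sym} guarantees the remainder stays in $\ol{Q}$, and triangularity ensures the leading term cancels cleanly; the back-quasisymmetric structure forces ``sibling'' monomials $\bs{x}^{\bs{b}}$ of $\bs{x}^{\bs{p}}$ (those obtained from $\bs{p}$ by back-shifting entries within $(-\infty,b]$ preserving the increase set) to have coefficients matching those inside $\olF_{\bs{p}}$, so the subtraction is consistent with the back-quasisymmetric structure. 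Iterating, organized by total degree and a well-founded refinement of the partial order, yields the desired expansion.

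For the Schubert basis of $\ol{R}$, a parallel argument works. One extracts a distinguished leading monomial from the expansion \eqref{eq:Schubert-monomial-expansion}, indexed (after a suitable identification) by the Lehmer code of $w$; since $w\mapsto\code(w)$ is injective on $S_\Z$, triangularity gives linear independence of $\{\ol{\Sch}_w\}_{w\in S_\Z}$. For spanning, the tensor decomposition $\ol{R} = \Lambda_{(-\infty,b]}\otimes\Poly_\Z$ of Proposition \ref{prop:back-quasi} reduces the problem to the classical Schur basis of $\Lambda_{(-\infty,b]}$ combined with the finite-variable Schubert basis of $\Poly_\Z$; the former is produced by Grassmannian back-stable Schubert polynomials, and the latter by ordinary Schubert polynomials, so assembling gives a full basis.

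The main obstacle is the bookkeeping required by the back-stable setting: expansions may be infinite, and one must verify convergence in the natural topology on $R$ (bounded total degree and support bounded above) as well as uniqueness. The technical heart of this bookkeeping is the compatibility between the increase-set condition defining $\BR(\bs{p})$ and back-quasisymmetry, which is exactly what makes the triangularity of Step~1 consistent with the back-quasisymmetric structure exploited in Step~2; I expect identifying the correct sibling classes of weakly-increasing words to be the most delicate point to get right.
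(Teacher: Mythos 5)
The paper does not actually prove Theorem~\ref{thm:word-basis}: it is quoted from Lam--Lee--Shimozono and Nadeau--Tewari, so there is no internal proof to compare against. Your leading-term strategy is the standard one, and the linear-independence halves are essentially fine: for weakly increasing $\bs{p}$ one has $m(\bs{p})=\bs{p}$, so $\olF_{\bs{p}}=\bs{x}^{\bs{p}}+{}$lower terms with respect to \eqref{eq:partial-ordering}, and (working degree by degree, since that order only compares words of equal length) a maximal index in a finite dependence must have zero coefficient. The analogous triangularity for $\ol{\Sch}_w$ with leading monomial $\bs{x}^{\code(w)}$ is plausible but is asserted rather than proved; it requires comparing \emph{all} bottom rows of \emph{all} reduced words of $w$, not just the classical statement for ordinary Schubert polynomials.

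The genuine gaps are in the spanning halves. For $\ol{Q}$: ``basis'' means every element is a \emph{finite} $\Q$-linear combination, but your greedy subtraction removes one leading monomial at a time from a function that typically has infinitely many monomials (already $\ol{\Sch}_{s_0}=\cdots+x_{-1}+x_0$), and appealing to convergence in the topology of $R$ would only produce an infinite expansion, which is not the assertion. The missing idea is to use back $b$-quasisymmetry to force termination: maximality of the chosen $\bs{p}$ pins its entries $\le b$ against $b$ (otherwise a sibling monomial with the same coefficient would be strictly larger), so by Lemma~\ref{lem:back-quasi-sym} the subtracted $\olF_{\bs{p}}$ is itself back $b$-quasisymmetric and the subtraction kills an entire sibling class at once; for fixed degree, support bound and $b$ there are only finitely many classes. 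You flag this point as delicate but frame it as convergence bookkeeping, which is the wrong fix. For $\ol{R}$: from Proposition~\ref{prop:back-quasi} you get that products $s_\lambda(\bs{x}_{(-\infty,b]})\cdot(\text{Schubert polynomial})$ span $\ol{R}$, but such a product is \emph{not} a back-stable Schubert polynomial, so ``assembling'' the two bases does not show that $\{\ol{\Sch}_w\}$ spans; one still has to expand each such product as a finite combination of $\ol{\Sch}_w$'s, and that expansion (a unitriangular formula writing $\ol{\Sch}_w$ as a sum of Stanley-symmetric-function times ordinary-Schubert terms) is exactly the substance of the cited Lam--Lee--Shimozono result, so as written this step is circular. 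A smaller slip: ordinary Schubert polynomials give a basis of $\Q[x_1,x_2,\ldots]$, not of $\Poly_\Z$; one needs shifted families and a normalization to avoid overcounting.
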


\section{The algebra of words} \label{sec:colored-shuffle-algebra}

In this section, we connect the colored shuffle algebra and the ring of back-stable quasisymmetric functions by showing that the latter is a quotient of the former. Moreover, we discuss two operators, $\xi$ and $\nabla$, which act on $\QAlg$, and descend to previously-studied operators on $\ol{Q}$ and $\ol{R}$, respectively.

\subsection{Homomorphism between $\QAlg$ and rings of polynomials}

We succinctly represent linear combinations of slide polynomials by allowing 
$\olF$ to be indexed by elements of $\QAlg$.
\begin{definition} \label{def:bs-slide-vector}
    For $\bsvec{a} = \sum_{\bs{p} \in \Zcol^*} c_{\bs{p}}\bs{p} \in \QAlg$, we define $\olF_{\bsvec{a}} = \sum_{\bs{p} \in \Zcol^*} c_{\bs{p}}\olF_{\bs{p}}$.
\end{definition}

Nadeau and Tewari used slightly different notation, but their proof of the following multiplication rule goes through:
\begin{proposition}[{\!\!\cite[Proposition~4.12]{NadeauTewari}}] \label{prop:word-poly-multiplication}    $\olF_{\bsvec{a}}\olF_{\bsvec{b}} = \olF_{\bsvec{a} \bsvec{b}}$.
\end{proposition}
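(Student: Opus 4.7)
The plan is to reduce to individual words via bilinearity and then exhibit a weight-preserving bijection between compatible sequences of the factors and compatible sequences of the shuffled product. By linearity of $\olF$ in its index (Definition~\ref{def:bs-slide-vector}) and bilinearity of the colored shuffle product, it suffices to prove $\olF_{\bs{p}}\olF_{\bs{q}} = \olF_{\bs{p}\bs{q}}$ for individual colored words $\bs{p},\bs{q}\in\Zcol^*$. Unwinding the definitions gives
\[
\olF_{\bs{p}}\olF_{\bs{q}} = \sum_{(\bs{\alpha},\bs{\beta})\in\mc{C}(\bs{p})\times\mc{C}(\bs{q})}\bs{x}^{\bs{\alpha}}\bs{x}^{\bs{\beta}}, \qquad \olF_{\bs{p}\bs{q}} = \sum_{\bs{r}\in\bs{p}\shuffle\bs{q}\uparrow\maxcol{\bs{p}}}\;\sum_{\bs{\gamma}\in\mc{C}(\bs{r})}\bs{x}^{\bs{\gamma}},
\]
and since $\bs{x}^{\bs{b}}\bs{x}^{\bs{c}}=\bs{x}^{\bs{s}}$ for the unique weakly-increasing merge $\bs{s}$ of the bottom rows $\bs{b},\bs{c}$, the task becomes producing a weight-preserving bijection
\[
\Phi \colon \mc{C}(\bs{p})\times\mc{C}(\bs{q}) \longleftrightarrow \bigsqcup_{\bs{r}\in\bs{p}\shuffle\bs{q}\uparrow\maxcol{\bs{p}}}\mc{C}(\bs{r}).
\]

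The inverse of $\Phi$ will be restriction: given $(\bs{r},\bs{s})\in\mc{C}(\bs{r})$, the color shift in the colored shuffle product renders the $\bs{p}$-positions (colors $\le\maxcol{\bs{p}}$) and $\bs{q}$-positions (colors $>\maxcol{\bs{p}}$) of $\bs{r}$ unambiguous, and restricting $\bs{s}$ to each set yields candidate bottom rows $\bs{b},\bs{c}$. Weak increase and the domination $\bs{b}\le\bs{p}$ are inherited from the corresponding properties of $\bs{s}$. The nontrivial check will be the compatibility $p_j<p_{j+1}\Rightarrow b_j<b_{j+1}$: I plan to argue by contradiction, showing that if $s_i=s_{i+1}=\cdots=s_{i'}$ on the $\bs{r}$-positions $i<i'$ of $p_j,p_{j+1}$, then the contrapositive of compatibility for $(\bs{r},\bs{s})$ gives $r_i\ge\cdots\ge r_{i'}$; comparing entries across the color barrier then yields $\val(p_j)>\val(r_{i+1})\ge\cdots\ge\val(r_{i'-1})\ge\val(p_{j+1})$, contradicting $p_j<p_{j+1}$.

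For $\Phi$ itself I will start with $((\bs{p},\bs{b}),(\bs{q},\bs{c}))$, take $\bs{s}$ to be the weakly-increasing merge, and assemble $\bs{r}$ block-by-block across the maximal constant runs of $\bs{s}$. Within a block at value $v$, weak increase of $\bs{b}$ forces its contributions to come from consecutive positions of $\bs{p}$, and the contrapositive of compatibility for $(\bs{p},\bs{b})$ makes the corresponding $\bs{p}$-entries weakly decreasing in $\Zcol$; symmetrically for $\bs{q}$. The standard greedy merge of two weakly-decreasing sequences into a weakly-decreasing sequence is unique whenever cross-sequence ties in $\Zcol$ cannot occur, which holds here because the disjoint color ranges preclude any $\bs{p}$-entry from equaling any $\bs{q}$-entry as colored integers. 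This canonical intra-block merge yields $\bs{r}$, and weak decrease of $\bs{r}$ on each constant block of $\bs{s}$ is precisely the contrapositive of the required condition $\bs{r}_i<\bs{r}_{i+1}\Rightarrow \bs{s}_i<\bs{s}_{i+1}$, so $(\bs{r},\bs{s})\in\mc{C}(\bs{r})$.

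Verifying that the two maps are mutual inverses will reduce to the uniqueness of the intra-block merge, which I expect to be the main obstacle: the entire argument hinges on the color shift built into the colored shuffle product preventing color collisions between $\bs{p}$- and $\bs{q}$-entries, so that the weakly-decreasing interleaving within each constant block of $\bs{s}$ is forced. Once uniqueness is in hand, weight preservation $\bs{x}^{\bs{b}}\bs{x}^{\bs{c}}=\bs{x}^{\bs{s}}$ is tautological, and summing over $\Phi$ yields the identity.
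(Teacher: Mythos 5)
Your bijective argument is correct: the restriction/greedy-merge correspondence between $\mc{C}(\bs{p})\times\mc{C}(\bs{q})$ and $\bigsqcup_{\bs{r}}\mc{C}(\bs{r})$ is the standard proof of this multiplication rule, and the point you flag as delicate (uniqueness of the weakly-decreasing merge within each constant block of the bottom row, forced by the disjoint color ranges) is handled correctly, with only the immediate domination check $\bs{s}\le\bs{r}$ in the forward direction left implicit. The paper itself gives no argument here --- it cites Nadeau--Tewari's Proposition~4.12 and notes that their proof goes through in this notation --- and your bijection is essentially that same proof, so the approaches coincide.
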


Consider the linear map $\phi : \QAlg \rightarrow \ol{Q}$ defined by $\bsvec{a} \mapsto \olF_{\bsvec{a}}$. For notational convenience, we will also use $\phi$ to refer to the map it induces on any subquotient of $\QAlg$. Recall that colored words $\bs{p}$ and $\bs{q}$ are bottom-row-equivalent, $\bs{p}\equiv\bs{q}$, if $m(\bs{p}) = m(\bs{q})$.

\begin{proposition}
$\phi$ is a surjective ring homomorphism, with kernel spanned by the formal differences $\bs{p}-\bs{q}$, where $\bs{p}\equiv\bs{q}$.
\end{proposition}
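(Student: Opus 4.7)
The plan is to establish three things in turn: that $\phi$ is multiplicative, that it is surjective, and that $\ker\phi$ equals the span $K$ of the claimed differences $\bs{p}-\bs{q}$ with $\bs{p}\equiv\bs{q}$. The first two are essentially restatements of results already in hand: multiplicativity is exactly Proposition~\ref{prop:word-poly-multiplication}, since $\phi(\bsvec{a}\bsvec{b}) = \olF_{\bsvec{a}\bsvec{b}} = \olF_{\bsvec{a}}\olF_{\bsvec{b}} = \phi(\bsvec{a})\phi(\bsvec{b})$, combined with the $\Q$-linearity built into Definition~\ref{def:bs-slide-vector}; surjectivity follows by noting that every basis element $\olF_{\bs{q}}$ of $\ol{Q}$ (in the sense of Theorem~\ref{thm:word-basis}) equals $\phi(\bs{q})$ when $\bs{q}\in\ZWInc^*$ is regarded as a monocolored element of $\Zcol^*$.

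The substance of the proof lies in the kernel computation. One containment, $K\subseteq\ker\phi$, is immediate from Lemma~\ref{lem:possible-bottom-rows}, which implies that $\olF_{\bs{p}}$ depends only on $m(\bs{p})$, so that $\bs{p}\equiv\bs{q}$ forces $\olF_{\bs{p}}=\olF_{\bs{q}}$. For the reverse inclusion I would pass to the quotient $\QAlg/K$ and show that the induced linear map $\bar\phi$ is an isomorphism. The key observation is that $\equiv$-equivalence classes in $\Zcol^*$ are in bijection with $\ZWInc^*$: each class $[\bs{p}]$ meets $\ZWInc^*$ in the representative $m(\bs{p})$, and two weakly-increasing uncolored words are $\equiv$-equivalent only if they are equal, since $m$ restricts to the identity on $\ZWInc^*\subseteq\Zcol^*$. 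Hence $\{[\bs{q}]:\bs{q}\in\ZWInc^*\}$ is a $\Q$-basis of $\QAlg/K$, and $\bar\phi$ carries this basis bijectively onto the $\Q$-basis $\{\olF_{\bs{q}}:\bs{q}\in\ZWInc^*\}$ of $\ol{Q}$ supplied by Theorem~\ref{thm:word-basis}. This forces $\bar\phi$ to be a $\Q$-linear isomorphism, so $\ker\phi = K$.

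The main (and only mild) technical point is making the bijection between $\equiv$-classes and $\ZWInc^*$ precise; this reduces to the already-established surjectivity of $m(\rule{6pt}{.5pt}):\Zcol^*\to\ZWInc^*$ together with the tautology that a weakly-increasing uncolored word is its own maximal bottom row. No new combinatorics is required beyond what is assembled in Proposition~\ref{prop:word-poly-multiplication}, Lemma~\ref{lem:possible-bottom-rows}, and Theorem~\ref{thm:word-basis}, so the argument is really a bookkeeping synthesis of those three results.
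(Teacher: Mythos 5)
Your proof is correct and takes essentially the same route as the paper: multiplicativity from Definition~\ref{def:bs-slide-vector} and Proposition~\ref{prop:word-poly-multiplication}, surjectivity from Theorem~\ref{thm:word-basis}, and the kernel pinned down by the fact that $\phi$ sends the basis $\Zcol^*$ of $\QAlg$ to basis vectors $\olF_{m(\bs{p})}$ of $\ol{Q}$, so that $\phi(\bs{p})=\phi(\bs{q})$ exactly when $\bs{p}\equiv\bs{q}$. Your repackaging of the kernel step as an isomorphism $\QAlg/K\to\ol{Q}$ via the bijection between $\equiv$-classes and $\ZWInc^*$ is just a more explicit rendering of the paper's one-line linear-algebra argument (and effectively re-proves Corollary~\ref{cor:colored-isomorphism} along the way).
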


\begin{proof}
$\phi$ is a ring homomorphism by Definition~\ref{def:bs-slide-vector} and Proposition~\ref{prop:word-poly-multiplication}, and a surjective linear map by Theorem~\ref{thm:word-basis}. Since basis vectors $\bs{p}\in\Zcol^*$ are mapped to basis vectors $\olF_{\bs{p}}$, $\ker\phi$ is spanned by formal differences $\bs{p}-\bs{q}$, where $\phi(\bs{p})=\phi(\bs{q})$. This happens precisely when $\bs{p}\equiv \bs{q}$, since $\phi(\bs{p}) = \olF_{\bs{p}} = \olF_{m(\bs{p})}$.
\end{proof}
Therefore, we have realized the ring of quasisymmetric functions as a quotient of the colored shuffle algebra:
\begin{corollary} \label{cor:colored-isomorphism}
    $\QAlg/ \!\!\equiv\; \cong \ol{Q}$ as $\Q$-algebras.
\end{corollary}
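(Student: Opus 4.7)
The plan is to deduce this immediately from the previous proposition by invoking the first isomorphism theorem for $\Q$-algebras. Concretely, let $I \subseteq \QAlg$ be the $\Q$-linear span of the formal differences $\bs{p} - \bs{q}$ with $\bs{p} \equiv \bs{q}$. The preceding proposition tells us two things: (i) $\phi: \QAlg \to \ol{Q}$ is a surjective ring homomorphism, and (ii) $\ker \phi = I$. From (i) alone, $\ker \phi$ is a two-sided ideal of $\QAlg$, so $I$ is an ideal and the quotient $\QAlg/I$ inherits a well-defined $\Q$-algebra structure.

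Next, I would unwind the notation $\QAlg/\!\!\equiv$. By definition, this is the quotient vector space obtained by identifying $\bs{p}$ with $\bs{q}$ whenever $\bs{p} \equiv \bs{q}$, extended $\Q$-linearly. This is exactly the quotient by the subspace generated by the formal differences $\bs{p} - \bs{q}$ with $\bs{p} \equiv \bs{q}$, i.e.\ $\QAlg/\!\!\equiv\; = \QAlg/I$. Because $I$ is an ideal, this quotient is naturally a $\Q$-algebra, and the quotient map is the one induced by $\phi$.

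Finally, the first isomorphism theorem for $\Q$-algebras applied to $\phi$ yields
\[
\QAlg/\!\!\equiv\; = \QAlg/\ker\phi \;\xrightarrow{\;\sim\;}\; \mathrm{Im}(\phi) = \ol{Q},
\]
where the last equality uses surjectivity from the preceding proposition, and the isomorphism sends the class of a colored word $\bs{p}$ to $\olF_{\bs{p}}$. There is no real obstacle here; all the substantive work — checking multiplicativity of $\phi$ via Proposition~\ref{prop:word-poly-multiplication}, surjectivity via Theorem~\ref{thm:word-basis}, and identifying the kernel with $I$ via the fact that $\olF_{\bs{p}} = \olF_{m(\bs{p})}$ — has already been done. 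This corollary is a clean restatement of the preceding proposition.
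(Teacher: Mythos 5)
Your proposal is correct and matches the paper's intent exactly: the corollary is stated as an immediate consequence of the preceding proposition, with the first isomorphism theorem (applied to the surjective homomorphism $\phi$ whose kernel is spanned by the differences $\bs{p}-\bs{q}$ with $\bs{p}\equiv\bs{q}$) supplying the isomorphism $\QAlg/\!\!\equiv\;\cong\ol{Q}$. No issues.
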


Next, we want to realize $\ol{R}$ in $\QAlg$. We will do so as both a subspace and a quotient algebra. For $w\in S_\Z$, the \emph{Schubert vector} associated to $w$ is
\[
\Schvec(w) := \sum_{\bs{w} \in \RW(w)} \bs{w} \;\in \QAlg.
\]
By \eqref{eq:Schubert-word-expansion},
\begin{equation} \label{eq:word-poly-of-Schubert-vector}
\olF_{\Schvec(w)} = \sum_{\bs{w} \in \RW(w)} \olF_{\bs{w}} = \ol{\Sch}_w.
\end{equation}
Let $\mcS$ be the $\Q$-linear span of $\{\Schvec(w) \;|\; w \in S_\Z\}$, and let $\mcR$ be the subalgebra of $\QAlg$ generated by $\mcS$. Products of the form $\Schvec(w_1)\cdots \Schvec(w_k), w_i\in S_\Z$ form a basis for $\mcR$.

\begin{proposition} \label{prop:S-R-bijection}
    $\phi : \mcS \rightarrow \ol{R}$ is a bijective linear map.
\end{proposition}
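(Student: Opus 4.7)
The plan is to check that $\phi$ sends a basis of $\mcS$ to a basis of $\ol{R}$; bijectivity then follows immediately from linear algebra. Concretely, I would proceed in three short steps.

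First, I would verify that the spanning set $\{\Schvec(w) \mid w \in S_\Z\}$ of $\mcS$ is in fact a basis, i.e.\ that these vectors are linearly independent in $\QAlg$. Recall $\QAlg$ has basis $\Zcol^*$, and for each $w$, $\Schvec(w) = \sum_{\bs{w} \in \RW(w)} \bs{w}$ is supported on the subset $\RW(w) \subseteq \Z^* \subseteq \Zcol^*$ (using the identification $i = i^{[1]}$). Since a reduced word determines its product uniquely, the sets $\RW(w)$ are pairwise disjoint as $w$ ranges over $S_\Z$. Hence the $\Schvec(w)$ are supported on disjoint sets of basis vectors of $\QAlg$ and so are linearly independent. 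Thus $\{\Schvec(w) \mid w \in S_\Z\}$ is a $\Q$-basis of $\mcS$.

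Second, by \eqref{eq:word-poly-of-Schubert-vector}, $\phi(\Schvec(w)) = \ol{\Sch}_w$, and by Theorem~\ref{thm:word-basis}, $\{\ol{\Sch}_w \mid w \in S_\Z\}$ is a $\Q$-basis of $\ol{R}$. In particular, $\phi(\mcS) \subseteq \ol{R}$ and $\phi$ sends the basis from Step~1 bijectively to a basis of $\ol{R}$, indexed by the same set $S_\Z$. Therefore $\phi : \mcS \to \ol{R}$ is a bijective linear map.

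There is essentially no obstacle here: the only substantive input beyond Theorem~\ref{thm:word-basis} is the disjointness of $\RW(w)$ and $\RW(w')$ for $w \ne w'$, which is immediate from the definition of a reduced word. Once those two facts are in hand the statement is purely formal.
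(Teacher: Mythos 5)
Your proof is correct and takes essentially the same route as the paper, which simply cites \eqref{eq:word-poly-of-Schubert-vector} and Theorem~\ref{thm:word-basis} to conclude that $\phi$ carries the $\Schvec(w)$ onto the basis $\{\ol{\Sch}_w\}$ of $\ol{R}$. Your Step~1 (disjointness of the sets $\RW(w)$) is a fine extra check, though it is not strictly needed: linear independence of the $\Schvec(w)$ already follows from that of their images $\ol{\Sch}_w$ under the linear map $\phi$.
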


\begin{proof}
    By Theorem~\ref{thm:word-basis}, $\{\ol{\Sch}_w \;|\; w \in S_\Z\}$ forms a basis for $\ol{R}$. By \eqref{eq:word-poly-of-Schubert-vector}, $\phi(\Schvec(w)) = \ol{\Sch}_w$, so $\phi$ is a bijection $\mcS\to\ol{R}$.
\end{proof}

\begin{proposition}
    $\phi : \mcR / \!\!\equiv\; \rightarrow \ol{R}$ is an isomorphism.
\end{proposition}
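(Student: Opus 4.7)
The plan is to build on the preceding two propositions: $\phi : \QAlg \to \ol{Q}$ is a surjective ring homomorphism with kernel spanned by the differences $\bs{p} - \bs{q}$ for $\bs{p} \equiv \bs{q}$, and $\phi : \mcS \to \ol{R}$ is a linear bijection sending $\Schvec(w)$ to $\ol{\Sch}_w$. The map $\phi : \mcR/\!\!\equiv\; \to \ol{R}$ is interpreted as the restriction of the quotient map $\QAlg/\!\!\equiv\; \cong \ol{Q}$ to the image of $\mcR$; this is a ring homomorphism that is automatically injective by construction, so everything reduces to identifying the image with $\ol{R}$.

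First, I would observe that because $\mcR$ is a subalgebra of $\QAlg$, the restriction $\phi|_{\mcR}$ is a ring homomorphism, and its kernel $\mcR \cap \ker\phi$ is precisely the ideal of $\mcR$ generated by all differences $\bs{p} - \bs{q}$ with $\bs{p} \equiv \bs{q}$ that happen to lie in $\mcR$; this is by definition what the notation $\mcR/\!\!\equiv$ encodes. Passing to the quotient then gives an injective ring homomorphism $\phi : \mcR/\!\!\equiv\; \hookrightarrow \ol{Q}$.

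Second, I would identify the image. Since $\mcR$ is by definition the subalgebra of $\QAlg$ generated by $\mcS$, and a ring homomorphism sends subalgebras generated by a set to the subalgebra generated by the image of that set, $\phi(\mcR)$ is the subalgebra of $\ol{Q}$ generated by $\phi(\mcS)$. By Proposition~\ref{prop:S-R-bijection}, $\phi(\mcS) = \ol{R}$, and $\ol{R}$ is already a subring of $\ol{Q}$ (it is the ring of back-symmetric functions, as exhibited by Proposition~\ref{prop:back-quasi}). Hence the subalgebra generated by $\ol{R}$ inside $\ol{Q}$ is $\ol{R}$ itself, giving $\phi(\mcR) = \ol{R}$. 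Combined with injectivity, this establishes the isomorphism.

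There is essentially no serious obstacle here; the statement is a packaging of the earlier propositions. The only point worth pausing over is making sure that the relation $\equiv$ behaves correctly when restricted from $\QAlg$ to the subalgebra $\mcR$ — i.e., that we recover the kernel of $\phi|_{\mcR}$ rather than something smaller. This is ensured by taking the equivalence classes to be intersected with $\mcR$ as above, which is the unique interpretation making the statement sensible and consistent with Corollary~\ref{cor:colored-isomorphism}.
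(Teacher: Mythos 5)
Your proposal is correct and follows essentially the same route as the paper: restrict the isomorphism $\QAlg/\!\!\equiv\;\cong\ol{Q}$ of Corollary~\ref{cor:colored-isomorphism} to the subalgebra $\mcR$ to get injectivity, then identify the image as $\phi(\mcR)=\phi(\mcS)=\ol{R}$ using Proposition~\ref{prop:S-R-bijection}, that $\mcS$ generates $\mcR$, and that $\ol{R}$ is closed under multiplication. Your extra care about interpreting $\mcR/\!\!\equiv$ as $\mcR$ modulo $\mcR\cap\ker\phi$ is consistent with the paper's (implicit) convention and does not change the argument.
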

\begin{proof}
    Since $\mcR$ is a subalgebra of $\QAlg$, by Corollary \ref{cor:colored-isomorphism}, $\phi$ maps $\mcR / \!\!\equiv\;$ isomorphically onto its image. By the previous proposition, $\phi(\mcR / \!\!\equiv) = \phi(\mcR) = \phi(\mcS) = \ol{R}$, since $\mcS$ generates $\mcR$ and $\ol{R}$ is closed multiplicatively.
\end{proof}

\begin{corollary}
    For all $\bsvec{a} \in \mcR$, there exists a unique $\bsvec{b} \in \mcS$ such that $\bsvec{a} \equiv \bsvec{b}$.
\end{corollary}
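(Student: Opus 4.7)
The plan is to deduce the corollary directly from the two preceding propositions, which together say that the map $\phi$ identifies both $\mcS$ and $\mcR/\!\equiv$ with the same ring $\ol{R}$. So the content of the corollary is just that this identification sends $[\bsvec{a}]$ to the unique element of $\mcS$ with the same image as $\bsvec{a}$.

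For existence, given $\bsvec{a}\in\mcR$, I would first apply $\phi$ to get $\phi(\bsvec{a})\in\phi(\mcR)=\ol{R}$. Then by Proposition~\ref{prop:S-R-bijection}, $\phi|_{\mcS}:\mcS\to\ol{R}$ is a bijection, so there exists $\bsvec{b}\in\mcS$ with $\phi(\bsvec{b})=\phi(\bsvec{a})$. Since the kernel of $\phi$ on $\QAlg$ is spanned by formal differences of $\equiv$-equivalent words, the condition $\phi(\bsvec{a}-\bsvec{b})=0$ is equivalent to $\bsvec{a}\equiv\bsvec{b}$.

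For uniqueness, suppose $\bsvec{b},\bsvec{b'}\in\mcS$ both satisfy $\bsvec{a}\equiv\bsvec{b}$ and $\bsvec{a}\equiv\bsvec{b'}$. Then $\bsvec{b}\equiv\bsvec{b'}$, hence $\phi(\bsvec{b})=\phi(\bsvec{b'})$, and injectivity of $\phi|_{\mcS}$ (again Proposition~\ref{prop:S-R-bijection}) forces $\bsvec{b}=\bsvec{b'}$.

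There is no real obstacle here; the statement is essentially a bookkeeping consequence of the fact that $\phi$ induces isomorphisms $\mcS\cong\ol{R}$ and $\mcR/\!\equiv\;\cong\ol{R}$, so the composition $\mcS\hookrightarrow\mcR\twoheadrightarrow\mcR/\!\equiv$ must be a bijection. If I wanted to emphasize this structural viewpoint, I could phrase the whole proof as: the natural map $\mcS\to\mcR/\!\equiv$ factors as $\phi|_{\mcS}$ followed by the inverse of the isomorphism $\mcR/\!\equiv\;\xrightarrow{\phi}\ol{R}$, hence is itself a bijection, which is exactly the assertion of the corollary.
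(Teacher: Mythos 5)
Your proof is correct and follows essentially the same route as the paper: both reduce the statement to Proposition~\ref{prop:S-R-bijection}, using that $\bsvec{a}\equiv\bsvec{b}$ is equivalent to $\phi(\bsvec{a})=\phi(\bsvec{b})$ and that $\phi|_{\mcS}$ is a bijection onto $\ol{R}=\phi(\mcR)$. Your version just spells out the existence and uniqueness steps that the paper leaves implicit.
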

\begin{proof}
    This follows from Proposition \ref{prop:S-R-bijection}, since there is a unique $\bsvec{b}\in \mcS$ such that $\phi(\bsvec{b}) = \phi(\bsvec{a})$.
\end{proof}

Denote this unique $\bsvec{b}$ as $\mcS(\bsvec{a})$.

\begin{corollary} \label{cor:S-Schub-prod-is-expansion}
    $\mcS(\Schvec(u) \Schvec(v)) = \sum_w \ol{c_{u,v}^w} \Schvec(w)$.
\end{corollary}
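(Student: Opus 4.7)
The plan is to reduce the statement to the uniqueness clause of the corollary preceding it by applying $\phi$ to both sides and using that $\phi$ is a ring homomorphism which is bijective on $\mcS$.

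First, I would compute $\phi(\Schvec(u)\Schvec(v))$. Since $\phi$ is a ring homomorphism (by Proposition~\ref{prop:word-poly-multiplication} together with Definition~\ref{def:bs-slide-vector}) and $\phi(\Schvec(w)) = \ol{\Sch}_w$ by \eqref{eq:word-poly-of-Schubert-vector}, we get
\[
\phi(\Schvec(u)\Schvec(v)) = \phi(\Schvec(u))\phi(\Schvec(v)) = \ol{\Sch}_u\ol{\Sch}_v = \sum_w \ol{c_{u,v}^w}\ol{\Sch}_w,
\]
where the last equality is the definition of the back-stable structure constants. The sum is finite since, by Theorem~\ref{thm:word-basis}, $\{\ol{\Sch}_w \mid w \in S_\Z\}$ is a basis of $\ol{R}$ and $\ol{\Sch}_u\ol{\Sch}_v \in \ol{R}$.

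Next, by linearity of $\phi$ and \eqref{eq:word-poly-of-Schubert-vector} again,
\[
\sum_w \ol{c_{u,v}^w}\ol{\Sch}_w = \sum_w \ol{c_{u,v}^w}\phi(\Schvec(w)) = \phi\!\left(\sum_w \ol{c_{u,v}^w}\Schvec(w)\right).
\]
Since the sum $\sum_w \ol{c_{u,v}^w}\Schvec(w)$ is finite and each $\Schvec(w)\in\mcS$, this element lies in $\mcS$. Thus $\sum_w \ol{c_{u,v}^w}\Schvec(w)$ is an element of $\mcS$ whose $\phi$-image coincides with $\phi(\Schvec(u)\Schvec(v))$.

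Finally, by Proposition~\ref{prop:S-R-bijection}, $\phi$ is injective on $\mcS$, so there is at most one element of $\mcS$ with a given image in $\ol{R}$. By definition $\mcS(\Schvec(u)\Schvec(v))$ is the unique element of $\mcS$ that is bottom-row equivalent to $\Schvec(u)\Schvec(v)$, i.e.\ the unique element of $\mcS$ having the same $\phi$-image. Therefore $\mcS(\Schvec(u)\Schvec(v)) = \sum_w \ol{c_{u,v}^w}\Schvec(w)$, as claimed. There is no real obstacle here; the only subtle point is verifying that the right-hand side genuinely lies in $\mcS$ (i.e.\ that the sum is finite), which follows immediately from the basis property of back-stable Schubert polynomials.
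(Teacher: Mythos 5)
Your proposal is correct and follows essentially the same route as the paper: apply $\phi$ (equivalently, pass to $\olF$-indices), use multiplicativity of $\phi$ and the back-stable Schubert expansion to see that $\Schvec(u)\Schvec(v)$ and $\sum_w \ol{c_{u,v}^w}\Schvec(w)$ have the same image, and then conclude via the fact that the latter lies in $\mcS$ and is therefore the unique $\mcS$-representative $\mcS(\Schvec(u)\Schvec(v))$. The only difference is cosmetic: you spell out the injectivity of $\phi$ on $\mcS$ and the finiteness of the sum, which the paper leaves implicit.
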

\begin{proof}
    \[
    \olF_{\Schvec(u)\Schvec(v)} = \olF_{\Schvec(u)}\olF_{\Schvec(v)} = \ol{\Sch}_u\ol{\Sch}_v = \sum_w \ol{c_{u,v}^w} \ol{\Sch}_w = \olF_{\sum_w \ol{c_{u,v}^w} \Schvec(w)},
    \]
    so we have
    \begin{equation} \label{eq:Schub-word-equivalence}
    \Schvec(u)\Schvec(v) \equiv \sum_w \ol{c_{u,v}^w} \Schvec(w),
    \end{equation} and the right side is in $\mcS$.
\end{proof}

The maps in this subsection are represented by the following commutative diagram:

\begin{center}
\begin{tikzcd}
    \mcS \arrow[r,hook] \arrow[dr, hookrightarrow, two heads, "\phi", swap] & \mc{R} \arrow[l, bend right, two heads, "\exists ! \mcS(\_)", swap] \arrow[r, hook] \arrow[d, two heads, "\phi"] & \QAlg \arrow[d, two heads, "\phi"] \\
    &\ol{R} \arrow[r, hook] & \ol{Q} &
\end{tikzcd}
\end{center}

\subsection{Weight-preserving maps on elements of $\QAlg$}
Denote $\N[\Zcol^*] \subseteq \QAlg$ the set of $\N$-linear combinations of elements of $\Zcol^*$. Any element $\sum_{\bs{p} \in \Zcol^*} c_{\bs{p}} \bs{p} \in \N[\Zcol^*]$ can be thought of a multiset where each $\bs{p}$ has multiplicity $c_{\bs{p}}$. For elements of $\N[\Zcol^*]$ we sometimes use multiset operations such as $\in$ for membership queries or $|\cdot|$ for multiset order without comment. Under this convention, $\Schvec(w) = \RW(w)$, but we maintain the two different notations for cases where we want to make the difference explicit.

For $\bs{p} \in \ZWInc^*$, consider $\Q[\phi^{-1}(\olF_{\bs{p}})]$, the subspace of $\QAlg$ spanned by all $\bs{q}\in\Zcol^*$ with $m(\bs{q}) = \bs{p}$. Let $P_{\bs{p}}:\QAlg\to \Q[\phi^{-1}(\olF_{\bs{p}})]$ be the projection operator onto this space, and let $T_{\bs{p}}\bsvec{a}$ be the sum of the coefficients of $P_{\bs{p}}\bsvec{a}$.
Concretely, if $\bsvec{a} = \sum_{\bs{q} \in \Zcol^*}c_{\bs{q}}\bs{q}$, then

\begin{equation} \label{eq:projection-op-def}
    P_{\bs{p}}\bsvec{a} = \sum_{\substack{\bs{q} \in \Zcol^*\\ m(\bs{q}) = \bs{p}}}c_{\bs{q}}\bs{q}, 
    \qquad \text{and}\qquad T_{\bs{p}}
    \bsvec{a} = \sum_{\substack{\bs{q} \in \Zcol^*\\ m(\bs{q}) = \bs{p}}}c_{\bs{q}}.
\end{equation}

For any $\bsvec{a}, \bsvec{b} \in \N[\Zcol^*]$, it is the case that

\begin{equation*}
    \bsvec{a} \equiv \bsvec{b} \qquad\text{ if and only if } \qquad \forall \bs{p} \in \ZWInc^*, \;\; T_{\bs{p}}\bsvec{a} = T_{\bs{p}}\bsvec{b},
\end{equation*}

Then, given $\bsvec{a} \equiv \bsvec{b}$, there must exist some multiset bijection $\psi|_{\bs{p}}$ between $P_{\bs{p}}\bsvec{a}$ and  $P_{\bs{p}}\bsvec{b}$. Furthermore, we can take the union of $\psi|_{\bs{p}}$ for all $\bs{p}$ to get a bijection $\psi$ between $\bsvec{a}$ and $\bsvec{b}$ which is maximal bottom row preserving: $m(\psi(\bs{q})) = m(\bs{q})$.

Applying this discussion to the Schubert product, we get the following proposition:
\begin{proposition} \label{prop:multiset-bijection}
    For any $u,v$, there must exist some multiset bijection between $\Schvec(u)\Schvec(v)$ and $\sum_w \ol{c_{u,v}^w} \Schvec(w)$ which preserves maximal bottom row.
\end{proposition}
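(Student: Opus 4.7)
The plan is to apply the general principle articulated in the paragraph preceding the proposition: any two equivalent elements of $\N[\Zcol^*]$ admit a bijection between them which preserves the maximal bottom row. So the strategy reduces to (i) verifying that both sides of the claimed equality actually lie in $\N[\Zcol^*]$, and then (ii) invoking the equivalence from equation \eqref{eq:Schub-word-equivalence}.

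First, I would check that both multisets are well-defined as elements of $\N[\Zcol^*]$. The left side $\Schvec(u)\Schvec(v)$ is a colored shuffle product of two sums of reduced words; since $\Schvec(u),\Schvec(v)\in\N[\Zcol^*]$ and since the colored shuffle product of two basis words has coefficients in $\{0,1\}$ (as noted right after its definition), the product lies in $\N[\Zcol^*]$. The right side $\sum_w \ol{c_{u,v}^w}\Schvec(w)$ lies in $\N[\Zcol^*]$ because each $\ol{c_{u,v}^w}$ equals some ordinary Schubert structure constant $c_{\gamma^k(u),\gamma^k(v)}^{\gamma^k(w)}$ (for $k\ge BS(w)$), which is a nonnegative integer by the geometric argument of Fulton cited in the introduction.

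Next, I would invoke Corollary~\ref{cor:S-Schub-prod-is-expansion}, which yields the equivalence
\[
\Schvec(u)\Schvec(v) \equiv \sum_w \ol{c_{u,v}^w}\Schvec(w).
\]
By the characterization of $\equiv$ on $\N[\Zcol^*]$ stated just before the proposition, this equivalence is the same as saying $T_{\bs{p}}\bigl(\Schvec(u)\Schvec(v)\bigr) = T_{\bs{p}}\bigl(\sum_w \ol{c_{u,v}^w}\Schvec(w)\bigr)$ for every $\bs{p}\in\ZWInc^*$. In particular, for each $\bs{p}$, the two multisets $P_{\bs{p}}(\Schvec(u)\Schvec(v))$ and $P_{\bs{p}}(\sum_w \ol{c_{u,v}^w}\Schvec(w))$ have the same cardinality.

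Finally, for each $\bs{p}\in\ZWInc^*$, I would pick any multiset bijection $\psi_{\bs{p}}$ between these two finite multisets; since every element of either multiset has maximal bottom row equal to $\bs{p}$, the bijection $\psi_{\bs{p}}$ automatically preserves $m(\cdot)$. Taking $\psi := \bigsqcup_{\bs{p}} \psi_{\bs{p}}$ (a disjoint union over the countably many $\bs{p}\in\ZWInc^*$ for which either multiset is nonempty) yields the desired bijection on the full multisets. The only conceptual obstacle here is making sure nonnegativity of coefficients is genuinely available on both sides — once that is in hand, the construction is essentially bookkeeping, since the $P_{\bs{p}}$ provide a canonical fiberwise decomposition over $\ZWInc^*$.
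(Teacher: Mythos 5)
Your proposal is correct and follows the same route as the paper: establish that both sides lie in $\N[\Zcol^*]$ (the right side via Schubert structure constant positivity), cite the equivalence from Corollary~\ref{cor:S-Schub-prod-is-expansion}, and then assemble the bijection fiberwise over maximal bottom rows exactly as in the paragraph preceding the proposition. The extra bookkeeping you spell out with the $P_{\bs{p}}$ and $T_{\bs{p}}$ is precisely what the paper delegates to that preceding discussion.
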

\begin{proof}
$\Schvec(u)\Schvec(v) \in \N[\Zcol^*]$, and by Schubert structure constant positivity, $\sum_w \ol{c_{u,v}^w} \Schvec(w) \in \N[\Zcol^*]$. By \eqref{eq:Schub-word-equivalence}, the two multisets are bottom-row equivalent, and the above paragraph gives the desired map $\psi$.
\end{proof}

This allows for a quick proof of the following:
\begin{proposition}[Nenashev] \label{prop:nenashev}
For all $u,v$, \[\binom{\ell(u) + \ell(v)}{\ell(v)} |\RW(u)| |\RW(v)| = \sum_w \ol{c_{u,v}^w} |\RW(w)|\]
\end{proposition}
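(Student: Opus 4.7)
The plan is to compute the multiset cardinality $|\Schvec(u)\Schvec(v)| \in \N$ (the sum of coefficients of $\Schvec(u)\Schvec(v)$ viewed as an element of $\N[\Zcol^*]$) in two different ways; one evaluation will produce the left-hand side of the identity, the other will produce the right-hand side.

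For the left-hand side, I would expand $\Schvec(u)\Schvec(v) = \sum_{\bs{u}\in\RW(u)}\sum_{\bs{v}\in\RW(v)} \bs{u}\cdot\bs{v}$ and count the number of colored words (with multiplicity) produced by each pair $(\bs{u},\bs{v})$. By the definition of the colored shuffle product, $\bs{u}\cdot\bs{v} = \bs{u}\shuffle \bs{v}\!\uparrow\!1$, and every shuffled word appears with coefficient exactly $1$. The key observation is that since every entry of $\bs{u}$ has color $1$ while every entry of $\bs{v}\!\uparrow\!1$ has color $2$, the $\bs{u}$-entries and $\bs{v}$-entries of any shuffled word are distinguishable by their colors, so all $\binom{\ell(u)+\ell(v)}{\ell(v)}$ ways of interlacing are distinct colored words, each with coefficient $1$. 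Hence each pair $(\bs{u},\bs{v})$ contributes exactly $\binom{\ell(u)+\ell(v)}{\ell(v)}$ to the total cardinality, and summing over pairs gives
\[
|\Schvec(u)\Schvec(v)| \;=\; \binom{\ell(u)+\ell(v)}{\ell(v)}\,|\RW(u)|\,|\RW(v)|.
\]

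For the right-hand side, I would invoke Proposition~\ref{prop:multiset-bijection}, which supplies a multiset bijection $\psi$ between $\Schvec(u)\Schvec(v)$ and $\sum_w \ol{c_{u,v}^w}\Schvec(w)$. Any multiset bijection preserves total cardinality, so
\[
|\Schvec(u)\Schvec(v)| \;=\; \Bigl|\sum_w \ol{c_{u,v}^w}\Schvec(w)\Bigr| \;=\; \sum_w \ol{c_{u,v}^w}\,|\RW(w)|.
\]
Equating the two expressions for $|\Schvec(u)\Schvec(v)|$ yields the claim. There is no real obstacle: everything needed is already set up in the excerpt, and the proof is essentially just the observation that the colored shuffle product, unlike the quasisymmetric expansion, carries manifest binomial counting information which survives any bottom-row-preserving bijection.
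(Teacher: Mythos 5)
Your proposal is correct and follows the paper's own argument exactly: the paper likewise equates $\bigl|\Schvec(u)\Schvec(v)\bigr| = \binom{\ell(u)+\ell(v)}{\ell(v)}|\RW(u)||\RW(v)|$ with $\bigl|\sum_w \ol{c_{u,v}^w}\Schvec(w)\bigr| = \sum_w \ol{c_{u,v}^w}|\RW(w)|$ via the multiset bijection of Proposition~\ref{prop:multiset-bijection}. Your explicit justification that the color shift makes all $\binom{\ell(u)+\ell(v)}{\ell(v)}$ interlacings distinct is a detail the paper leaves implicit, but it is the same proof.
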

\begin{proof}
    There must exist some bijection $\psi : \Schvec(u)\Schvec(v) \rightarrow \sum_w \ol{c_{u,v}^w} \Schvec(w)$, so we have $\binom{\ell(u) + \ell(v)}{\ell(v)} |\RW(u)| |\RW(v)| = |\Schvec(u)\Schvec(v)| = |\sum_w \ol{c_{u,v}^w} \Schvec(w)| = \sum_w \ol{c_{u,v}^w} |\RW(w)|$.\qedhere
\end{proof}

The tensor product $\mcS\otimes\mcS$ is the vector subspace of $\QAlg$ spanned by products $\Schvec(u)\Schvec(v)$. As a multiset, $\Schvec(u)\Schvec(v)$ consists of all colored shuffles of reduced words of $u$ and $v$. Let $\M := \bigcup_{u,v} \Schvec(u)\Schvec(v)$ be the set of all such shuffles for all pairs of permutations $u,v\in S_\Z$. Then the linear span $\Q[\M]$ is a subspace of $\QAlg$ strictly containing $\mcS\otimes\mcS$. We claim:

\begin{proposition} \label{prop:magic-map-exists}
There exists a map $\psi:\M\to\RW$ whose linear extension makes the following diagram commute:
\begin{center}
\begin{tikzcd}
    \mcS \otimes \mcS \arrow[r, "\mcS(\_)"] \arrow[d, hook] & \mcS \arrow[d, hook] &  \\
    \Q[\M] \arrow[r, dashed, "\psi"] \arrow[rd, two heads, swap, "\phi"] & \Q[\RW] \arrow[d, two heads, "\phi"] \\
    & \ol{Q} &
\end{tikzcd}
\end{center}
\end{proposition}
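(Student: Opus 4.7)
The plan is to construct $\psi$ by using, for each pair $(u,v) \in S_\Z \times S_\Z$, the maximal-bottom-row-preserving multiset bijection $\psi_{u,v}: \Schvec(u)\Schvec(v) \to \sum_w \ol{c_{u,v}^w}\Schvec(w)$ provided by Proposition~\ref{prop:multiset-bijection}, and setting $\psi(\bs{p}) := \psi_{u,v}(\bs{p})$ whenever $\bs{p} \in \Schvec(u)\Schvec(v)$. Since $\sum_w \ol{c_{u,v}^w}\Schvec(w)$ is, as a multiset, a formal sum of (copies of) reduced words, this produces a set map into $\RW$.

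The key observation making $\psi$ well-defined on all of $\M$ is that a colored word $\bs{p}\in\M$ essentially determines its source pair: because the colored shuffle product shifts the second factor to colors disjoint from the first, the color-$1$ subword of $\bs{p}$ recovers a reduced word $\bs{u}\in\RW(u)$ and the color-$2$ subword recovers $\bs{v}\in\RW(v)$ (after shifting the colors back down). The only potential overlap between distinct $\Schvec(u)\Schvec(v)$'s comes from the identity permutation: a monocolored reduced word $\bs{p}$ for $u$ belongs to both $\Schvec(u)\Schvec(\text{id})$ and $\Schvec(\text{id})\Schvec(u)$, but in each of those cases the Schubert product is trivial and $\psi_{u,\text{id}}$, $\psi_{\text{id},u}$ are both the identity map, so the two candidate values of $\psi(\bs{p})$ agree. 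The same analysis shows that each $\bs{p}$ occurs with multiplicity one in any $\Schvec(u)\Schvec(v)$ to which it belongs, so that the linear extension of $\psi$ behaves as expected on the generators $\Schvec(u)\Schvec(v)$ of $\mcS\otimes\mcS$.

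Commutativity of the diagram then follows directly. For the top square, on a generator $\Schvec(u)\Schvec(v) \in \mcS \otimes \mcS$, Corollary~\ref{cor:S-Schub-prod-is-expansion} gives $\mcS(\Schvec(u)\Schvec(v)) = \sum_w \ol{c_{u,v}^w}\Schvec(w)$, which is exactly the image of $\Schvec(u)\Schvec(v)$ under the linear extension of $\psi$ by the defining property of $\psi_{u,v}$. For the bottom triangle, for any $\bs{p}\in\M$ we have $\phi(\psi(\bs{p})) = \olF_{\psi(\bs{p})} = \olF_{\bs{p}} = \phi(\bs{p})$, the middle equality holding because $\psi$ preserves maximal bottom rows and $\olF$ depends only on $m$ (Lemma~\ref{lem:possible-bottom-rows}). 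The main obstacle throughout is really just the bookkeeping needed to glue the $\psi_{u,v}$ consistently across $\M$; once this is dispatched by the uniqueness-of-pair observation, the proof is a direct assembly of Proposition~\ref{prop:multiset-bijection} and Corollary~\ref{cor:S-Schub-prod-is-expansion}.
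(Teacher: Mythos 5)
Your proof is correct and follows essentially the same route as the paper: apply Proposition~\ref{prop:multiset-bijection} to each pair $(u,v)$ and glue the resulting maximal-bottom-row-preserving bijections, with the top square commuting by Corollary~\ref{cor:S-Schub-prod-is-expansion} and the bottom triangle because $\olF_{\bs{p}}$ depends only on $m(\bs{p})$. Your well-definedness discussion is in fact more explicit than the paper's (which simply takes ``any such bijection over all pairs''); the only nitpick is that in the identity-factor cases the guaranteed bijection need not literally be the identity, but it may be \emph{chosen} to be, which is all your gluing argument requires.
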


\begin{proof}
$\mcS\otimes\mcS$ has basis given by products $\Schvec(u)\Schvec(v)$, and following either $\mcS\otimes\mcS\to \Q[\M]\to \ol{Q}$ or $\mcS\otimes\mcS\to \mcS \to \Q[\RW] \to \ol{Q}$ sends $\Schvec(u)\Schvec(v)$ to the product $\ol{\Sch}_u\ol{\Sch}_v$.

Consider an element of $\M$; this is a colored shuffled word $\bs{p}\in\Schvec(u) \Schvec(v)$ for some $u,v\in S_\Z$. Both $\Schvec(u)\Schvec(v)$ and $\mcS(\Schvec(u)\Schvec(v))$ are elements of $\N[\Zcol^*]$, so they can be considered as multisets. By Corollary \ref{cor:S-Schub-prod-is-expansion} and Proposition \ref{prop:multiset-bijection}, there exists a bijection from $\Schvec(u)\Schvec(v)$ to $\mcS(\Schvec(u)\Schvec(v))$ preserving maximal bottom row, and any such bijection over all pairs $u$ and $v$ is the desired map.
\end{proof}

\begin{problem} \label{problem:Schubert}
Find an explicit map $\psi$ satisfying Proposition \ref{prop:magic-map-exists}.
\end{problem}

A solution to Problem \ref{problem:Schubert} would give a combinatorial proof of Schubert structure positivity since for any reduced word $\bs{w}$ for $w$,
\begin{equation} \label{eq:magic-map-LR-coeffs}
\ol{c_{u,v}^w} = |\psi^{-1}(\bs{w}) \cap \Schvec(u)\Schvec(v)|.
\end{equation}

Such a map is (unsurprisingly) hard to find, but we are able to prove that any such map must have certain properties. One such property is that $\psi$ will always fix the value of the final entry: for $\bs{p} \in \Zcol^k$, $\psi(\bs{p})_k = \val(\bs{p}_k)$. In fact, more can be said about how $\psi$ acts on the last few entries of $\bs{p}$ (Proposition~\ref{prop:psi-maximal-inc-suf}).

In addition, we conjecture:
\begin{conjecture} \label{conj:magic-map}
    There exists some $\psi$ satisfying Proposition \ref{prop:magic-map-exists} such that $\psi \xi = \xi \psi$, where $\xi$ is the operator defined in the next subsection \eqref{eq:nabla-xi-word-def} which removes the first letter of a word.
\end{conjecture}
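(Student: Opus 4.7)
The plan is to construct $\psi$ recursively on word length, with the commutation $\psi\xi = \xi\psi$ serving as the defining mechanism. Set $\psi(\emptyset) = \emptyset$. For $\bs{p} \in \M$ of length $k \ge 1$, the commutation forces $\psi(\bs{p}) = (w_1, \psi(\xi(\bs{p})))$ for some integer $w_1 = w_1(\bs{p})$, so the task reduces to specifying $w_1$ consistently across all $\bs{p}$. Writing $\bs{w}' := \psi(\xi(\bs{p}))$, the conditions from Proposition \ref{prop:magic-map-exists} that $(w_1, \bs{w}')$ be a reduced word and that $m((w_1, \bs{w}')) = m(\bs{p})$ translate, via \eqref{eq:max-bot-row} and Lemma \ref{lem:max-bottom-inc}, into a narrow but (I expect) nonempty set of admissible $w_1$ determined by $\val(p_1)$, by $\bs{w}'_1$, by whether $\bs{p}$ has an increase at position $1$, and by the Bruhat/descent structure of the permutation represented by $\bs{w}'$. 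The first technical step is to verify nonemptiness in all cases.

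Next I would globalize the choice. For each pair $(u,v)$ the operator $\xi$ decomposes $\Schvec(u)\Schvec(v)$ according to which permutation contributed the first letter of the shuffle:
\[
\xi\!\left(\Schvec(u)\Schvec(v)\right) \;\subseteq\; \bigsqcup_i \Schvec(s_i u)\Schvec(v) \;\sqcup\; \bigsqcup_j \Schvec(u)\Schvec(s_j v),
\]
and the target multiset $\sum_w \overline{c_{u,v}^w}\,\Schvec(w)$ admits a compatible $\xi$-decomposition. Using the inductive bijections guaranteed by Proposition \ref{prop:multiset-bijection} on these smaller products, I would group the length-$k$ elements on both sides by their common $\bs{w}' = \psi(\xi(\bs{p}))$ and apply a Hall-marriage argument within each group to assign valid first letters. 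The input counting lemma is that, for each fixed $\bs{w}'$, the multiplicities of its $\xi$-preimages on the source and target sides agree; this follows from the bijectivity established at length $k-1$ combined with the local description of admissible $w_1$.

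The main obstacle will be coherence across distinct pairs $(u,v)$: a single colored word $\bs{p}$ can lie in several multisets $\Schvec(u)\Schvec(v)$ simultaneously (arising from different factorizations as a colored shuffle), and the recursive construction must assign it one value $\psi(\bs{p})$ consistent with \emph{all} of them. One plausible route is to bypass the ambiguity by defining $\psi$ as a single set map on $\M$ via a global greedy rule---e.g., fix a total order on reduced words and let $\psi(\bs{p})$ be the smallest $\bs{w}$ satisfying $\xi(\bs{w}) = \psi(\xi(\bs{p}))$ and $m(\bs{w}) = m(\bs{p})$---and to then prove that this uniform rule restricts to a bijection on every $\Schvec(u)\Schvec(v)$. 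Establishing the latter seems to require a new combinatorial identity relating back-stable Schubert structure constants to the $\xi$-recursive structure of words, likely a Pieri-type recursion decomposing $\overline{\Sch}_u \overline{\Sch}_v$ according to the first-letter action of $\xi$; producing and exploiting such an identity is where I expect the real difficulty to lie.
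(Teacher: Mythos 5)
This statement is not proved in the paper at all: it is stated as an open conjecture (the paper even remarks that its truth ``would open the possibility of building up $\psi$ inductively,'' which is exactly the strategy you outline), and the weaker Problem~\ref{problem:Schubert} of exhibiting \emph{any} explicit $\psi$ is also left open, with the observation that a solution would yield a combinatorial proof of Schubert positivity via \eqref{eq:magic-map-LR-coeffs}. So your submission should be judged as a research plan, and as such it does not close the gap. The decisive missing step is the ``input counting lemma'': you assert that, for a fixed $\bs{w}' = \psi(\xi(\bs{p}))$, the number of $\xi$-preimages of $\bs{w}'$ inside $\Schvec(u)\Schvec(v)$ equals the corresponding count inside $\sum_w \ol{c_{u,v}^w}\Schvec(w)$, and that this ``follows from the bijectivity established at length $k-1$.'' It does not. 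The length-$(k-1)$ bijections live on the products $\Schvec(s_iu)\Schvec(v)$ and $\Schvec(u)\Schvec(s_jv)$ and on their $\mcS$-images, whereas the target-side preimage count of $\bs{w}'$ is $\sum_{w}\ol{c_{u,v}^w}\,\#\{a : (a)\circ\bs{w}'\in\RW(w)\}$; relating these requires a Pieri/Monk-type recursion on the constants $\ol{c_{u,v}^w}$ compatible with the first-letter action of $\xi$ \emph{and} with maximal bottom rows (the bijection must preserve $m(\cdot)$, so the matching must be refined by bottom-row class, and a Hall-marriage argument needs the Hall condition on all subsets, not merely equal totals per group). You acknowledge this is ``where the real difficulty lies,'' which is an accurate self-assessment: it is the entire content of the conjecture.

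A secondary point: the obstacle you single out as ``main'' --- coherence of $\psi(\bs{p})$ across distinct pairs $(u,v)$ --- is largely illusory. An element of $\M$ carries its coloring, and the color-$1$ and color-$2$ subwords recover the reduced words of $u$ and of $v$ (up to trivial cases involving an identity factor), so each colored shuffled word lies in essentially one product $\Schvec(u)\Schvec(v)$ and no cross-pair consistency problem arises. By contrast, your proposed ``global greedy rule'' (choose the smallest $\bs{w}$ with $\xi(\bs{w})=\psi(\xi(\bs{p}))$ and $m(\bs{w})=m(\bs{p})$) has no argument that the resulting map restricts to a bijection onto $\mcS(\Schvec(u)\Schvec(v))$ on each product, nor even that the admissible set of first letters $w_1$ is nonempty in all cases; both claims are flagged as expectations. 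As it stands, the proposal reformulates the conjecture as an inductive counting identity without proving it, so the statement remains open.
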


If Conjecture \ref{conj:magic-map} holds, it would open the possibility of building up $\psi$ inductively: knowing where $\psi$ sends to length-$n$ words would severely constrain where it can send length-$(n+1)$ words.

\begin{remark}
One possible approach to Problem~\ref{problem:Schubert} would be to look for bijections satisfying Proposition~\ref{prop:magic-map-exists} in the context of Pieri's rule; that is, a maximal-bottom-row-preserving bijection between $\Schvec(u)\Schvec(s_{i+j} \ldots s_{i+1} s_{i})$ and $\mcS(\Schvec(u)\Schvec(s_{i+j} \ldots s_{i+1} s_{i}))$. Sjoblom \cite{sjoblom2024some} gives a bijection between these sets; however, his bijection does not preserve maximal bottom row.

\end{remark}
\begin{problem}
    Modify Sjoblom's bijection to preserve maximal bottom row.
\end{problem}

\subsection{Differential operators on Schubert and slide polynomials}

Next, we discuss some linear operators which act on $\QAlg$.

\begin{proposition} \label{prop:leibniz}
    Any linear operator $\bigstar$ which acts on $(p_1, \ldots, p_k) \in \Zcol^*$ by
    \[\bigstar (p_1, \ldots, p_k) = f(\val(p_1)) \cdot (p_2, \ldots, p_k),
    \]
    where $f$ is some function $\Z \rightarrow \Q$, satisfies the Leibniz rule up to maximal bottom row equivalence (ie $\bigstar (\bs{p} \bs{q}) \equiv \bigstar(\bs{p})\bs{q} + \bs{p}\!\bigstar\!(\bs{q})$).
\end{proposition}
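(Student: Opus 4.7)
The plan is to unfold the shuffle recursively to expose the first letter, and then track a small discrepancy in color shifts via bottom-row equivalence. Write $\bs{p} = (p_1,\ldots,p_j)$ and $\bs{q} = (q_1,\ldots,q_k)$, and let $\tilde{\bs{q}} = \bs{q}\uparrow\maxcol{\bs{p}}$, so that $\bs{p}\bs{q} = \bs{p}\shuffle\tilde{\bs{q}}$. By the recursive definition of the shuffle product,
\[
\bs{p}\shuffle\tilde{\bs{q}} \;=\; p_1\circ\bigl((p_2,\ldots,p_j)\shuffle\tilde{\bs{q}}\bigr) \;+\; \tilde{q}_1\circ\bigl(\bs{p}\shuffle(\tilde{q}_2,\ldots,\tilde{q}_k)\bigr).
\]
Applying $\bigstar$ strips the first letter and multiplies by $f$ of its value. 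Since $\val(\tilde{q}_1)=\val(q_1)$, this gives
\[
\bigstar(\bs{p}\bs{q}) \;=\; f(\val(p_1))\cdot(p_2,\ldots,p_j)\shuffle\bs{q}\!\uparrow\!\maxcol{\bs{p}} \;+\; f(\val(q_1))\cdot\bs{p}\shuffle(q_2,\ldots,q_k)\!\uparrow\!\maxcol{\bs{p}}.
\]

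The second summand is literally $\bs{p}\bigstar(\bs{q})$, since $\bs{p}\bigstar(\bs{q}) = f(\val(q_1))\cdot\bs{p}\shuffle(q_2,\ldots,q_k)\!\uparrow\!\maxcol{\bs{p}}$. The first summand almost equals $\bigstar(\bs{p})\bs{q} = f(\val(p_1))\cdot(p_2,\ldots,p_j)\shuffle\bs{q}\!\uparrow\!\maxcol{(p_2,\ldots,p_j)}$; the two differ only in the global color shift applied to $\bs{q}$, which can drop when $p_1$ carried the unique maximum color of $\bs{p}$. So the whole proof reduces to the following claim.

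\medskip
\noindent\textbf{Key lemma.} \emph{For any $\bs{r},\bs{s}\in\Zcol^*$ and any integer $a\ge\maxcol{\bs{r}}$, every word in the shuffle $\bs{r}\shuffle\bs{s}\!\uparrow\! a$ is bottom-row-equivalent to the corresponding word in $\bs{r}\shuffle\bs{s}\!\uparrow\! a'$ (for any other $a'\ge\maxcol{\bs{r}}$), under the bijection that matches identical shuffle patterns.}

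\medskip
For fixed shuffle pattern, the two resulting words differ only in the colors of the $\bs{s}$-entries, which are uniformly shifted. As long as the shift is $\ge\maxcol{\bs{r}}$, every $\bs{s}$-entry has color strictly larger than every $\bs{r}$-entry, so the colored-integer comparison between any $\bs{r}$-entry and any $\bs{s}$-entry is decided by value alone (with $\bs{s}$ breaking ties). Hence the sign of each adjacent comparison in the shuffled word is the same for both shifts, so the two words have the same increase set and the same values. By Lemma~\ref{lem:max-bottom-inc} and the recursive definition \eqref{eq:max-bot-row}, this forces $m$ to agree on both words, i.e.\ they are bottom-row-equivalent.

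Applying the key lemma with $\bs{r}=(p_2,\ldots,p_j)$, $\bs{s}=\bs{q}$, $a=\maxcol{\bs{p}}$, and $a'=\maxcol{(p_2,\ldots,p_j)}$ identifies the first summand above with $\bigstar(\bs{p})\bs{q}$ up to $\equiv$, yielding $\bigstar(\bs{p}\bs{q}) \equiv \bigstar(\bs{p})\bs{q} + \bs{p}\bigstar(\bs{q})$. The only real obstacle is the key lemma, and its content is precisely the benign observation that color shifts preserve all the data relevant to $m$ as long as $\bs{r}$- and $\bs{s}$-color ranges remain separated.
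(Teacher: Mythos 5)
Your proof is correct and takes essentially the same route as the paper's: expand the colored shuffle via its recursive definition, observe that the only discrepancy with $\bigstar(\bs{p})\bs{q}+\bs{p}\bigstar(\bs{q})$ is the color shift applied to $\bs{q}$ (namely $\maxcol{\bs{p}}$ versus $\maxcol{(p_2,\ldots,p_j)}$), and absorb it with bottom-row equivalence. Your ``key lemma'' is precisely the step the paper asserts in one line, $\bs{p'}\bs{q} = \bs{p'}\shuffle\bs{q}\uparrow\maxcol{\bs{p'}} \equiv \bs{p'}\shuffle\bs{q}\uparrow\maxcol{\bs{p}}$, which you justify in more detail; the only cosmetic omission is the trivial case $\bs{p}=\emptyset$ or $\bs{q}=\emptyset$.
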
 
\begin{proof}
    The statement is trivially true if $\bs{p} = \emptyset$ or if $\bs{q} = \emptyset$. Assuming otherwise, let $\bs{p'} = (p_2, \ldots, p_{\ell(\bs{p})})$ and $\bs{q'} = (q_2, \ldots, q_{\ell(\bs{q}})$. Note that $\maxcol{\bs{p}} \geq \maxcol{\bs{p'}}$ so $\bs{p'}\bs{q} = \bs{p'} \shuffle \bs{q} \uparrow \maxcol{\bs{p'}} \equiv \bs{p'} \shuffle \bs{q} \uparrow \maxcol{\bs{p}}$. So, $\bigstar(\bs{p} \bs{q}) = \bigstar(\bs{p} \shuffle \bs{q} \uparrow \maxcol{\bs{p}}) = \bigstar(p_1 \circ  (\bs{p'} \shuffle \bs{q} \uparrow \maxcol{\bs{p}}) + q_1 \circ  (\bs{p} \shuffle \bs{q'} \uparrow \maxcol{\bs{p}})) \equiv f(\val(p_1))\bs{p'}\bs{q} + f(\val(q_1))\bs{p}\bs{q'} = \bigstar(\bs{p})\bs{q} + \bs{p}\!\bigstar\!(\bs{q})$.
\end{proof}

\begin{remark}
A stronger statement is true. Call $\bs{p}\in \Zcol^*$ and $\bs{q}\in \Zcol^*$ \emph{color-equivalent}, $\bs{p} \ceq \bs{q}$, if an order-preserving reindexing of the colors sends $\bs{p}$ to $\bs{q}$. For example, $2^{[2]}1^{[2]}3^{[1]}1^{[3]} \ceq 2^{[5]}1^{[5]}3^{[2]}1^{[6]}$ since all values agree, and the colors in both words have the same relative ordering.

Color-equivalent colored words are always bottom-row-equivalent, and the operator $\bigstar$ from the previous proposition satisfies
\[
\bigstar (\bs{p} \bs{q}) \ceq \bigstar(\bs{p})\bs{q} + \bs{p}\!\bigstar\!(\bs{q}).
\]
\end{remark}

\begin{definition} \label{def:nabla-xi-word}
Let $\nabla$ and $\xi$ be the linear operators that act on $\Zcol^*$ by 
\begin{equation} \label{eq:nabla-xi-word-def}
\nabla (p_1, \ldots, p_k) = \val(p_1) (p_2, \ldots, p_k), \qquad\qquad
\xi (p_1, \ldots, p_k) = (p_2, \ldots, p_k).
\end{equation}
\end{definition}

By Proposition \ref{prop:leibniz}, $\nabla$ and $\xi$ satisfy the Leibniz rule up to maximal bottom row equivalence.

\begin{proposition} \label{prop:diff-Schub-action}
    \begin{equation} \label{eq:nabla-xi-Q-action}
        \nabla \Schvec(w) = \sum_{k, \; \ell(s_kw)<\ell(w)} k\Schvec(s_kw) \quad\text{ and }\quad \xi \Schvec(w) = \sum_{k, \; \ell(s_kw)<\ell(w)} \Schvec(s_kw).
    \end{equation}
\end{proposition}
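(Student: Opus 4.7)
The plan is to expand both sides using the definition $\Schvec(w) = \sum_{\bs{w}\in\RW(w)}\bs{w}$ and then reorganize the sum on the left according to the first entry of each reduced word. Concretely, since $\xi$ and $\nabla$ act linearly and only touch the first letter,
\[
\xi\Schvec(w) = \sum_{\bs{w}\in\RW(w)} \xi(\bs{w}) = \sum_k \sum_{\substack{\bs{w}\in\RW(w)\\ \val(w_1)=k}} (w_2,\ldots,w_{\ell(w)}),
\]
and analogously for $\nabla$ with an extra scalar factor $k=\val(w_1)$ pulled out of the inner sum. So the whole statement reduces to identifying, for each fixed $k$, the inner set with $\RW(s_kw)$.

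The key bijective step is the following left-descent analogue of Lemma~\ref{lem:desdefs}: the assignment
\[
\bs{w} = (k,w_2,\ldots,w_{\ell(w)}) \;\longmapsto\; (w_2,\ldots,w_{\ell(w)})
\]
is a bijection between $\{\bs{w}\in\RW(w) : \val(w_1)=k\}$ and $\RW(s_kw)$ precisely when $\ell(s_kw)<\ell(w)$, and the set on the left is empty otherwise. One direction is immediate: if $(k,w_2,\ldots,w_{\ell(w)})$ is a reduced word for $w$, then $s_kw = s_{w_2}\cdots s_{w_{\ell(w)}}$ has a word of length $\ell(w)-1$, so $\ell(s_kw)\le \ell(w)-1$, and since lengths change by $\pm 1$ under multiplication by a simple reflection we get equality and the shorter word is reduced. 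Conversely, if $\ell(s_kw)<\ell(w)$, prepending $k$ to any reduced word for $s_kw$ gives a word for $w$ of length $\ell(w)$, which is automatically reduced; this is the inverse map.

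Substituting the bijection into the reorganized sum gives $\xi\Schvec(w) = \sum_{k:\,\ell(s_kw)<\ell(w)} \Schvec(s_kw)$, and the $\nabla$ formula follows identically because the first-entry value $\val(w_1)=k$ is constant across the inner sum and can be pulled out. There is essentially no obstacle here beyond carefully checking the left-descent bijection; the color bookkeeping is trivial since reduced words live in $\Z^*\subseteq\Zcol^*$ under the identification $i=i^{[1]}$, so $\val(w_1)$ really does record the simple reflection index $k$.
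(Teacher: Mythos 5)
Your proposal is correct and follows essentially the same route as the paper: both expand $\Schvec(w)$ over reduced words, group by the first letter, and use the bijection between reduced words of $w$ beginning with $k$ and reduced words of $s_kw$ when $\ell(s_kw)<\ell(w)$. You simply spell out the left-descent bijection in more detail than the paper does, which is fine.
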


\begin{proof}
    This can be seen because $\nabla \Schvec(w) = \sum_{\bs{w} \in \RW(w)} \bs{w}_1(\bs{w}_2, \ldots, \bs{w}_k)$. Note that given $s_k w < w$, reduced words for $s_k w$ are in bijection with reduced words for $w$ that start with $k$. Thus, $\sum_{\bs{w} \in \RW(w)} \bs{w}_1(\bs{w}_2, \ldots, \bs{w}_k) = \sum_{k, s_kw<w} k\Schvec(s_kw)$. Similar reasoning holds for $\xi$.
\end{proof}

Thus, we get the following corollary from the definition of $\mcR$:
\begin{corollary} \label{cor:nabla-xi-R}
    $\bsvec{a} \in \mcR$ implies that $\nabla \bsvec{a} \in \mcR$ and $\xi \bsvec{a} \in \mcR$.
\end{corollary}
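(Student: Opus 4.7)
The approach is to reduce to a spanning set of $\mcR$ and then apply a Leibniz-type expansion. Since $\mcR$ is the subalgebra of $\QAlg$ generated by $\mcS$, the products $\Schvec(u_1)\Schvec(u_2)\cdots\Schvec(u_k)$ with $u_i\in S_{\Z}$ form a basis of $\mcR$, so by linearity it suffices to verify that $\nabla$ and $\xi$ send each such product into $\mcR$.

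The plan is to apply the Leibniz rule of Proposition~\ref{prop:leibniz} inductively to such a product, obtaining
\[
\nabla\!\bigl(\Schvec(u_1)\cdots\Schvec(u_k)\bigr)=\sum_{i=1}^{k}\Schvec(u_1)\cdots\Schvec(u_{i-1})\bigl(\nabla\Schvec(u_i)\bigr)\Schvec(u_{i+1})\cdots\Schvec(u_k),
\]
and the analogous identity for $\xi$. By Proposition~\ref{prop:diff-Schub-action}, each $\nabla\Schvec(u_i)$ (and each $\xi\Schvec(u_i)$) lies in $\mcS$, so every summand on the right is a $\Q$-linear combination of products of Schubert vectors, and hence lies in $\mcR$; summing then gives the conclusion.

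The one point that will require genuine care is that Proposition~\ref{prop:leibniz} furnishes the Leibniz identity only up to bottom-row equivalence $\equiv$, not as an equality in $\QAlg$. I expect this to be manageable by inspection of that proof: the $\equiv$ step there is an exchange $\maxcol{\bs{p}'}\leftrightarrow\maxcol{\bs{p}}$ on the left-hand factor, and since each Schubert vector $\Schvec(u_i)$ is monocolored (with color $1$), this exchange is automatically an equality whenever the relevant factor is not completely emptied. The delicate case, which I view as the main obstacle, is $u_i=s_a$: then $\nabla\Schvec(u_i)=a\Schvec(e)=a\,\emptyset$, and the colors of the subsequent factors shift down by one. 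I would handle this case either by direct reindexing of colors in the resulting products of Schubert vectors, or, if the literal statement in $\QAlg$ requires more bookkeeping than expected, by interpreting the conclusion modulo $\equiv$, which is anyway the form in which it is needed for the applications to the quotient $\mcR/{\equiv}\cong\ol{R}$.
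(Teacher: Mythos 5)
Your reduction to the basis products $\Schvec(u_1)\cdots\Schvec(u_k)$, followed by the Leibniz rule and Proposition~\ref{prop:diff-Schub-action} (which puts $\nabla\Schvec(w)$ and $\xi\Schvec(w)$ in $\mcS$), is exactly the paper's intended argument: the corollary is stated there with no further proof, as an immediate consequence of that proposition and the definition of $\mcR$. You have also correctly isolated the only delicate point, namely an emptied length-one factor causing the colors of later factors to drop.

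That delicate point, however, is not just bookkeeping, and your first proposed fix (recolor so as to get equality in $\QAlg$) cannot succeed, because the literal membership statement fails there. Take $\bsvec{a}=\Schvec(s_1)\Schvec(s_2)=1^{[1]}2^{[2]}+2^{[2]}1^{[1]}$; then $\nabla\bsvec{a}=2^{[2]}+2\cdot 1^{[1]}$ and $\xi\bsvec{a}=2^{[2]}+1^{[1]}$. Every product $\Schvec(u_1)\cdots\Schvec(u_k)$ is homogeneous in word length, and the only such products whose words have length one are the $\Schvec(s_m)=m^{[1]}$; hence the length-one graded piece of $\mcR$ is spanned by the color-$1$ letters $m^{[1]}$, and $2^{[2]}$ lies outside it. So $\nabla\bsvec{a}\notin\mcR$ (likewise $\xi\bsvec{a}\notin\mcR$), and the corollary can only hold in the sense of your second reading: $\nabla\bsvec{a}\equiv\nabla\mcS(\bsvec{a})\in\mcS\subseteq\mcR$, and similarly for $\xi$, which follows from your argument run modulo $\equiv$ (the $\equiv$-Leibniz rule of Proposition~\ref{prop:leibniz} plus Proposition~\ref{prop:diff-Schub-action}). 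This weaker, up-to-$\equiv$ form is also all that the paper uses downstream: in Theorem~\ref{thm:nabla-xi-preseve-equiv} and in the iterated applications of $\nabla-(n-1)\xi$ in Lemma~\ref{lem:crucial-last-step} one may replace the output by its $\mcS(\,\cdot\,)$ representative at each step. So your instinct to fall back on the interpretation modulo $\equiv$ is the correct resolution; the hoped-for literal membership in the emptied-factor case should simply be abandoned, and the statement (in the paper as well as in your write-up) deserves that caveat.
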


Next, we show that $\nabla$ and $\xi$ descend to the quotient $\ol{R}$. Applying $\phi$ to \eqref{eq:nabla-xi-Q-action}, we are led to define the following operators on $\ol{R}$:
\begin{equation} \label{eq:nabla-xi-olR-action}
\nabla \ol{\Sch}_w = \sum_{k, \ell(s_kw)<\ell(w)} k\ol{\Sch}_{s_kw},
\qquad \text{and} \qquad
\xi \ol{\Sch}_w = \sum_{k, \ell(s_kw)<\ell(w)} \ol{\Sch}_{s_kw}.
\end{equation}

These operators on $\ol{R}$ have been studied previously. $\nabla$ was defined (on polynomials) by Stanley \cite{Stanley-schubert-shenanigans}, while $\xi$ was defined by Nenashev \cite{Nenashev}. Similar operators were previously studied by Kerov; see also \cite{Okounkov-z-measures, HamakerPechenikSpeyerWeigandt}. Nenashev showed that both $\nabla$ and $\xi$ satisfy the Leibniz rule:
\begin{proposition}\cite[Propositions~4,~5]{Nenashev}
For any $f,g\in\ol{R}$,
\[
\nabla(fg) = \nabla(f)g + f\nabla(g) \qquad\qquad \text{and} \qquad\qquad \xi(fg) = \xi(f)g + f\xi(g).
\]
\end{proposition}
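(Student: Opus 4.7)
The plan is to lift the identities to the colored shuffle algebra $\QAlg$, where the Leibniz rule already holds up to the bottom-row equivalence $\equiv$ (Proposition~\ref{prop:leibniz}), and then descend to $\ol{R}$ via the ring homomorphism $\phi$ and the isomorphism $\mcR / \!\!\equiv\; \cong \ol{R}$. By bilinearity and the basis property of Schubert polynomials (Theorem~\ref{thm:word-basis}), it suffices to verify both identities for $f = \ol{\Sch}_u = \phi(\Schvec(u))$ and $g = \ol{\Sch}_v = \phi(\Schvec(v))$.

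I would handle $\xi$ first, because $\tilde\xi := \xi$ on words preserves $\equiv$ on all of $\QAlg$: if $m(\bs{p}) = m(\bs{q})$, then from the recursive definition \eqref{eq:max-bot-row} one has $m(\xi\bs{p})_j = m((p_2,\ldots))_j = m(\bs{p})_{j+1} = m(\bs{q})_{j+1} = m(\xi\bs{q})_j$ for all $j$, so $\xi$ descends to a well-defined linear operator on $\QAlg / \!\!\equiv\; = \ol{Q}$. By Proposition~\ref{prop:diff-Schub-action} and the definition \eqref{eq:nabla-xi-olR-action}, this descended operator restricts to $\xi$ on $\ol{R}$, and Proposition~\ref{prop:leibniz} then passes cleanly through $\phi$ to give the Leibniz rule for $\xi$ on $\ol{Q}$, hence on $\ol{R}$.

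For $\nabla$, applying Proposition~\ref{prop:leibniz} to $\Schvec(u)$ and $\Schvec(v)$ and then $\phi$ (a ring homomorphism satisfying $\phi\circ\nabla = \nabla\circ\phi$ on $\mcS$ by Proposition~\ref{prop:diff-Schub-action}) yields
\[
\phi(\nabla(\Schvec(u)\Schvec(v))) = \nabla(\ol{\Sch}_u)\ol{\Sch}_v + \ol{\Sch}_u\nabla(\ol{\Sch}_v).
\]
Separately, writing $\ol{\Sch}_u\ol{\Sch}_v = \phi(\mcS(\Schvec(u)\Schvec(v)))$ and applying Proposition~\ref{prop:diff-Schub-action} to the $\mcS$-element $\mcS(\Schvec(u)\Schvec(v))$ gives
\[
\nabla(\ol{\Sch}_u\ol{\Sch}_v) = \phi(\nabla\mcS(\Schvec(u)\Schvec(v))).
\]
Thus, the Leibniz identity for $\nabla$ reduces to $\phi(\nabla(\Schvec(u)\Schvec(v))) = \phi(\nabla\mcS(\Schvec(u)\Schvec(v)))$, equivalently to showing that $\nabla$ carries the equivalence $\Schvec(u)\Schvec(v)\equiv\mcS(\Schvec(u)\Schvec(v))$ (Corollary~\ref{cor:S-Schub-prod-is-expansion}) to a further equivalence in $\QAlg$.

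This last step is the main obstacle. Unlike $\xi$, the operator $\nabla$ does \emph{not} respect $\equiv$ on all of $\QAlg$: for instance, $2^{[1]}1^{[2]}\equiv 1^{[1]}1^{[1]}$ (both have maximal bottom row $(1,1)$), yet $\nabla(2^{[1]}1^{[2]}) = 2\cdot 1^{[2]}$ and $\nabla(1^{[1]}1^{[1]}) = 1\cdot 1^{[1]}$ have different $\phi$-values. The natural path forward is to strengthen Proposition~\ref{prop:multiset-bijection} by producing a bijection between the multisets $\Schvec(u)\Schvec(v)$ and $\mcS(\Schvec(u)\Schvec(v))$ that preserves both maximal bottom row \emph{and} the value $\val(p_1)$ of the first entry; any such bijection immediately implies the required equivalence. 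Nenashev's combinatorial argument in \cite{Nenashev} provides exactly the input needed to close this step, and we invoke that result here.
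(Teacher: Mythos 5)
This proposition is not proved in the paper at all: it is quoted verbatim from Nenashev (and Stanley), and the paper's logic runs in the opposite direction from yours --- it takes the Leibniz rule on $\ol{R}$ as a given external input and uses it to prove Theorem~\ref{thm:nabla-xi-preseve-equiv} (that $\nabla$ respects $\equiv$ on $\mcR$). Your $\xi$ half is fine and genuinely self-contained: the identity $m(\xi\bs{p})=\xi(m(\bs{p}))$ does show that $\xi$ descends to $\ol{Q}$, and combining Proposition~\ref{prop:leibniz}, the fact that $\phi$ is a ring homomorphism, and Proposition~\ref{prop:diff-Schub-action} gives the Leibniz rule for $\xi$ on $\ol{Q}$, hence on $\ol{R}$, without any appeal to Nenashev. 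Your counterexample showing $\nabla$ does not respect $\equiv$ on all of $\QAlg$ is also correct.

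The gap is in the $\nabla$ half. You correctly reduce the Leibniz rule for $\nabla$ to the statement $\nabla(\Schvec(u)\Schvec(v))\equiv\nabla(\mcS(\Schvec(u)\Schvec(v)))$, i.e.\ to $\nabla$ respecting $\equiv$ on $\mcR$ --- but this is exactly what the paper's Theorem~\ref{thm:nabla-xi-preseve-equiv} deduces \emph{from} Nenashev's Leibniz rule, so closing the step by ``invoking Nenashev'' is circular: Nenashev's Propositions~4 and~5 are the very statement you set out to prove. Moreover, the intermediate claim you lean on --- a strengthening of Proposition~\ref{prop:multiset-bijection} to a bijection between $\Schvec(u)\Schvec(v)$ and $\mcS(\Schvec(u)\Schvec(v))$ preserving both the maximal bottom row and the value of the first entry --- is not something Nenashev's (algebraic) argument supplies, and nothing of this kind is established in the paper; it sits in the same circle of open questions as Problem~\ref{problem:Schubert} and Conjecture~\ref{conj:magic-map}. (Such a bijection would indeed imply the needed equivalence, since bottom-row preservation passes to $\xi$ of the words, but its existence is an unproved combinatorial assertion, not a consequence of anything cited.) To make the $\nabla$ statement honest you must either reproduce Nenashev's actual proof (e.g.\ via Monk's rule/transition on $\ol{R}$) or give an independent proof that $\nabla$ preserves $\equiv$ on $\mcR$; as written, the second half of the proposal assumes what it claims to prove.
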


\begin{remark}
    In fact, $\xi$ can be defined on the larger space $\ol{Q}$, on which it still satisfies the Leibniz rule, although we will not need that here.
\end{remark}
We claim that the definition \eqref{eq:nabla-xi-olR-action} is canonical.
\begin{theorem} \label{thm:nabla-xi-preseve-equiv}
For any $\bsvec{a}, \bsvec{b} \in \QAlg$, $\bsvec{a} \equiv \bsvec{b}$ implies that $\xi \bsvec{a} \equiv \xi \bsvec{b}$, and if $\bsvec{a}, \bsvec{b} \in \mcR$, $\bsvec{a} \equiv \bsvec{b}$ implies $\nabla \bsvec{a} \equiv \nabla \bsvec{b}$.
\end{theorem}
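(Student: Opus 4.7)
The plan is to reduce both claims to the characterization of $\ker\phi$ proved just before Corollary~\ref{cor:colored-isomorphism}: $\bsvec{a}\equiv\bsvec{b}$ is equivalent to $\phi(\bsvec{a})=\phi(\bsvec{b})$, and $\ker\phi$ is spanned by formal differences $\bs{p}-\bs{q}$ with $m(\bs{p})=m(\bs{q})$. So to show $\xi$ preserves $\equiv$, it suffices to show $\xi$ preserves $\ker\phi$; to show $\nabla$ preserves $\equiv$ on $\mcR$, it suffices to show $\phi\circ\nabla = \nabla\circ\phi$ on $\mcR$ (where on the right, $\nabla$ is the operator on $\ol{R}$ from \eqref{eq:nabla-xi-olR-action}).

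For $\xi$, I would first prove the key compatibility lemma: for any colored word $\bs{p}$ with $\ell(\bs{p})\ge 1$, $m(\xi\bs{p}) = \xi m(\bs{p})$. This is immediate from the recursion \eqref{eq:max-bot-row}, since the entries $m(\bs{p})_2,\ldots,m(\bs{p})_k$ are computed by exactly the same recursion applied to $\xi\bs{p}=(p_2,\ldots,p_k)$. Given any spanning element $\bs{p}-\bs{q}\in\ker\phi$ with $m(\bs{p})=m(\bs{q})$, we then have $m(\xi\bs{p})=\xi m(\bs{p})=\xi m(\bs{q})=m(\xi\bs{q})$, so $\xi\bs{p}-\xi\bs{q}\in\ker\phi$. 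Linearity finishes this case.

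For $\nabla$ the analogous pointwise commutation $m\circ\nabla=\nabla\circ m$ fails (since colored words with the same maximal bottom row may have different first values), so the hypothesis $\bsvec{a},\bsvec{b}\in\mcR$ is genuinely needed. I would prove $\phi\circ\nabla=\nabla\circ\phi$ on $\mcR$ by induction on the length of a product of Schubert vectors, using that $\mcR$ is spanned by products $\Schvec(w_1)\cdots\Schvec(w_k)$. The base case $k=1$ is a direct calculation from Proposition~\ref{prop:diff-Schub-action} and the definition of $\nabla$ on $\ol{R}$:
\[
\phi(\nabla\Schvec(w)) = \sum_{\ell(s_kw)<\ell(w)} k\,\ol{\Sch}_{s_kw} = \nabla\ol{\Sch}_w = \nabla\phi(\Schvec(w)).
\]
For the inductive step on $\bsvec{a}\cdot\bsvec{b}$, combine the Leibniz rule for $\nabla$ in $\QAlg$ (Proposition~\ref{prop:leibniz}, which only gives Leibniz up to $\equiv$) with the Leibniz rule for $\nabla$ on $\ol{R}$ from \cite[Proposition~4]{Nenashev}:
\[
\phi(\nabla(\bsvec{a}\bsvec{b})) = \phi(\nabla\bsvec{a}\cdot\bsvec{b}) + \phi(\bsvec{a}\cdot\nabla\bsvec{b}) = \nabla\phi(\bsvec{a})\cdot\phi(\bsvec{b}) + \phi(\bsvec{a})\cdot\nabla\phi(\bsvec{b}) = \nabla\phi(\bsvec{a}\bsvec{b}),
\]
using the inductive hypothesis in the middle step and the fact that $\phi$ is a ring homomorphism well-defined on $\equiv$-classes (so the ``up to $\equiv$'' in Proposition~\ref{prop:leibniz} is harmless after applying $\phi$). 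Once $\phi\nabla=\nabla\phi$ is established on $\mcR$, the conclusion is formal: $\bsvec{a}\equiv\bsvec{b}$ gives $\phi(\bsvec{a})=\phi(\bsvec{b})$, hence $\phi(\nabla\bsvec{a})=\nabla\phi(\bsvec{a})=\nabla\phi(\bsvec{b})=\phi(\nabla\bsvec{b})$, hence $\nabla\bsvec{a}\equiv\nabla\bsvec{b}$ (using Corollary~\ref{cor:nabla-xi-R} to stay within $\mcR$).

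I do not expect a serious obstacle here: the $\xi$ half is a one-line consequence of the recursion for $m$, and the $\nabla$ half is a textbook induction once Leibniz on both sides is in place. The only point that warrants care is tracking that Proposition~\ref{prop:leibniz} gives Leibniz only modulo $\equiv$, which is precisely why the inductive step passes through $\phi$ rather than being stated directly in $\QAlg$.
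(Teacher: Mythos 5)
Your proposal is correct and follows essentially the same route as the paper: the $\xi$ half rests on the identity $m(\xi\bs{p})=\xi m(\bs{p})$ coming from the recursion \eqref{eq:max-bot-row}, and the $\nabla$ half combines the base-case identity $\phi(\nabla\Schvec(w))=\nabla\ol{\Sch}_w$ with the Leibniz rules on $\QAlg$ (up to $\equiv$) and on $\ol{R}$, exactly as in the paper. The only difference is organizational: you establish the commutation $\phi\nabla=\nabla\phi$ on $\mcR$ first (by induction on the number of Schubert factors) and deduce preservation of $\equiv$ formally, whereas the paper expands the multi-factor Leibniz chain directly and records the commutation as a corollary afterward.
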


\begin{proof}
    We first consider $\xi$. By linearity, it suffices to show that the statement is true for any $\bs{p}, \bs{q}\in \Zcol^*$. If $\bs{p} \equiv \bs{q}$, then $m(\bs{p}) = m(\bs{q})$. Deleting the first character from $\bs{p}$ and $\bs{q}$ simply deletes the first character from their maximal bottom rows, so
    \[
    m(\xi(\bs{p})) = \xi(m(\bs{p})) = \xi(m(\bs{q})) = m(\xi(\bs{q})),
    \]
    and $\xi(\bs{p}) \equiv \xi(\bs{q})$.

    Next, we consider $\nabla$. By \eqref{eq:word-poly-of-Schubert-vector}, \eqref{eq:nabla-xi-Q-action} and \eqref{eq:nabla-xi-olR-action},
    \begin{equation} \label{eq:nabla-same-as-in-words}
    \nabla \olF_{\Schvec(w)} = \nabla \ol{\Sch}_w = \sum_{k, s_kw<w} k\ol{\Sch}_{s_kw} = \olF_{\sum_{k, s_kw<w} k\Schvec(s_kw)} = \olF_{\nabla \Schvec(w)},
    \end{equation}
    and extending by linearity, the same is true for any element of $\mcS$, and so
    \begin{equation} \label{eq:nabla-apply-to-S}
    \nabla(\olF_{\bsvec{a}}) = \nabla(\olF_{\mcS(\bsvec{a})}) = \olF_{\nabla(\mcS(\bsvec{a}))}, \qquad\qquad \text{for all }\bsvec{a}\in\mcR.
    \end{equation}

    Consider the case where $\bsvec{a} = \Schvec(w_1)\Schvec(w_2)\cdots\Schvec(w_k)$ is a basis vector of $\mcR$. Using \eqref{eq:nabla-same-as-in-words} and \eqref{eq:nabla-apply-to-S}, along with Proposition \ref{prop:word-poly-multiplication} and the Leibniz rules for $\nabla$ on both $\mcR$ and $\ol{R}$, we have
    \begin{align*}
        \olF_{\nabla(\Schvec(w_1)\cdots\Schvec(w_k))}
        \\&\hspace{-30pt}= \olF_{\nabla(\Schvec(w_1))\Schvec(w_2)\cdots\Schvec(w_k) + \cdots + \Schvec(w_1)\cdots\Schvec(w_{k-1})\nabla(\Schvec(w_k))}
        \\&\hspace{-30pt}= \olF_{\nabla(\Schvec(w_1))}\olF_{\Schvec(w_2)}\cdots\olF_{\Schvec(w_k)} + \cdots + \olF_{\Schvec(w_1)}\cdots\olF_{\Schvec(w_{k-1})}\olF_{\nabla(\Schvec(w_k))}
        \\&\hspace{-30pt}= \nabla(\olF_{\Schvec(w_1)})\olF_{\Schvec(w_2)}\cdots\olF_{\Schvec(w_k)} + \cdots + \olF_{\Schvec(w_1)}\cdots \olF_{\Schvec(w_{k-1})}\nabla(\olF_{\Schvec(w_k)})
        \\&\hspace{-30pt}= \nabla(\olF_{\Schvec(w_1)}\cdots\olF_{\Schvec(w_k)})
        \\&\hspace{-30pt}= \nabla\olF_{\Schvec(w_1)\cdots\Schvec(w_k)}
        \\&\hspace{-30pt}= \olF_{\nabla(\mcS(\Schvec(w_1)\cdots\Schvec(w_k))}.
    \end{align*}
    so we have $\nabla \bsvec{a} \equiv \nabla \mcS(\bsvec{a})$. By linearity, this is true for any $\bsvec{a}\in\mcR$, and therefore if $\bsvec{a}\equiv\bsvec{b}$, then $\nabla \bsvec{a} \equiv \nabla\mcS(\bsvec{a}) = \nabla\mcS(\bsvec{b}) \equiv \nabla\bsvec{b}$.
\end{proof}

\begin{corollary} When $\nabla$ or $\xi$ is restricted to elements of $\mcR$, then $\phi \nabla = \nabla \phi$ and $\phi \xi = \xi \phi$.
\end{corollary}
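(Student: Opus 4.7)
The plan is to first verify the commutation relations on each Schubert-vector generator of $\mcS$ directly, then transport the result to arbitrary $\bsvec{a} \in \mcR$ using the canonical representative $\mcS(\bsvec{a}) \in \mcS$ and the fact that $\nabla, \xi$ preserve $\equiv$.

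\emph{Step 1 (commutation on $\mcS$).} For a single basis vector $\Schvec(w) \in \mcS$, Proposition~\ref{prop:diff-Schub-action} gives
\[
\nabla \Schvec(w) = \sum_{k,\,\ell(s_kw)<\ell(w)} k\Schvec(s_kw).
\]
Applying $\phi$ termwise and using $\phi(\Schvec(w)) = \ol{\Sch}_w$, the image is $\sum_{k,\,\ell(s_kw)<\ell(w)} k\ol{\Sch}_{s_kw}$, which is exactly $\nabla \ol{\Sch}_w = \nabla\phi(\Schvec(w))$ by the defining formula \eqref{eq:nabla-xi-olR-action}. The same computation yields $\phi(\xi\Schvec(w)) = \xi\phi(\Schvec(w))$. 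Extending by linearity, both identities hold on all of $\mcS$.

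\emph{Step 2 (extension to $\mcR$).} For arbitrary $\bsvec{a} \in \mcR$, the corollary following Proposition~\ref{prop:S-R-bijection} produces a unique $\mcS(\bsvec{a}) \in \mcS$ with $\bsvec{a} \equiv \mcS(\bsvec{a})$, and $\phi$ factors through $\equiv$, so $\phi(\bsvec{a}) = \phi(\mcS(\bsvec{a}))$. By Theorem~\ref{thm:nabla-xi-preseve-equiv}, $\nabla \bsvec{a} \equiv \nabla \mcS(\bsvec{a})$ and $\xi \bsvec{a} \equiv \xi \mcS(\bsvec{a})$, so $\phi$ again agrees on both sides. Chaining this with Step~1,
\[
\phi(\nabla \bsvec{a}) = \phi(\nabla \mcS(\bsvec{a})) = \nabla \phi(\mcS(\bsvec{a})) = \nabla \phi(\bsvec{a}),
\]
and identically for $\xi$.

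The substantive content was already absorbed into Theorem~\ref{thm:nabla-xi-preseve-equiv}: once one knows $\nabla$ and $\xi$ descend to $\mcR/\!\!\equiv$, the present corollary amounts to the observation that the induced operators agree, under the isomorphism $\phi$, with the operators on $\ol{R}$ defined in \eqref{eq:nabla-xi-olR-action} — and those definitions were calibrated precisely to make this so. Consequently there is no real remaining obstacle; the argument is essentially a bookkeeping exercise combining Step~1 with the equivalence-preservation supplied by the prior theorem.
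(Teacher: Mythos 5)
Your proposal is correct and follows essentially the same route as the paper: your Step~1 is the identity $\olF_{\nabla\Schvec(w)}=\nabla\olF_{\Schvec(w)}$ (the paper's \eqref{eq:nabla-same-as-in-words}/\eqref{eq:nabla-apply-to-S}), and your Step~2 is exactly the paper's chain $\phi(\nabla\bsvec{a})=\phi(\nabla\mcS(\bsvec{a}))=\nabla\phi(\mcS(\bsvec{a}))=\nabla\phi(\bsvec{a})$ via Theorem~\ref{thm:nabla-xi-preseve-equiv}, with the analogous argument for $\xi$.
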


\begin{proof}
Let $\bsvec{a}\in\mcR$ and let $f = \phi(\bsvec{a})$. Using Theorem~\ref{thm:nabla-xi-preseve-equiv} and \eqref{eq:nabla-apply-to-S},
    \[
    \phi (\nabla \bsvec{a}) = \olF_{\nabla\bsvec{a}} = \olF_{\nabla\mcS(\bsvec{a})} = \nabla\olF_{\mcS(\bsvec{a})} = \nabla f = \nabla \phi (\bsvec{a}). 
    \]
The same argument holds for $\xi$, since the analogue to \eqref{eq:nabla-apply-to-S}, $\xi(\olF_{\bsvec{a}}) = \xi(\olF_{\mcS(\bsvec{a})}) = \olF_{\xi(\mcS(\bsvec{a}))}$, holds for the same reasons.
\end{proof}

\begin{remark}
    Theorem~\ref{thm:nabla-xi-preseve-equiv} is much easier to show for $\xi$ than for $\nabla$, in part because the result only holds for $\nabla$ on the subalgebra $\mcR$ rather than the whole chromatic shuffle algebra.
    It would be interesting to know if there is a way to modify $\nabla$ such that $\bsvec{a} \equiv \bsvec{b} \implies \nabla \bsvec{a} \equiv \nabla \bsvec{b}$ holds on all of $\QAlg$.
\end{remark}

If $\bsvec{a}, \bsvec{b} \in \mcR$ are bottom-row-equivalent, then by linearity, $\zeta \bsvec{a} \equiv \zeta \bsvec{b}$ for any operator $\zeta$ formed by taking linear combinations and/or repeated applications of $\xi$ and $\nabla$.

\begin{example}
    If $\bsvec{a}, \bsvec{b} \in \mcR$ and $\bsvec{a}\equiv\bsvec{b}$, then by Theorem~\ref{thm:nabla-xi-preseve-equiv} the sum of the values of the $i$th entries of the words in $\bsvec{a}$ (with multiplicity) equals the corresponding sum for $\bsvec{b}$. For instance,
    $\Sch_{s_2}\Sch_{s_2} = \olF_{2^{[1]}2^{[2]} + 2^{[2]}2^{[1]}} = \Sch_{s_3s_2} + \Sch_{s_1s_2} = \olF_{32 + 12}$, so in terms of words,
    \[
    2^{[1]}2^{[2]} + 2^{[2]}2^{[1]} \equiv 32 + 12.
    \]
    Applying $\xi\nabla$ to both sides, we get $\xi\nabla(2^{[1]}2^{[2]} + 2^{[2]}2^{[1]}) = (2 + 2)\emptyset \equiv \xi\nabla(32 + 12) = (3 + 1)\emptyset$.
\end{example}

The next proposition uses this technique to give a variant of Proposition \ref{prop:nenashev}.

\begin{proposition} \;
    \begin{equation} \label{eq:sigmak-Schu-Schv}
    \binom{\ell(u) + \ell(v)}{\ell(v)} (\rho(u) |\RW(v)| + |\RW(u)| \rho(v)) = \sum_w \ol{c_{u,v}^w} \rho(w),
    \end{equation}
    where for $w \in S_\Z$,
    \[
    \rho(w) = \sum_{(w_1, \ldots, w_{\ell(w)}) \in \RW(w)} (w_1 + w_2 + \ldots + w_{\ell(w)}).
    \]
\end{proposition}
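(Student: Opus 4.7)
The strategy mirrors the proof of Proposition~\ref{prop:nenashev}, but instead of extracting cardinalities via a multiset bijection, we extract the sum of letter values by applying differential operators to the bottom-row equivalence
\[
\Schvec(u)\Schvec(v) \equiv \sum_w \ol{c_{u,v}^w}\Schvec(w)
\]
from Corollary~\ref{cor:S-Schub-prod-is-expansion}. Let $n = \ell(u) + \ell(v)$; since Schubert structure constants are nonzero only when lengths add, both sides are $\N$-linear combinations of colored words of length exactly $n$.

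The plan is to introduce, on the subspace of $\QAlg$ spanned by length-$n$ words, the operator
\[
\Sigma_n := \sum_{i=1}^{n} \xi^{n-i}\,\nabla\,\xi^{i-1},
\]
and check the elementary identity $\Sigma_n(p_1,\ldots,p_n) = \bigl(\val(p_1)+\cdots+\val(p_n)\bigr)\emptyset$. By Corollary~\ref{cor:nabla-xi-R}, all intermediate words $\xi^{i-1}\bsvec{a}$ lie in $\mcR$ when $\bsvec{a}\in\mcR$, so by Theorem~\ref{thm:nabla-xi-preseve-equiv} each application of $\xi$ or $\nabla$ preserves bottom-row equivalence. Consequently $\Sigma_n$ sends equivalences in $\mcR$ to equivalences in $\mcR$. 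But the equivalence class of $c\,\emptyset$ contains only $c\,\emptyset$ (since $m(\emptyset)=\emptyset$ is attained uniquely), so after applying $\Sigma_n$ the equivalence becomes an honest equality of scalars.

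It then remains to evaluate $\Sigma_n$ on both sides. On the right, $\Sigma_n\!\left(\sum_w \ol{c_{u,v}^w}\Schvec(w)\right) = \sum_w \ol{c_{u,v}^w}\,\rho(w)\,\emptyset$ by the definition of $\rho$. On the left, $\Schvec(u)\Schvec(v)$ is, as a multiset, the union over pairs $(\bs{u},\bs{v})\in\RW(u)\times\RW(v)$ of the $\binom{n}{\ell(v)}$ colored shuffles of $\bs{u}$ and $\bs{v}$; each such shuffle has value-sum $\sum_j \bs{u}_j + \sum_j \bs{v}_j$, since shuffling permutes entries but preserves the multiset of their values. Summing over all pairs yields
\[
\Sigma_n\bigl(\Schvec(u)\Schvec(v)\bigr) = \binom{n}{\ell(v)}\bigl(\rho(u)\,|\RW(v)| + |\RW(u)|\,\rho(v)\bigr)\emptyset,
\]
and equating coefficients of $\emptyset$ gives \eqref{eq:sigmak-Schu-Schv}.

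The only mildly delicate step is the verification that $\Sigma_n$ preserves equivalence; this is not automatic because Theorem~\ref{thm:nabla-xi-preseve-equiv} requires $\nabla$ to be applied only to elements of $\mcR$. The closure statement in Corollary~\ref{cor:nabla-xi-R} is exactly what makes the iterated composition legitimate, so once that bookkeeping is in place the computation of the two sides is routine.
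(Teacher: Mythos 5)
Your proposal is correct and follows essentially the same route as the paper: the operator $\Sigma_n$ is exactly the paper's $\sigma_k = \nabla\xi^{k-1}+\xi\nabla\xi^{k-2}+\cdots+\xi^{k-1}\nabla$, applied to the bottom-row equivalence $\Schvec(u)\Schvec(v)\equiv\sum_w \ol{c_{u,v}^w}\Schvec(w)$, with the same shuffle-counting on the left side and the same observation that an equivalence between multiples of $\emptyset$ is an equality. Your explicit bookkeeping that the intermediate terms stay in $\mcR$ (via Corollary~\ref{cor:nabla-xi-R}) so that Theorem~\ref{thm:nabla-xi-preseve-equiv} applies to the iterated operator is exactly the point the paper handles by its remark that equivalence is preserved by any composition of $\xi$ and $\nabla$ on $\mcR$.
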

\begin{proof}
    Define
    \[
    \sigma_k = \nabla \xi^{k-1} + \xi \nabla \xi^{k-2} + \ldots + \xi^{k-1} \nabla.
    \]
    It is easy to check that for $\bs{p} \in \Zcol^k$, we have $\sigma_k \bs{p} = (\val(\bs{p})_1 + \val(\bs{p})_2 + \ldots +  \val(\bs{p})_k)\emptyset$. Letting $k = \ell(u) + \ell(v)$, $\sigma_k (\Schvec(u) \Schvec(v))$ is the sum of all the values in all shuffles of all reduced words of $u$ and $v$. Consider some reduced word $\bs{u} \in \RW(u)$. There are exactly $\binom{\ell(u) + \ell(v)}{\ell(v)}|\RW(v)|$ shuffles in the product $\Schvec(u)\Schvec(v)$ which contain $\bs{u}$ as a subword (first choose which entries of the shuffle are the $\bs{u}$-entries, and then choose which reduced word of $v$ occupies the other entries). Applying $\sigma_k$ and summing over all $\bs{u}\in\RW(u)$, the total contribution to $\sigma_k (\Schvec(u) \Schvec(v))$ of all the characters in all the reduced words for $u$ is $\binom{\ell(u) + \ell(v)}{\ell(v)}\rho(u)|\RW(v)|$. Similar logic shows that the characters in $v$ contribute $\binom{\ell(u) + \ell(v)}{\ell(v)}|\RW(u)|\rho(v)$ to the sum. Thus, by Theorem~\ref{thm:nabla-xi-preseve-equiv},
    \begin{align*}
    \binom{\ell(u) + \ell(v)}{\ell(v)} (\rho(u) |\RW(v)| + |\RW(u)| \rho(v)) \emptyset
    &= \sigma_k (\Schvec(u) \Schvec(v))
    \\&\equiv \sigma_k (\sum_w \ol{c_{u,v}^w} \Schvec(w))
    = \sum_w \ol{c_{u,v}^w} \rho(w) \emptyset.
    \end{align*}
    Because the left side and the right side are constant multiples of the empty word, the equivalence implies equality.
\end{proof}

\section{Proof of the back-stabilization conjecture} \label{sec:backstabilization-thm-proof}
For this section, fix $w \in S_{\Z}$ and $u,v \in S_{\Z_+}$. We let $\psi$ be an equivalence-preserving multiset bijection from $\Schvec(u)\Schvec(v)$ to $\sum_w \ol{c_{u,v}^w} \Schvec(w)$, as guaranteed by Proposition~\ref{prop:multiset-bijection}.

\subsection{DC-triviality}

Our arguments make use of an observation of Knutson known as DC-triviality. Recall the equivalent conditions for a descent specified by Lemma \ref{lem:desdefs}.

\begin{lemma}[DC-triviality,~\cite{Knutson-descent-cycling}] \label{lem:Knutson-dc-triv}
    If $w$ has a descent at $i$ but $u,v$ don't have descents at position $i$, then $\ol{c_{u,v}^w} = 0$.
\end{lemma}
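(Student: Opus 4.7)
The plan is to run a simple contradiction argument using the maximal-bottom-row-preserving bijection $\psi$ from Proposition~\ref{prop:multiset-bijection}, combined with the observation that the final value of a colored word is recorded in its maximal bottom row. Explicitly, by the base case of the recursion \eqref{eq:max-bot-row}, for any $\bs{p}\in\Zcol^k$ we have $m(\bs{p})_k = \val(\bs{p}_k)$. Thus the value of the last letter is an invariant of the equivalence class under $\equiv$, and in particular is preserved by $\psi$.

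Suppose for contradiction that $\ol{c_{u,v}^w}\neq 0$. Since $w$ has a descent at $i$, Lemma~\ref{lem:desdefs}(d) gives a reduced word $\bs{w}=(w_1,\dots,w_k)\in\RW(w)$ with $w_k=i$. Because $\ol{c_{u,v}^w}\geq 1$, this $\bs{w}$ appears in the multiset $\sum_{w'}\ol{c_{u,v}^{w'}}\Schvec(w')$, and $m(\bs{w})_k = \val(w_k) = i$. Applying $\psi^{-1}$, we obtain a colored word $\bs{p}\in\Schvec(u)\Schvec(v)$ with $m(\bs{p}) = m(\bs{w})$, so $\val(\bs{p}_{\ell(\bs{p})}) = m(\bs{p})_{\ell(\bs{p})} = i$.

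Now $\bs{p}$ is a colored shuffle of some reduced words $\bs{u}\in\RW(u)$ and $\bs{v}\in\RW(v)$ (with colors shifted on the second factor). In any shuffle, the final entry is either the final entry of $\bs{u}$ or the final entry of $\bs{v}\!\uparrow\!\maxcol{\bs{u}}$; in either case its value agrees with the value of the last letter of $\bs{u}$ or of $\bs{v}$ respectively. Hence one of $\bs{u}$ or $\bs{v}$ ends in the letter $i$, which by Lemma~\ref{lem:desdefs}(d) forces $u$ or $v$ to have a descent at position $i$, contradicting the hypothesis. Therefore $\ol{c_{u,v}^w}=0$.

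I do not anticipate a serious obstacle: the whole framework of $\QAlg$, bottom-row equivalence, and $\psi$ was built precisely so that invariants of the class of $\bs{p}$ under $\equiv$ (here, the value of the last letter) descend across the bijection. The only thing one must verify carefully is that a colored shuffle inherits its final letter, up to color, from one of its factors, which is immediate from the definition of the colored shuffle product.
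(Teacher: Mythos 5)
Your proposal is correct and follows essentially the same argument as the paper: pick a reduced word of $w$ ending in $i$, pull it back through the maximal-bottom-row-preserving bijection of Proposition~\ref{prop:multiset-bijection}, and use $m(\bs{p})_k=\val(\bs{p}_k)$ to conclude that some shuffle in $\Schvec(u)\Schvec(v)$ ends in value $i$, contradicting the absence of descents at $i$ in $u$ and $v$. Your only addition is spelling out explicitly that the last letter of a colored shuffle comes from one of the two factors, which the paper leaves implicit.
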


\begin{proof}
    Suppose $w$ has a descent at $i$ and $\ol{c_{u,v}^w} \ne 0$. Let $k=\ell(w)$ and pick any $\bs{w} \in \RW(w)$ such that $\bs{w}_k = i$. Then $\bs{w}\in \mcS(\Schvec(u)\Schvec(v))$. Let $\bs{p} = \psi^{-1}(\bs{w}) 
    \in \Schvec(u)\Schvec(v)$. $m(\bs{w}) = m(\bs{p})$ so $\val(\bs{w}_k) = \val(\bs{p}_k) = i$, but $u$ and $v$ don't have descents at $i$ so no words in $\Schvec(u)\Schvec(v)$ end in a character with value $i$, leading to a contradiction.
\end{proof}

DC-triviality is important for back-stabilization since $u$ and $v$ have no descents at nonpositive indices. Therefore, if $\ol{c_{u,v}^w}\ne 0$, $w$ also cannot have a descent at a nonpositive index. This observation leads to the following two corollaries.

\begin{corollary} \label{cor:dc1}
    Given $w$ with $\ol{c_{u,v}^w} \neq 0$, if $\theta_i(w) = 1$, then for all $i \leq j \leq 1$, $\theta_j(w) = 1$.
\end{corollary}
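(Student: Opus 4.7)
The plan is to leverage DC-triviality to force $w$ to be strictly increasing on the ray $(-\infty, 1]$, and then to observe that any inversion witnessing $\theta_i(w) = 1$ (for $i \le 1$) must actually reach past position $1$, so the same witness also establishes $\theta_j(w) = 1$ for every $j$ with $i \le j \le 1$.

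First, I would note that since $u, v \in S_{\Z_+}$, both permutations fix every nonpositive integer, and in particular $u(m) < u(m+1)$ and $v(m) < v(m+1)$ for all $m \le 0$. By the equivalence $(a)\!\iff\!(e)$ in Lemma~\ref{lem:desdefs}, neither $u$ nor $v$ has a descent at any position $m \le 0$. Applying DC-triviality (Lemma~\ref{lem:Knutson-dc-triv}) with the hypothesis $\ol{c_{u,v}^w} \ne 0$, it follows that $w$ also has no descent at any position $m \le 0$. Equivalently, $w(m) < w(m+1)$ for all $m \le 0$, so $w$ is strictly increasing on $(-\infty, 1]$.

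Next, assume $\theta_i(w) = 1$; the claim is vacuous unless $i \le 1$, so suppose $i \le 1$. By definition there exists $k > i$ with $w(k) < w(i)$. The key observation is that such a $k$ must satisfy $k > 1$: otherwise $i < k \le 1$ would place both indices in the region where $w$ is strictly increasing, giving $w(i) < w(k)$ and contradicting $w(k) < w(i)$.

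Finally, fix any $j$ with $i \le j \le 1$. Then $k > 1 \ge j$, so $k > j$, and since $w$ is strictly increasing on $(-\infty, 1]$ we have $w(i) \le w(j)$. Combining, $w(k) < w(i) \le w(j)$, so $k$ witnesses $\theta_j(w) = 1$. I don't expect any genuine obstacle here; the only subtlety is recognizing that DC-triviality, phrased in terms of descents, converts directly into the monotonicity statement that bridges the descent condition (Lemma~\ref{lem:desdefs}(e)) and the Lehmer-code condition defining $\theta_j$.
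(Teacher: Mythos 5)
Your proof is correct and rests on the same key ingredient as the paper's: DC-triviality (Lemma~\ref{lem:Knutson-dc-triv}) together with the fact that $u,v\in S_{\Z_+}$ have no descents at nonpositive positions, hence neither does $w$. The only difference is cosmetic: the paper finishes via the Lehmer code, taking the minimal $j>i$ with $\theta_j(w)=0$ and noting $\code(w)_{j-1}>0=\code(w)_j$ would force a descent of $w$ at $j-1\le 0$, whereas you finish by translating ``no nonpositive descents'' into monotonicity of the one-line values on $(-\infty,1]$ and reusing a single inversion witness $k>1$ for every $j\in[i,1]$ --- an equally valid, essentially equivalent conclusion to the same argument.
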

\begin{proof}
    Let $j$ be the minimum index such that $j>i$ and $\theta_j(w) = 0$. Then, $\text{code}(w)_{j - 1} > 0 = \text{code}(w)_j$, so $w$ has a descent at $j-1$. By DC-triviality and the above observation, $w$ has no descents at nonpositive indices, so we must have $j>1$.
\end{proof}
\begin{corollary} \label{cor:dc2} 
     If $c_{u,v}^w \neq 0$ and $w \notin S_{\Z_+}$, then for any $\bs{w} \in \RW(w)$ and any $n \leq 0$, if $n \in \supp\bs{w}$ and $n-1 \notin \supp\bs{w}$, then $\BS(w) = 1-n$.
\end{corollary}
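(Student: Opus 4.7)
The plan is to use the formula $BS(w) = 1 - \min(\supp\bs{w}) = 1 - \min(\{j : \theta_j(w)=1\})$ from \eqref{eq:BS-def}, and then sandwich $\min(\supp\bs{w})$ between $n$ and $n$ using Corollary~\ref{cor:dc1}. Note that since $\supp\bs{w}$ is independent of the choice of reduced word $\bs{w}\in\RW(w)$, the statement is really about $w$ itself.

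First I would observe that $n\in\supp\bs{w}$ means $s_n\le w$, equivalently $\theta_n(w)=1$. Since $n\le 0\le 1$, Corollary~\ref{cor:dc1} applied to the hypothesis $\ol{c_{u,v}^w}\ne 0$ (which follows from our assumption, since if $c_{u,v}^w$ is interpreted as the back-stable coefficient then this is immediate, and if interpreted as the ordinary coefficient then the condition $w\notin S_{\Z_+}$ together with the relationship $c_{u,v}^w=\ol{c_{u,v}^w}$ on $S_{\Z_+}$ forces us into the back-stable setting) yields $\theta_j(w)=1$ for all $n\le j\le 1$. In particular $\min(\supp\bs{w})\le n$, so $BS(w)\ge 1-n$.

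For the reverse inequality, I would argue by contradiction: suppose some $m<n$ lies in $\supp\bs{w}$. Then $\theta_m(w)=1$, and since $m<n\le 0\le 1$, Corollary~\ref{cor:dc1} forces $\theta_j(w)=1$ for every $m\le j\le 1$. In particular $\theta_{n-1}(w)=1$, so $s_{n-1}\le w$ and $n-1\in\supp\bs{w}$, contradicting the hypothesis $n-1\notin\supp\bs{w}$. Hence $\min(\supp\bs{w})=n$ and $BS(w)=1-n$.

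There is no real obstacle here; the only subtlety is making sure the DC-triviality chain from Corollary~\ref{cor:dc1} reaches all the way down from the (hypothetical) minimum $m$ up to $n-1$ without a gap, which is exactly what Corollary~\ref{cor:dc1} guarantees. So the proof is really just a two-line application of Corollary~\ref{cor:dc1} combined with the explicit formula \eqref{eq:BS-def}.
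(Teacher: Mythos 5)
The reverse inequality is where your argument breaks. You assert that $m\in\supp\bs{w}$ (equivalently $s_m\le w$) is the same as $\theta_m(w)=1$, and your contradiction step uses the direction ``$m\in\supp\bs{w}\Rightarrow\theta_m(w)=1$''. That direction is false in general: only $\theta_m(w)=1\Rightarrow s_m\le w$ holds. For instance $w=s_2s_1$ (one-line notation $312$) has unique reduced word $\bs{w}=(2,1)$ and $\code(w)=(2,0,0,\ldots)$, so $2\in\supp\bs{w}$ while $\theta_2(w)=0$. What \eqref{eq:BS-def} gives is only that the \emph{minima} of $\supp\bs{w}$ and of $\{j\;|\;\theta_j(w)=1\}$ coincide (when $w\notin S_{\Z_+}$), not that the two sets agree. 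So from ``some $m<n$ lies in $\supp\bs{w}$'' you cannot conclude $\theta_m(w)=1$, and the Corollary~\ref{cor:dc1} chain up to $\theta_{n-1}(w)=1$ never gets started. (Your first inequality is fine, but the detour through $\theta_n(w)=1$ there is both unjustified and unnecessary: $BS(w)\ge 1-n$ is immediate from $n\in\supp\bs{w}$ and \eqref{eq:BS-def}.)

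The gap is repairable, in two ways. Closest to your plan: take $m=\min(\supp\bs{w})$; since $w\notin S_{\Z_+}$, \eqref{eq:BS-def} guarantees $\theta_m(w)=1$ for this particular $m$, and if $m\le n-1$ then Corollary~\ref{cor:dc1} (using $\ol{c_{u,v}^w}\ne 0$, as you correctly note the hypothesis must be read) gives $\theta_{n-1}(w)=1$, hence $s_{n-1}\le w$ and $n-1\in\supp\bs{w}$, a contradiction; so $\min(\supp\bs{w})=n$ and $BS(w)=1-n$. The paper instead argues in the opposite direction: it first shows directly that $n-1\notin\supp\bs{w}$ forces $\theta_{n-1}(w)=0$ (no letter of $\bs{w}$ equals $n-1$, so $w(j)>n-1$ for all $j>n-1$ while $w(n-1)\le n-1$), then applies Corollary~\ref{cor:dc1} in contrapositive form to get $\theta_{n'}(w)=0$ for all $n'<n-1$ as well, so $\min\{j\;|\;\theta_j(w)=1\}\ge n$; combining this with $n\in\supp\bs{w}$ via \eqref{eq:BS-def} yields $BS(w)=1-n$. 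Either repair works, but as written your key step uses the false converse.
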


Note that whether a given $n$ appears or not in $\bs{w}$ is an invariant of $w$; $n$ either appears in all reduced words for $w$ or none of them.

\begin{proof} $n-1 \notin \supp\bs{w}$ implies $\theta_{n-1}(w) = 0$ since for $j > n-1$, we have $w(j) = s_{\bs{w}_1}s_{\bs{w}_2}s_{\bs{w}_\ell(w)}(j)$, which can inductively seen to be greater than $n-1$, and we also have inductively $s_{\bs{w}_1}s_{\bs{w}_2}s_{\bs{w}_\ell(w)}(n-1) \leq n-1$.

By Corollary \ref{cor:dc1}, for all $n' < n-1$, we also have $\theta_{n'}(w) = 0$. $n < 1$ so $w \notin S_{\Z_+}$ so we can apply \eqref{eq:BS-def} to get
    \[
    \BS(w) = 1 - \min(\{j \;|\; \theta_j(w) = 1\}) \geq 1-n.
    \]
    Conversely, $n \in \supp\bs{w}$, so $\BS(w) = 1 - \min(\supp \bs{w}) \leq 1-n$, and we have equality.
\end{proof}

\subsection{Increasing suffixes}

Given a word $\bs{p} = (p_1, p_2, \ldots, p_k)$ and indices $1\le i\le j\le k$, let $\bs{p}_{i,j}$ be the contiguous subword $(p_i, p_{i+1},\cdots p_{j-1}, p_j)$ of $\bs{p}$ starting with $p_i$ and ending with $p_j$.

An \emph{increasing suffix} of $\bs{p}$ is a suffix of $\bs{p}$ that is (strictly) increasing. Let $\IncSuf(\bs{p})$ be the maximal-length increasing suffix of $\bs{p}$, which we will refer to simply as the increasing suffix of $\bs{p}$. The \textit{increasing suffix length} $I(\bs{p})$ of $\bs{p}$ is the length of its increasing suffix,\[I(\bs{p}) = \ell(\IncSuf(\bs{p})) = \max(\{i \;|\; p_{k-i+1} < p_{k-i+2} < \ldots < p_k\}).\] The increasing suffix of $\bs{p}$ can also be written $\IncSuf(\bs{p}) = \bs{p}_{k-I(\bs{p})+1, k}$. 

Given an integer $i\in \Z$, the \emph{increasing $i$-suffix} of $\bs{p}$ is 
\[
\IncSuf_i(\bs{p}) =
\begin{cases} \IncSuf(\bs{p}), & \text{if } \val(\bs{p}_k) \le i, \\ \emptyset, & \text{if } \val(\bs{p}_k) > i.
\end{cases}
\]
Its length, the \textit{increasing $i$-suffix length} of $\bs{p}$ is given by
\[
I_i(\bs{p}) = \ell(\IncSuf_i(\bs{p})) = \begin{cases}  I(\bs{p}), & \text{if } \val(\bs{p}_k) \le i, \\ 0, & \text{if } \val(\bs{p}_k) > i.
\end{cases}
\]

Increasing suffixes inform us about the slide polynomial decomposition of Schubert products. In particular, increasing $i$-suffix lengths are invariants of slide polynomials.

\begin{proposition} \label{prop:increasing-suffix-invariant}
    Given words $\bs{p}, \bs{q} \in \Zcol^*$, if $\bs{p} \equiv \bs{q}$, then for all $i$, $I_i(\bs{p}) = I_i(\bs{q})$.
\end{proposition}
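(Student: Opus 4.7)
The plan is to reduce the statement to two invariants of $m(\bs{p})$: the positions of increases and the value of the last entry. Since $\bs{p} \equiv \bs{q}$ means $m(\bs{p}) = m(\bs{q})$, both words lie in $\Zcol^k$ for the same $k$ (since $m(\bs{p}) \in \ZWInc^k$), so we may let $k = \ell(\bs{p}) = \ell(\bs{q})$.

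First, I would argue that the (unrestricted) increasing suffix length $I(\bs{p})$ depends only on the increase set of $\bs{p}$. Indeed, $I(\bs{p})$ is the largest integer $\ell$ such that $k-\ell, k-\ell+1, \ldots, k-1$ all lie in the increase set of $\bs{p}$, so it is determined by this set alone. By Lemma~\ref{lem:max-bottom-inc}, the increase sets of $\bs{p}$ and $m(\bs{p})$ agree, and similarly for $\bs{q}$; since $m(\bs{p}) = m(\bs{q})$, the increase sets of $\bs{p}$ and $\bs{q}$ agree, and hence $I(\bs{p}) = I(\bs{q})$.

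Next, observe that $\val(\bs{p}_k) = m(\bs{p})_k = m(\bs{q})_k = \val(\bs{q}_k)$ by the base case of the recursion \eqref{eq:max-bot-row}. Therefore, for any $i \in \Z$, the condition $\val(\bs{p}_k) \leq i$ holds if and only if $\val(\bs{q}_k) \leq i$, which is precisely the condition determining whether $\IncSuf_i(\bs{p})$ equals $\IncSuf(\bs{p})$ or $\emptyset$, and likewise for $\bs{q}$.

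Combining these two observations, $I_i(\bs{p})$ and $I_i(\bs{q})$ agree in each of the two cases of the defining piecewise formula. Since this argument is essentially a direct unpacking of the definitions together with Lemma~\ref{lem:max-bottom-inc}, there is no real obstacle; the proposition serves mainly to record that increasing $i$-suffix length descends to the bottom-row-equivalence classes, and hence to the slide polynomials themselves, setting up its use in subsequent arguments about Schubert products.
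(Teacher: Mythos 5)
Your proof is correct and follows essentially the same route as the paper: both arguments reduce the claim to Lemma~\ref{lem:max-bottom-inc} (the increase sets of a word and of its maximal bottom row coincide) together with the observation from \eqref{eq:max-bot-row} that the last entry of $m(\bs{p})$ equals $\val(\bs{p}_k)$, so the piecewise definition of $I_i$ is determined by $m(\bs{p})=m(\bs{q})$ alone. The only cosmetic difference is that the paper states the intermediate step as $I_i(\bs{p})=I_i(m(\bs{p}))$, whereas you compare $\bs{p}$ and $\bs{q}$ directly through their common maximal bottom row.
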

\begin{proof}
    $\bs{p} \equiv \bs{q}$ implies that $m(\bs{p}) = m(\bs{q})$, so we prove the result by showing that $\bs{p}$ has the same increasing $i$-suffix length as its maximal bottom row, that is, \[I_i(\bs{p}) = I_i(m(\bs{p})) \qquad\qquad \text{for all } i.\]

    This fact follows from Lemma \ref{lem:max-bottom-inc}, since $\bs{p}$ and $m(\bs{p})$ have increases at exactly the same entries, and moreover have the same final entry.
\end{proof}

For elements of $\Z^*$, we get the following stronger statement:
\begin{proposition} \label{prop:Z-star-increasing-suffix}
    For $\bs{p}, \bs{q} \in \Z^*$, if $\bs{p} \equiv \bs{q}$, then  $\IncSuf(\bs{p}) = \IncSuf(\bs{q})$.
\end{proposition}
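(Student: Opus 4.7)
The plan is to show that in the uncolored setting $\Z^*$, the last $I(\bs{p})$ entries of $m(\bs{p})$ coincide exactly with $\IncSuf(\bs{p})$; once this is established, the conclusion is immediate from $m(\bs{p}) = m(\bs{q})$.

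First I would observe two preliminaries. The recursive definition \eqref{eq:max-bot-row} preserves length, so $\bs{p}\equiv\bs{q}$ forces $\ell(\bs{p})=\ell(\bs{q})=:k$. Moreover, applying Proposition~\ref{prop:increasing-suffix-invariant} with $i$ sufficiently large (larger than all values appearing in $\bs{p}$ and $\bs{q}$) gives $I(\bs{p})=I(\bs{q})=:L$.

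The core step is the following claim: if $\bs{p}\in\Z^*$, then $m(\bs{p})_{k-L+j}=p_{k-L+j}$ for every $1\le j\le L$. I would prove this by backward induction on $j$. The base case $j=L$ is $m(\bs{p})_k=\val(p_k)=p_k$, using that entries of $\Z^*$ have color $1$ so that value equals entry. For the inductive step, suppose $m(\bs{p})_{k-L+j+1}=p_{k-L+j+1}$. Since both $p_{k-L+j}$ and $p_{k-L+j+1}$ lie in the increasing suffix, we have $p_{k-L+j}<p_{k-L+j+1}$ in $\Zcol$, which for color-$1$ entries is just $\val(p_{k-L+j})<\val(p_{k-L+j+1})$. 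Because values are integers, this strict inequality gives the gap $p_{k-L+j}\le p_{k-L+j+1}-1=m(\bs{p})_{k-L+j+1}-1$. The recursive formula then yields $m(\bs{p})_{k-L+j}=\min(\val(p_{k-L+j}),\,m(\bs{p})_{k-L+j+1}-1)=p_{k-L+j}$, completing the induction.

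Applying the claim to both $\bs{p}$ and $\bs{q}$ and invoking $m(\bs{p})=m(\bs{q})$,
\[
\IncSuf(\bs{p})=(m(\bs{p})_{k-L+1},\ldots,m(\bs{p})_k)=(m(\bs{q})_{k-L+1},\ldots,m(\bs{q})_k)=\IncSuf(\bs{q}),
\]
which is the claim. I do not anticipate a real obstacle: the argument is essentially an unwinding of the recursion for $m$, and the only feature that makes the $\Z^*$ case stronger than the general $\Zcol^*$ case (where only lengths, not the suffixes themselves, agree) is precisely the integer-gap property $p_j<p_{j+1}\Rightarrow p_j\le p_{j+1}-1$ for same-color entries.
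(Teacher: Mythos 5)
Your proof is correct and follows essentially the same route as the paper's: both reduce to showing that the last $I(\bs{p})$ entries of $m(\bs{p})$ agree with those of $\bs{p}$, proved by induction from the final entry using the recursion \eqref{eq:max-bot-row} together with the integer-gap property $p_j<p_{j+1}\Rightarrow p_j\le p_{j+1}-1$ available in $\Z^*$. The only cosmetic difference is that you invoke Proposition~\ref{prop:increasing-suffix-invariant} to equate suffix lengths, whereas the paper phrases the same fact as $\IncSuf(\bs{p})=\IncSuf(m(\bs{p}))$ and then chains through $m(\bs{p})=m(\bs{q})$.
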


\begin{proof}
    It suffices to show that $\IncSuf(\bs{p}) = \IncSuf(m(\bs{p}))$ because then $\IncSuf(\bs{p}) = \IncSuf(m(\bs{p})) = \IncSuf(m(\bs{q})) = \IncSuf(\bs{q})$.
    
    To prove this, we will show that for $0 \leq j < I(\bs{p})$, we have $m(\bs{p})_{k-j} = \bs{p}_{k-j}$. When $j=0$ we have by \eqref{eq:max-bot-row} that $m(\bs{p})_{k-0} = \val(\bs{p}_{k-0}) = \bs{p}_{k-0}$. Then for $0<j<I(\bs{w})$ we have $\bs{p}_{k-j} < \bs{p}_{k-j+1}$, and $\bs{p} \in \Z^*$, so this implies that $\bs{p}_{k-j} \leq \bs{p}_{k-j+1} - 1$. We can inductively assume $m(\bs{p}_{k-j+1}) = \bs{p}_{k-j+1}$, and so by \eqref{eq:max-bot-row}, we have $m(\bs{p})_{k-j} = \min(\val(\bs{p}_{k-j}), m(\bs{p})_{k-j+1} - 1) = \min(\bs{p}_{k-j}, \bs{p}_{k-j+1} - 1) = \bs{p}_{k-j}$.
\end{proof}

\begin{example}
    $3^{[1]}2^{[1]}2^{[2]} \equiv 312 \equiv 212$, and $\IncSuf(3^{[1]}2^{[1]}2^{[2]}) = 2^{[1]}2^{[2]}$, and $\IncSuf(312) = \IncSuf(212) = 12$. $I_i(3^{[1]}2^{[1]}2^{[2]}) = I_i(312) = I_i(212) = \begin{cases}
        2 & i \geq 2, \\
        0 & \text{otherwise}.
    \end{cases}$
\end{example}

Proposition~\ref{prop:Z-star-increasing-suffix} directly implies the following fact:
\begin{proposition} \label{prop:psi-maximal-inc-suf}
Let $\psi:\mathcal{M}\to\RW$ satisfy Proposition~\ref{prop:magic-map-exists}. For any $\bs{p} \in \mathcal{M}$, for all $i$,
\begin{equation}
    I_i(\bs{p}) = I_i(\psi(\bs{p})).
\end{equation}

Furthermore,
\begin{equation}
    \IncSuf(m(\bs{p})) = \IncSuf(\psi(\bs{p})).
\end{equation}
\end{proposition}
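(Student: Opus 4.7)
The proposal is to derive both claims as immediate consequences of the defining property of $\psi$ together with the two preceding results on increasing suffixes. By Proposition~\ref{prop:magic-map-exists}, $\psi$ is built from maximal-bottom-row-preserving bijections, so for every $\bs{p} \in \mathcal{M}$ we have $m(\psi(\bs{p})) = m(\bs{p})$, i.e.\ $\bs{p} \equiv \psi(\bs{p})$. This single observation is the entire engine of the proof.

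For the first identity, I would apply Proposition~\ref{prop:increasing-suffix-invariant} directly: since $I_i$ is an invariant of $\equiv$-equivalence classes and $\bs{p} \equiv \psi(\bs{p})$, we conclude $I_i(\bs{p}) = I_i(\psi(\bs{p}))$ for every $i$.

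For the second identity, the key point is that $\psi(\bs{p}) \in \RW \subseteq \Z^{*}$ is an uncolored word, and $m(\bs{p}) \in \ZWInc^{*} \subseteq \Z^{*}$ is also uncolored. The argument used to prove Proposition~\ref{prop:Z-star-increasing-suffix} in fact establishes the stronger intermediate statement that $\IncSuf(\bs{q}) = \IncSuf(m(\bs{q}))$ whenever $\bs{q} \in \Z^{*}$. Applying this to $\bs{q} = \psi(\bs{p})$ and combining with $m(\psi(\bs{p})) = m(\bs{p})$ yields
\[
\IncSuf(\psi(\bs{p})) = \IncSuf(m(\psi(\bs{p}))) = \IncSuf(m(\bs{p})),
\]
as desired.

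There is no genuine obstacle: both parts are formal consequences of results already in hand, and the only subtlety to flag is that the second identity uses the $\Z^{*}$-specific strengthening (Proposition~\ref{prop:Z-star-increasing-suffix}), which applies precisely because $\psi$ lands in $\RW$, whereas the first identity needs only the weaker colored-word invariant from Proposition~\ref{prop:increasing-suffix-invariant}.
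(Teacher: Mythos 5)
Your proof is correct and follows essentially the same route as the paper, which deduces the proposition directly from the preceding results: the commutativity $\phi\circ\psi=\phi$ forces $m(\psi(\bs{p}))=m(\bs{p})$, the first identity then follows from Proposition~\ref{prop:increasing-suffix-invariant}, and the second from the $\Z^*$ statement underlying Proposition~\ref{prop:Z-star-increasing-suffix} applied to $\psi(\bs{p})\in\RW$. The only nitpick is that an arbitrary $\psi$ satisfying Proposition~\ref{prop:magic-map-exists} need not literally be ``built from'' bijections; the cleaner justification is that the bottom triangle of the diagram already gives $\olF_{\psi(\bs{p})}=\olF_{\bs{p}}$, hence $\bs{p}\equiv\psi(\bs{p})$, which is all you use.
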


Given some set or multiset of words $X$, let the increasing $i$-suffix length of that set be $I_i(X) = \max_{\bs{p} \in X} I_i(\bs{p})$, the maximal $i$-suffix length of any word in $X$. Using our convention of  treating elements of $\N[\Zcol^*]$ as multisets, we see that increasing $i$-suffix length is an invariant of $\N$-linear sums of slide polynomials:

\begin{lemma} \label{lem:equiv-preserves-I}
    Given $\bsvec{a}, \bsvec{b} \in \N[\Zcol^*]$ with $\bsvec{a} \equiv \bsvec{b}$, we have $I_i(\bsvec{a}) = I_i(\bsvec{b})$ for all $i$.
\end{lemma}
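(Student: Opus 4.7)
The plan is to reduce the multiset statement to the single-word case handled by Proposition~\ref{prop:increasing-suffix-invariant}, using the bottom-row-preserving bijection that the paper has already constructed for elements of $\N[\Zcol^*]$.

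First I would recall the discussion immediately preceding Proposition~\ref{prop:multiset-bijection}: for $\bsvec{a}, \bsvec{b} \in \N[\Zcol^*]$, the equivalence $\bsvec{a}\equiv\bsvec{b}$ holds iff $T_{\bs{p}}\bsvec{a} = T_{\bs{p}}\bsvec{b}$ for every $\bs{p}\in \ZWInc^*$, and in that case there exists a multiset bijection $\psi:\bsvec{a}\to\bsvec{b}$ with $m(\psi(\bs{q})) = m(\bs{q})$ for every $\bs{q}\in\bsvec{a}$ (obtained by choosing a bijection $\psi|_{\bs{p}}$ between the fibers $P_{\bs{p}}\bsvec{a}$ and $P_{\bs{p}}\bsvec{b}$ for each $\bs{p}$ and then taking the union).

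Next, for each $\bs{q}\in\bsvec{a}$ we have $m(\psi(\bs{q}))=m(\bs{q})$, i.e.\ $\psi(\bs{q})\equiv\bs{q}$ as individual words. Proposition~\ref{prop:increasing-suffix-invariant} then gives $I_i(\psi(\bs{q})) = I_i(\bs{q})$ for every $i\in\Z$. Taking the maximum over the multiset $\bsvec{a}$ and using that $\psi$ is a multiset bijection,
\[
I_i(\bsvec{a}) \;=\; \max_{\bs{q}\in\bsvec{a}} I_i(\bs{q}) \;=\; \max_{\bs{q}\in\bsvec{a}} I_i(\psi(\bs{q})) \;=\; \max_{\bs{r}\in\bsvec{b}} I_i(\bs{r}) \;=\; I_i(\bsvec{b}),
\]
which is exactly the claim.

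There is essentially no obstacle here; the statement is a bookkeeping consequence of the two ingredients already in place (a bottom-row-preserving multiset bijection exists, and the increasing $i$-suffix length depends only on the maximal bottom row). The only point to be careful about is that $\bsvec{a},\bsvec{b}\in\N[\Zcol^*]$ so that interpreting them as multisets (and hence applying the bijection) is legitimate; this is exactly the hypothesis of the lemma.
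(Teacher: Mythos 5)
Your proof is correct and follows exactly the paper's argument: take the maximal-bottom-row-preserving multiset bijection guaranteed for bottom-row-equivalent elements of $\N[\Zcol^*]$, apply Proposition~\ref{prop:increasing-suffix-invariant} word by word, and pass to the maximum over the multisets. Nothing to add.
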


\begin{proof}
    Since $\bsvec{a} \equiv \bsvec{b}$, there exists a maximal-bottom-row preserving bijection $f: \bsvec{a} \rightarrow \bsvec{b}$, so
    \[I_i(\bsvec{a}) = \max_{\bs{p} \in \bsvec{a}} I_i(\bs{p}) = \max_{\bs{p} \in \bsvec{a}} I_i(f(\bs{p})) = \max_{\bs{q} \in \bsvec{b}} I_i(\bs{q}) = I_i(\bsvec{b}),\]
    where the second equality uses Proposition \ref{prop:increasing-suffix-invariant}.
\end{proof}

Increasing $i$-suffixes also behave well with respect to multiplication in $\N[\Zcol^*]$. In particular,

\begin{proposition} \label{prop:multiplication-adds-I}
    For  $\bsvec{a}, \bsvec{b} \in \N[\Zcol^*]$, $I_i(\bsvec{a}\bsvec{b}) = I_i(\bsvec{a}) + I_i(\bsvec{b})$
\end{proposition}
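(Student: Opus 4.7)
My plan is to reduce the statement to the case of single colored words $\bs{p}\in\bsvec{a}$ and $\bs{q}\in\bsvec{b}$. Since $I_i$ of a multiset is defined as a maximum, and $\bsvec{a}\bsvec{b}$ is the disjoint union (as multisets) of all colored shuffles $\bs{p}\bs{q}$ over pairs $(\bs{p},\bs{q})$, it suffices to establish the word-level identity $\max_{\bs{r}\in\bs{p}\bs{q}} I_i(\bs{r}) = I_i(\bs{p}) + I_i(\bs{q})$ and then take the maxima over pairs. Throughout I use the shorthand $c := \maxcol{\bs{p}}$, so that $\bs{p}\bs{q} = \bs{p}\shuffle(\bs{q}\uparrow c)$ and every entry of $\bs{q}\uparrow c$ has color strictly larger than every color appearing in $\bs{p}$.

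For the upper bound, I fix any shuffle $\bs{r}$ of $\bs{p}$ and $\bs{q}\uparrow c$, set $\ell = I_i(\bs{r})$, and examine the last $\ell$ letters of $\bs{r}$, which form a strictly increasing sequence in $\Zcol$ with final value at most $i$. I split these into $\ell_p$ entries coming from $\bs{p}$ and $\ell_q$ entries coming from $\bs{q}\uparrow c$. Because $\bs{p}$-entries appear in $\bs{r}$ in the same relative order as in $\bs{p}$, these $\ell_p$ entries are precisely the last $\ell_p$ entries of $\bs{p}$, form a strictly increasing subsequence, and (when $\ell_p > 0$) terminate at $\bs{p}_{\ell(\bs{p})}$, whose value is at most $i$. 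Hence $I_i(\bs{p}) \geq \ell_p$; the symmetric argument gives $I_i(\bs{q}) \geq \ell_q$, and the upper bound follows.

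For the lower bound, I set $a = I_i(\bs{p})$, $b = I_i(\bs{q})$, and explicitly construct a shuffle of $\bs{p}$ and $\bs{q}\uparrow c$ with $I_i$ at least $a + b$. The construction first lists $\bs{p}_1,\ldots,\bs{p}_{\ell(\bs{p})-a}$, then $(\bs{q}\uparrow c)_1,\ldots,(\bs{q}\uparrow c)_{\ell(\bs{q})-b}$, and finally performs the standard merge of the two increasing sequences $\IncSuf_i(\bs{p})$ and $\IncSuf_i(\bs{q}\uparrow c)$ into a single increasing sequence of length $a+b$. Its final letter is either $\bs{p}_{\ell(\bs{p})}$ or $(\bs{q}\uparrow c)_{\ell(\bs{q})}$, both of value at most $i$ whenever the corresponding parameter $a$ or $b$ is positive. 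The degenerate cases $a=0$ or $b=0$ reduce to a plain concatenation of $\bs{p}$ with $\bs{q}\uparrow c$ (in one order or the other), where the nontrivial suffix already contributes the correct length.

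The one step requiring real justification is the merge. Two strictly increasing sequences in $\Zcol$ interleave into a strictly increasing sequence exactly when they share no common letters, and this is where the color shift plays its role: an entry of $\IncSuf_i(\bs{p})$ and an entry of $\IncSuf_i(\bs{q}\uparrow c)$ of equal value are unequal in $\Zcol$ because their colors lie in disjoint ranges (at most $c$ versus greater than $c$), with the $\bs{p}$-entry strictly smaller in the $\Zcol$ order. Thus the merge is always feasible, which is essentially the reason the colored shuffle product was introduced in the first place.
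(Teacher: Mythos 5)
Your proposal is correct and follows essentially the same route as the paper's proof: reduce to single words $\bs{p},\bs{q}$, prove the upper bound by observing that the $\bs{p}$-entries (resp.\ $\bs{q}$-entries) of the increasing $i$-suffix of any shuffle form an increasing $i$-suffix of $\bs{p}$ (resp.\ $\bs{q}$), and prove the lower bound by concatenating the two prefixes and merging $\IncSuf_i(\bs{p})$ with $\IncSuf_i(\bs{q}\uparrow \maxcol{\bs{p}})$ into the unique increasing shuffle. The only difference is that you spell out why the merge is strictly increasing (disjoint color ranges), a point the paper leaves implicit.
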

\begin{proof}
        It suffices to show that for $\bs{p}, \bs{q} \in \Zcol^*$, $I_i(\bs{p}\bs{q}) = I_i(\bs{p}) + I_i(\bs{q})$; taking maximums over $\bs{p}\in\bsvec{a}$ and $\bs{q}\in\bsvec{b}$ gives the result.
        
        Write $\bs{p} = \bs{c} \circ \IncSuf_i(\bs{p})$, and  $\bs{q} = \bs{d} \circ \IncSuf_i(\bs{q})$. There is exactly one shuffled word $\bs{e}$ in the product $\IncSuf_i(\bs{p})\IncSuf_i(\bs{q})$ which is increasing. Then $\bs{c} \circ \bs{d} \circ \bs{e}$ is a shuffled word in $\bs{p}\bs{q}$ whose increasing $i$-suffix has length $\geq I_i(\bs{p}) + I_i(\bs{q})$, which establishes $I_i(\bs{p}\bs{q}) \geq I_i(\bs{p}) + I_i(\bs{q})$.

        To show the other inequality, consider any $\bs{r} \in \bs{p} \bs{q}$. The $\bs{p}$-entries of the increasing $i$-suffix of $\bs{r}$ are an increasing $i$-suffix of $\bs{p}$, and similarly the $\bs{q}$-entries of the increasing $i$-suffix of $\bs{r}$ are an increasing $i$-suffix of $\bs{q}$. So, $I_i(\bs{r}) \leq I_i(\bs{p}) + I_i(\bs{q})$.
\end{proof}

Combining Lemma \ref{lem:equiv-preserves-I} and Proposition \ref{prop:multiplication-adds-I}, we have
\begin{theorem} \label{thm:increasing-suffix-product}
    For all $i \in \Z$, we have 
    \[
    \max_{w \;|\; \ol{c_{u,v}^w} \ne 0}I_i(\RW(w)) = I_i(\RW(u)) + I_i(\RW(v)).
    \] In particular, if for some $i$, $I_i(\RW(w)) > I_i(\RW(u)) + I_i(\RW(v))$, then $\ol{c_{u,v}^w}=0$.
\end{theorem}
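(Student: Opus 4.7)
The plan is to assemble this theorem directly from the two preceding results, Lemma~\ref{lem:equiv-preserves-I} and Proposition~\ref{prop:multiplication-adds-I}, together with the bottom-row equivalence \eqref{eq:Schub-word-equivalence} between $\Schvec(u)\Schvec(v)$ and $\sum_w \ol{c_{u,v}^w}\Schvec(w)$. The key observation is that both sides of \eqref{eq:Schub-word-equivalence} lie in $\N[\Zcol^*]$ (the left side because it is a positive colored shuffle product, the right side by nonnegativity of the back-stable Schubert structure constants), so Lemma~\ref{lem:equiv-preserves-I} is applicable.

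Concretely, I would first invoke Proposition~\ref{prop:multiplication-adds-I} applied to $\bsvec{a} = \Schvec(u)$ and $\bsvec{b} = \Schvec(v)$ to get
\[
I_i\bigl(\Schvec(u)\Schvec(v)\bigr) \;=\; I_i(\Schvec(u)) + I_i(\Schvec(v)) \;=\; I_i(\RW(u)) + I_i(\RW(v)),
\]
using the multiset identification $\Schvec(w) = \RW(w)$. Next, by Corollary~\ref{cor:S-Schub-prod-is-expansion} we have $\Schvec(u)\Schvec(v) \equiv \sum_w \ol{c_{u,v}^w}\Schvec(w)$, so Lemma~\ref{lem:equiv-preserves-I} gives
\[
I_i\bigl(\Schvec(u)\Schvec(v)\bigr) \;=\; I_i\Bigl(\sum_w \ol{c_{u,v}^w}\Schvec(w)\Bigr).
\]
Since $I_i$ of a multiset is the maximum of $I_i$ over its elements, and only terms with $\ol{c_{u,v}^w} \ne 0$ actually contribute, the right-hand side equals $\max_{w\,:\,\ol{c_{u,v}^w}\ne 0} I_i(\Schvec(w)) = \max_{w\,:\,\ol{c_{u,v}^w}\ne 0} I_i(\RW(w))$. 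Stringing these equalities together yields the stated identity.

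The ``in particular'' clause is then immediate: if $I_i(\RW(w)) > I_i(\RW(u)) + I_i(\RW(v))$ for some $w$, then $w$ cannot achieve the maximum on the left (which equals the smaller quantity on the right), and hence $w$ cannot appear in the index set of the max, i.e., $\ol{c_{u,v}^w} = 0$. I do not anticipate a real obstacle here, as all of the heavy lifting has been carried out in the two preceding results; the only thing to verify is the compatibility of the multiset conventions used in $\Schvec(w)$ and $\RW(w)$, which is already established in the paragraph preceding Proposition~\ref{prop:magic-map-exists}.
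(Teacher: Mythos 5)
Your proposal is correct and follows essentially the same route as the paper: apply Proposition~\ref{prop:multiplication-adds-I} to $\Schvec(u)\Schvec(v)$, use positivity of the structure constants to place $\sum_w \ol{c_{u,v}^w}\Schvec(w)$ in $\N[\Zcol^*]$, and then invoke Lemma~\ref{lem:equiv-preserves-I} via the equivalence \eqref{eq:Schub-word-equivalence}. The identification of $I_i$ of the right-hand multiset with $\max_{w\,:\,\ol{c_{u,v}^w}\ne 0} I_i(\RW(w))$ is exactly how the paper concludes as well.
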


\begin{proof}
    We have
    \[
    \max_{w \;|\; \ol{c_{u,v}^w} \ne 0}I_i(\RW(w)) = I_i\left(\sum_w \ol{c_{u,v}^w} \Schvec(w)\right) = I_i\left(\Schvec(u)\Schvec(v)\right) = I_i(\RW(u)) + I_i(\RW(v)),
    \]
    where the first equality is by definition, and the third is by Proposition \ref{prop:multiplication-adds-I}. For the second equality, $\Schvec(u)\Schvec(v) \in \N[\Zcol^*]$ and using the positivity of the Schubert structure constants, $\sum_w \ol{c_{u,v}^w} \Schvec(w) \in \N[\Zcol^*]$, so we can use Lemma \ref{lem:equiv-preserves-I}.
\end{proof}

\subsection{Increasing suffixes of reduced words} \,

The previous result suggests the following definitions:
\[
I(w) = I(\Schvec(w)), \qquad\qquad I_i(w) = I_i(\Schvec(w)),
\]
and
\[
I(u,v) = I(\Schvec(u)\Schvec(v)), \qquad\qquad I_i(u,v) = I_i(\Schvec(u)\Schvec(v)).
\]
$I(w)$ is the length of the longest increasing suffix in any reduced word for $w$, $I(u,v)$ is the length of the longest increasing suffix in any reduced word of any permutation that appears in the Schubert product $\ol{\Sch}_u\ol{\Sch}_v$, and similarly for $I_i(w)$ and $I_i(u,v)$.

To prove Theorem~\ref{thm:backstabilization-thm}, we carefully analyze the increasing suffixes of words in $\Schvec(u)\Schvec(v)$ and in $\sum_w \ol{c_{u,v}^w} \Schvec(w)$. 

Increasing suffixes of reduced words have a close relationship with the Lehmer code. We have the following properties:

\begin{proposition} \label{prop:reducedIncreasingSequences}
    Given permutation $w$ with $\ell(w) = k$,
    \begin{enumerate}
        \item[(a)] For $\bs{w} \in \RW(w)$, if $j$ is in $\IncSuf(\bs{w})$ then $\theta_j(w) = 1$.
        \item[(b)] For $i \in \mathbb{Z}$, $I_i(w) \leq \lambda_i(w)$.
        \item[(c)] For $i \in \mathbb{Z}$, if $\theta_i(w) = 0$, then $I_{i-1}(w) = I_i(w) = \lambda_i(w)$.
    \end{enumerate}
\end{proposition}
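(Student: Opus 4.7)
For (a), I will analyze the ``suffix permutation'' $v = s_{b_1} s_{b_2} \cdots s_{b_I}$, where $(b_1 < b_2 < \cdots < b_I) = \IncSuf(\bs{w})$, and write $w = uv$ with $\ell(w) = \ell(u) + I$. The first step is an explicit cycle decomposition of $v$: grouping $(b_1, \ldots, b_I)$ into maximal runs of consecutive integers, each run $b_p, b_p+1, \ldots, b_q$ (of length $r = q-p+1$) contributes the cycle $b_p \to b_p+1 \to \cdots \to b_p+r \to b_p$, and different runs contribute disjoint cycles (since $s_{b_j}$'s in distinct runs commute and each intra-run product can be computed directly). Reading off $v$ gives $w(b_m) = u(b_m + 1)$ for $b_m$ in the run $b_p, \ldots, b_q$, and $w(b_q + 1) = u(b_p)$. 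The second step exploits reducedness: setting $u_j = u \cdot s_{b_1} \cdots s_{b_j}$, the length-increasing condition $u_{j-1}(b_j) < u_{j-1}(b_j + 1)$, once unwound through the swap pattern within a run, yields $u(b_p) < u(b_p + i)$ for every $i = 1, \ldots, r$. Since $b_m + 1 \in \{b_p+1, \ldots, b_p+r\}$ and $b_q + 1 > b_m$, we conclude $w(b_q + 1) < w(b_m)$, which forces $\theta_{b_m}(w) = 1$.

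Part (b) will follow as a direct corollary of (a): if $\val(\bs{w}_k) > i$ then $I_i(\bs{w}) = 0 \leq \lambda_i(w)$; otherwise the entries of $\IncSuf(\bs{w})$ are $I_i(\bs{w})$ distinct integers in $(-\infty, i]$, each contributing a nonzero row to the Lehmer code of $w$ by (a). Taking the maximum over $\bs{w} \in \RW(w)$ gives $I_i(w) \leq \lambda_i(w)$.

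For (c), I will split the equality $I_{i-1}(w) = I_i(w) = \lambda_i(w)$ into two parts. The first equality is immediate: $\theta_i(w) = 0$ forces no right descent of $w$ at $i$ (Lemma~\ref{lem:desdefs}), so $\val(\bs{w}_k) \neq i$ for every $\bs{w} \in \RW(w)$, making $I_i(\bs{w}) = I_{i-1}(\bs{w})$. For $I_i(w) \geq \lambda_i(w)$, write $\{j \leq i : \theta_j(w) = 1\} = \{b_1 < \cdots < b_\ell\}$; note $b_\ell < i$ and $\theta_j(w) = 0$ for $b_\ell < j \leq i$. I plan to construct a reduced word for $w$ ending in the increasing string $(b_1, \ldots, b_\ell)$ by peeling off simple reflections from the right. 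Setting $w^{(0)} = w$ and $w^{(k+1)} = w^{(k)} \cdot s_{b_{\ell - k}}$, I will prove inductively via Lemma~\ref{lem:desdefs}(c) the two code invariants: (i) $\text{code}(w^{(k)})_j = \text{code}(w)_j$ for all $j < b_{\ell - k + 1}$, and (ii) $\text{code}(w^{(k)})_{b_{\ell - k + 1}} = 0$ (vacuous at $k=0$). Together these force $\text{code}(w^{(k)})_{b_{\ell - k}} > 0 = \text{code}(w^{(k)})_{b_{\ell - k} + 1}$, so Lemma~\ref{lem:desdefs}(b) supplies the needed descent at $b_{\ell - k}$. Concatenating any reduced word for $w^{(\ell)}$ with $(b_1, \ldots, b_\ell)$ yields a reduced word for $w$ whose increasing $i$-suffix has length at least $\lambda_i(w)$, and combining with (b) closes the argument.

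The main obstacle I expect is the bookkeeping in part (a) when $(b_1, \ldots, b_I)$ contains runs of consecutive integers: both the cycle structure of $v$ and the chain of reducedness inequalities $u(b_p) < u(b_p + i)$ must be interleaved carefully, because the swaps within a single run propagate the value $u(b_p)$ across several positions. The code-tracking inductive step in part (c) is delicate to state but largely mechanical once the invariants are correctly formulated.
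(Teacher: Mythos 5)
Your plan is correct, and the interesting divergence is in part (a). The paper proves (a) by induction on $\ell(w)$: strip the last letter $\bs{w}_k$, note it is a descent so $\code(w)_{\bs{w}_k}>0$, apply the inductive hypothesis to the reduced word for $ws_{\bs{w}_k}$, and use Lemma~\ref{lem:desdefs}(c) to transfer the nonzero code entries (all at indices $<\bs{w}_k$) back to $w$. You instead analyze the suffix permutation $v=s_{b_1}\cdots s_{b_I}$ directly: the cycle decomposition of $v$ by maximal consecutive runs, the identities $w(b_m)=u(b_m+1)$, $w(b_q+1)=u(b_p)$, and the reducedness inequalities $u(b_p)<u(b_p+i)$ all check out (the runs are separated by gaps of at least $2$, so earlier and later runs do not disturb the relevant positions), and they exhibit an explicit inversion $(b_m,\,b_q+1)$ witnessing $\theta_{b_m}(w)=1$. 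Your route is more computational but yields strictly more information -- it locates a concrete inversion for each suffix letter -- whereas the paper's induction is shorter and leans entirely on Lemma~\ref{lem:desdefs}. Part (b) is identical in both treatments. For (c), the paper inducts on $\lambda_i(w)$, peeling a single reflection at $j=\max\{\alpha\le i : \code(w)_\alpha>0\}$ and using Lemma~\ref{lem:desdefs}(c); your version is the same argument unrolled into an explicit iteration over $b_\ell,b_{\ell-1},\ldots,b_1$ with code-tracking invariants, and the invariants you state do close the loop (note that at each stage the vanishing of $\code(w^{(k)})_{b_{\ell-k}+1}$ uses, besides your invariant (i), that indices strictly between consecutive $b_j$'s and $\le i$ carry zero code by the very definition of the $b_j$'s -- worth saying explicitly, but not a gap). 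Only minor bookkeeping remains, e.g.\ handling $\lambda_i(w)=0$ vacuously and interpreting $b_{\ell+1}$ at the start of the iteration.
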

\begin{proof} \;
    \begin{enumerate}
        \item[(a)] Induction on $\ell(w)$. The base case is when $\ell(w) = 1$, and the claim is clear.
        
        If $\bs{w} = \bs{w}_1, \ldots, \bs{w}_k$, then $w$ has a descent at $\bs{w}_k$, so by Lemma \ref{lem:desdefs}(b) $\text{code}(w)_{\bs{w}_k} > \text{code}(w)_{\bs{w}_k+1} \geq 0$. If $\bs{w}_{k-1} \geq \bs{w}_k$, then we are done since $\bs{w}_k$ is the entire increasing suffix. Assume we are in the other case where $\bs{w}_{k-1} < \bs{w}_k$. We can use the inductive hypothesis on $\bs{w}_1, \ldots, \bs{w}_{k-1}$, which is a reduced word for $w s_{\bs{w}_k}$, to show that for any $j$ in $\IncSuf(\bs{w}_1, \ldots, \bs{w}_{k-1})$,  $\text{code}(w s_{\bs{w}_k})_j > 0$. Note that in this case any $j$ in $\IncSuf(\bs{w}_1, \ldots, \bs{w}_{k-1})$ is less than $\bs{w}_{k}$, so $\text{code}(w)_j = \text{code}(w s_{\bs{w}_k})_j > 0$ by Lemma \ref{lem:desdefs}(c). Any $j$ in $\IncSuf(\bs{w}_1, \ldots, \bs{w}_{k})$ is either $\bs{w}_k$ or is in $\IncSuf(\bs{w}_1, \ldots, \bs{w}_{k-1})$, so we have proved the claim.

        \item[(b)] For any $\bs{w} \in \RW(w)$, if $I_i(\bs{w}) = 0$, then $I_i(\bs{w}) \leq \lambda_i(w)$ since the latter is nonnegative. If $I_i(\bs{w})>0$, then $\bs{w}_k \leq i$, so for each $j$ in $\IncSuf(\bs{w})$, $j \leq i$ and $\theta_j(w) = 1$ by part (a). Thus, $I_i(\bs{w}) = \ell(\IncSuf(\bs{w})) \leq \sum_{k={-\infty}}^i \theta_k = \lambda_i$.
    
        \item[(c)] If $\theta_i(w) = \text{code}(w)_i = 0$, then $w$ does not have a descent at $i$, so no reduced words for $w$ end in $i$, so $I_{i-1}(w) = I_i(w)$. Now we just have to show that $I_i(w) = \lambda_i(w)$. From part (b), we already have $I_i(w) \leq \lambda_i(w)$, so we just have so establish $I_i(w) \geq \lambda_i(w)$. To do this we induct over $\lambda := \lambda_i(w)$.
        
        The base case is if $\lambda = 0$, in which case the statement is clear. Otherwise let $j = \max(\{\alpha \leq i \;|\; \text{code}(w)_{\alpha} > 0\})$. By the maximality of $j$, $\text{code}(w)_{j} > 0 = \text{code}(w)_{j+1}$ (in particular, $\lambda = \lambda_j(w)$). Thus, $w$ has a descent at $j$, so by Lemma \ref{lem:desdefs}(c), $\text{code}(ws_j)_j = \text{code}(w)_{j+1} = 0$, and for all $j' < j$, $\text{code}(ws_j)_{j'} = \text{code}(w)_{j'}$. Thus, we can apply the inductive hypothesis and conclude that
        \[
        I_j(ws_j) = \lambda_j(ws_j) = \lambda_j(w) - 1 = \lambda - 1.
        \]
        Let $\bs{p} \in \RW(ws_j)$ be a word that achieves $I_j(\bs{p}) = I_j(ws_j) = \lambda-1$. Then since $j\le i$, the word $\bs{p}_1, \ldots, \bs{p}_{k-1}, j$ is a reduced word for $w$ which has an increasing $i$-suffix length equal to $I_j(ws_j) + 1 = \lambda$, so $I_i(w) \geq  \lambda$ as desired.\qedhere
    \end{enumerate}
\end{proof}

Taking $i$ to be large, we obtain the following corollary of Theorem \ref{thm:increasing-suffix-product} and Proposition \ref{prop:reducedIncreasingSequences}:

\begin{corollary} \label{cor:lehmer-nonzero-relationship}
    If $\ol{c_{u,v}^w} \neq 0$, then
    \[
    \lambda_\infty(w) \leq \lambda_\infty(u) + \lambda_\infty(v)
    \]
    i.e. the number of nonzero rows in the Lehmer code of $w$ is less than or equal to the number of nonzero rows in $u$ plus the number of nonzero rows in the Lehmer code of $v$.
\end{corollary}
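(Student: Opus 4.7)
The plan is to reduce the claim to Theorem \ref{thm:increasing-suffix-product} applied at a single, sufficiently large index $i$. The idea is to pick $i$ so large that the three permutations $u$, $v$, $w$ have already used up all nonzero rows of their Lehmer codes at or below $i$, and additionally $\theta_i$ vanishes for all three; then $I_i(\cdot)$ and $\lambda_\infty(\cdot)$ coincide for each of them, and the inequality becomes immediate from the increasing-suffix bound on structure constants.

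First, I would fix $i$ large enough so that $\theta_i(u) = \theta_i(v) = \theta_i(w) = 0$. This is possible because $u, v \in S_{\Z_+}$ and $w \in S_\Z$, so each has only finitely many nonzero entries in its Lehmer code; in particular $\lambda_i(x) = \lambda_\infty(x)$ for $x \in \{u, v, w\}$. Since each $\theta_i$ vanishes, Proposition \ref{prop:reducedIncreasingSequences}(c) applied separately to $u$, $v$, and $w$ gives $I_i(u) = \lambda_\infty(u)$, $I_i(v) = \lambda_\infty(v)$, and $I_i(w) = \lambda_\infty(w)$. Finally, since $\ol{c_{u,v}^w} \neq 0$, Theorem \ref{thm:increasing-suffix-product} yields $I_i(w) \leq I_i(u) + I_i(v)$, and substituting the identifications above produces $\lambda_\infty(w) \leq \lambda_\infty(u) + \lambda_\infty(v)$.

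The argument is essentially a direct assembly of the two cited results, so there is no substantive obstacle. The only point requiring care is ensuring that a single $i$ works simultaneously for all three permutations, which poses no difficulty since each Lehmer code has finite support; any $i$ exceeding the largest index at which $u$, $v$, or $w$ acts nontrivially will suffice. It is worth noting that an implicit input is that $w$ genuinely lies in $S_\Z$ (not some larger set), which is part of the setup for back-stable structure constants.
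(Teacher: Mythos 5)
Your proof is correct and is essentially the paper's own argument: the paper derives the corollary by ``taking $i$ to be large'' in Theorem~\ref{thm:increasing-suffix-product} together with Proposition~\ref{prop:reducedIncreasingSequences}, which is exactly what you spell out by choosing $i$ beyond the supports of the Lehmer codes of $u$, $v$, $w$ so that $I_i=\lambda_i=\lambda_\infty$ for each. No gaps; your elaboration of why a single such $i$ exists is the only detail the paper leaves implicit.
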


This gives a simple criteria to determine the vanishing of some Schubert structure constants. See \cite{dizier2022generalized} for an overview of existing results on the topic.

Different reduced words of $w$ may have different increasing $i$-suffixes, but it turns out that maximal increasing $i$-suffixes are unique.

\begin{corollary}
\label{cor:max-inc-suffix}
Let $w\in S_\Z$.

\begin{enumerate}
    \item[(a)] There is a unique maximal-length increasing suffix for $w$, which is given by $s_{i_1}\cdots s_{i_m}$, where $i_1<\ldots<i_m$ are the values $i$ where $\theta_i(w) = 1$.
    \item[(b)] Every increasing suffix for $w$ is a subword of the maximal increasing suffix.
\end{enumerate}
\end{corollary}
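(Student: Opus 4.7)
The plan is to combine the three parts of Proposition \ref{prop:reducedIncreasingSequences}, which already do the heavy lifting. First I would use part (a) to observe that for any $\bs{w}\in\RW(w)$, the entries of $\IncSuf(\bs{w})$ form a strictly increasing sequence of indices from the set $\{i_1,\ldots,i_m\}$. This immediately yields $I(w) \leq m$ and, more strongly, shows that every increasing suffix of every reduced word for $w$ is a strictly increasing subsequence of $(i_1,\ldots,i_m)$, i.e., a subword of $s_{i_1}\cdots s_{i_m}$. Hence part (b) will drop out as soon as (a) is in hand.

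For (a) it remains to exhibit a reduced word for $w$ whose increasing suffix has length exactly $m$. I would pick an integer $N > i_m$, so that $\theta_N(w)=0$ and $\lambda_N(w) = m$, and apply Proposition \ref{prop:reducedIncreasingSequences}(c) to conclude $I_N(w)=m$. On the other hand, the last entry of any reduced word for $w$ is a right descent of $w$, and by Lemma \ref{lem:desdefs} every descent position $i$ satisfies $\theta_i(w)=1$, so lies in $\{i_1,\ldots,i_m\}$; in particular each such last entry is at most $i_m < N$. This forces $I_N(\bs{w}) = I(\bs{w})$ for every $\bs{w}\in\RW(w)$, and hence $I(w) = I_N(w) = m$. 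Taking any reduced word $\bs{w}$ which realizes this maximum, the first paragraph forces $\IncSuf(\bs{w})$ to be a strictly increasing length-$m$ sequence drawn from the $m$-element set $\{i_1,\ldots,i_m\}$, so it must equal $(i_1,\ldots,i_m)$. This simultaneously gives existence and uniqueness.

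Since the argument is essentially bookkeeping on top of the already-established Proposition \ref{prop:reducedIncreasingSequences}, I do not anticipate any substantive obstacle. The only mildly delicate point is the transition from $I_N(w)$ to $I(w)$, which is handled by the Lemma \ref{lem:desdefs} observation that the terminal entries of reduced words are descent positions, and therefore uniformly bounded above by $i_m$.
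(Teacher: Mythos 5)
Your proposal is correct and follows the paper's own route: the paper proves this corollary exactly by invoking Proposition~\ref{prop:reducedIncreasingSequences}, parts (a) and (c), with $i$ taken large, which is precisely the bookkeeping you carry out. Your explicit handling of the passage from $I_N(w)$ to $I(w)$ via descent positions is a fine (and correct) elaboration of the paper's ``taking $i$ to be large'' step.
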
 

\begin{proof}
    This follows from Proposition \ref{prop:reducedIncreasingSequences}, parts (a) and (c), taking $i$ to be large in (c).
\end{proof}

We also make the following observation. Let $w\in S_\Z$ and let $\bs{p}$ be the maximal increasing suffix of $w$. $\bs{p}$ has the form
\[
\bs{p} = (i_1, i_1+1, \ldots, i_1+k_1, i_2, i_2+1,\ldots, i_2+k_2, \ldots, i_t, i_t+1, \ldots, i_t+k_t),
\]
where $i_j \ge i_{j-1} + k_{j-1} + 2$. Write $\bs{q}^{(j)} = (i_j, i_j+1,\ldots, i_j+k_j)$.
Then since $s_is_j = s_js_i$ when $|i-j|\ge 2$, every word $\bs{q}^{(\sigma(1))}\circ \bs{q}^{(\sigma(2))}\circ\cdots\circ \bs{q}^{(\sigma(t))}$ consisting of the $\bs{q}^{(j)}$ in any order is a suffix of $w$.
Furthermore, every subword of $\bs{p}$ consisting of some of the $\bs{q}^{(j)}$ is an increasing suffix for $w$.

\begin{example}
Consider the permutation $w\in S_{\Z_+}$ with one-row notation $2136574$. Its Lehmer code is $(1,0,0,2,1,1,0)$. Then the maximal increasing suffix for $w$ is $\bs{p} = s_1s_4s_5s_6$, and every increasing suffix of $w$ is a subword of $\bs{p}$. In particular, $\bs{q}^{(1)} = s_1$ and $\bs{q}^{(2)} = s_4s_5s_6$ commute, and so $s_1$ is an increasing suffix of $w$.
\end{example}

Not every subword of the maximal increasing suffix for $w$ is itself an increasing suffix for $w$. A careful analysis of the Lehmer code might lead to a characterization, but this is outside the scope of the present paper.

\begin{problem}
    Characterize all increasing suffixes of a given permutation $w\in S_\Z$.
\end{problem}

The question of which reduced words have maximal increasing suffixes is also interesting. This word is usually not unique, but one might ask whether any combinatorially interesting words $\bs{w}$ have $I(\bs{w}) = I(w)$. One such word is given recursively by the following definition. Let $\IncSuf(w)$ be the maximal-length increasing suffix for $w$, and also treat $\IncSuf(w)$ as the permutation it is a reduced word for. Then the word $\text{MaxIncWord}(w)$ defined by
\[
\text{MaxIncWord}(w) = \text{MaxIncWord}(w(\IncSuf(w))^{-1})\circ\IncSuf(w)
\]
satisfies $I(\bs{w}) = I(w)$, and a similar property holds for any any prefix $\bs{w'}$ of $\bs{w}$ obtained by repeatedly stripping off maximal increasing suffixes. We expect that this word has been previously studied, but are unaware of a source.

\subsection{Proof of the Back-stabilization Theorem}

\begin{proposition} \label{prop:incmerge}
    Let $u,v \in S_{\Z_+}$ with $\ell(u) + \ell(v) = k$.
    \begin{enumerate}
        \item[(a)] $\max_{i \geq 0}(I_i(u,v) - i) = \max_{i \geq 0}(\lambda_i(u) + \lambda_i(v) - i)$
        \item[(b)] Let $i \geq 0$ be such $I_i(u,v) - i$ attains its maximum. Let $\bs{p} \in \Schvec(u)\Schvec(v)$ with $I_i(\bs{p}) = I_i(u,v)$. Then, either $\IncSuf_i(m(\bs{p})) = \emptyset$ or $\IncSuf_i(m(\bs{p})) = (i-I(\bs{p})+1, i-I(\bs{p})+2, \ldots, i)$.
    \end{enumerate}
\end{proposition}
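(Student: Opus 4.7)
The plan for (a) is a sandwich argument. Combining Proposition~\ref{prop:multiplication-adds-I} with Proposition~\ref{prop:reducedIncreasingSequences}(b) gives $I_i(u,v) = I_i(u) + I_i(v) \leq \lambda_i(u) + \lambda_i(v)$ for every $i \geq 0$, yielding the $\leq$ direction termwise. For $\geq$, let $M := \max_{i \geq 0}(\lambda_i(u) + \lambda_i(v) - i)$ and take $i^*$ to be the \emph{largest} nonnegative integer attaining $M$. Since incrementing $i$ changes the expression by $\theta_{i+1}(u) + \theta_{i+1}(v) - 1$, maximality at $i^*$ forces $\theta_{i^*+1}(u) = \theta_{i^*+1}(v) = 0$, and Proposition~\ref{prop:reducedIncreasingSequences}(c) applied at $i^*+1$ then yields $I_{i^*}(u) = \lambda_{i^*+1}(u) = \lambda_{i^*}(u)$ and similarly for $v$, whence $I_{i^*}(u,v) - i^* = M$.

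For (b), the hypothesis $I_i(u,v) - i = M$ combined with part (a) collapses all these inequalities at the chosen $i$, so $L := I_i(u,v) = \lambda_i(u) + \lambda_i(v)$ with $I_i(u) = \lambda_i(u)$ and $I_i(v) = \lambda_i(v)$. Write $\bs{p}$ as a colored shuffle of $\bs{u} \in \RW(u)$ and $\bs{v} \uparrow \maxcol{\bs{u}}$ for some $\bs{v} \in \RW(v)$, and suppose $\IncSuf_i(\bs{p})$ is nonempty. Because shuffles preserve the internal order of $\bs{u}$ and of $\bs{v}$, the $\bs{u}$-entries falling in the last $L$ positions of $\bs{p}$ form a suffix of $\bs{u}$, and analogously for $\bs{v}$; both are strictly increasing with values $\leq i$, so their lengths are bounded by $I_i(\bs{u}) \leq I_i(u)$ and $I_i(\bs{v}) \leq I_i(v)$. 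The sum of these lengths is $L = I_i(u) + I_i(v)$, forcing equality throughout. A cardinality count using Proposition~\ref{prop:reducedIncreasingSequences}(a) then identifies the $\bs{u}$-values in $\IncSuf_i(\bs{p})$ as $\{j \leq i : \theta_j(u) = 1\}$ and the $\bs{v}$-values as $\{j \leq i : \theta_j(v) = 1\}$; the strict increase in $\IncSuf_i(\bs{p})$ makes these two sets disjoint.

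The main obstacle is pinning down the values $v_1 < \cdots < v_L$ in $\IncSuf_i(\bs{p})$; the plan is to show they equal $i-L+1, \ldots, i$. For each $1 \leq j \leq L$, precisely $v_1, \ldots, v_j$ of our $L$ values lie in $(-\infty, v_j]$, so $\lambda_{v_j}(u) + \lambda_{v_j}(v) = j$; the maximality inequality $\lambda_{v_j}(u) + \lambda_{v_j}(v) - v_j \leq M = L - i$ gives $v_j \geq i-L+j$, and then strict increase paired with $v_L \leq i$ forces $v_j = i-L+j$ by downward induction starting at $v_L = i$. Applying the recursion \eqref{eq:max-bot-row} along the last $L$ positions of $\bs{p}$, starting from $m(\bs{p})_k = v_L = i$, gives $m(\bs{p})_{k-L+j} = v_j = i-L+j$; Lemma~\ref{lem:max-bottom-inc} prevents the increasing suffix of $m(\bs{p})$ from extending beyond length $L$, so $\IncSuf_i(m(\bs{p})) = (i-L+1, \ldots, i) = (i - I(\bs{p}) + 1, \ldots, i)$. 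When $\IncSuf_i(\bs{p}) = \emptyset$, the last entry of $\bs{p}$ has value exceeding $i$, hence $m(\bs{p})_k = \val(\bs{p}_k) > i$ and $\IncSuf_i(m(\bs{p})) = \emptyset$ trivially.
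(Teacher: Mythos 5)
Your part (a) is correct and is essentially the paper's argument (the paper passes from an arbitrary $i$ to the first $j\ge i$ with $\theta_{j+1}(u)=\theta_{j+1}(v)=0$ and applies Proposition~\ref{prop:reducedIncreasingSequences}(c); your choice of the largest maximizer of $\lambda_i(u)+\lambda_i(v)-i$ accomplishes the same thing). The gap is in part (b), at the sentence ``the strict increase in $\IncSuf_i(\bs{p})$ makes these two sets disjoint.'' The order on $\Zcol$ compares values first and colors second, so $t^{[1]}<t^{[2]}$: a strictly increasing colored word may contain two entries of equal value and different colors. Hence the $\bs{u}$-values and $\bs{v}$-values in the suffix need not be disjoint, the value sequence is only weakly increasing, and your intermediate claim that the values are exactly $i-I(\bs{p})+1,\ldots,i$ is false. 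Concretely, take $u=v=s_1$: the maximum of $I_i(u,v)-i$ is $1$, attained only at $i=1$, and $\bs{p}=1^{[1]}1^{[2]}\in\Schvec(u)\Schvec(v)$ attains $I_1(\bs{p})=I_1(u,v)=2$; its suffix values are $(1,1)$, not $(0,1)$, even though $m(\bs{p})=(0,1)$, so the proposition (which is a statement about $m(\bs{p})$, not about $\val(\bs{p})$) still holds. Because of this, your final step ``applying \eqref{eq:max-bot-row} gives $m(\bs{p})_{k-L+j}=v_j=i-L+j$'' is proving a statement stronger than (b) that is simply not true, and the chain of reasoning does not establish (b) as written.

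The damage is repairable inside your framework, which is genuinely different from the paper's proof (the paper rules out $m(\bs{p})_{k-j}<i-j$ by commuting the two increasing blocks of the suffix past each other to produce a new shuffle $\bs{q}$ with $I_{i'}(\bs{q})-i'$ too large, contradicting the maximality of $i$). Your decomposition into $\bs{u}$-entries and $\bs{v}$-entries still shows: writing $w_1\le\cdots\le w_L$ for the (weakly increasing) values of the last $L$ positions, at least $j$ of them are $\le w_j$, and the $\bs{u}$-entries (resp.\ $\bs{v}$-entries) among those have distinct values, each $\le w_j$ and each with $\theta=1$ by Proposition~\ref{prop:reducedIncreasingSequences}(a); hence $\lambda_{w_j}(u)+\lambda_{w_j}(v)\ge j$, and maximality of $M$ (applicable since $w_j\ge 1$) gives $w_j\ge i-L+j$. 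Then $w_L=i$, and a downward induction using $m(\bs{p})_{k-L+j}=\min\bigl(w_j,\,m(\bs{p})_{k-L+j+1}-1\bigr)$ from \eqref{eq:max-bot-row} yields $m(\bs{p})_{k-L+j}=i-L+j$ with no disjointness needed; Lemma~\ref{lem:max-bottom-inc} then caps the increasing suffix of $m(\bs{p})$ at length $L$ as you say. With that correction your argument gives an alternative to the paper's exchange argument; as written, it has a genuine gap.
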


Note that $\max_{i \geq 0}(\lambda_i(u) + \lambda_i(v) - i)$ is the right side of \eqref{eq:bs-formula}.

\begin{proof} \;
    \begin{enumerate}
        \item[(a)]
        By Proposition \ref{prop:multiplication-adds-I} and Proposition \ref{prop:reducedIncreasingSequences}(b),
        \[
        I_i(u,v) - i = I_i(u) + I_i(v) - i \leq \lambda_i(u) + \lambda_i(v) - i,
        \]
        Taking the maximum over all $i \geq 0$ we get
        \[
        \max_{i \geq 0}(I_i(u,v)-i) \leq \max_{i \geq 0}(\lambda_i(u) + \lambda_i(v) - i).
        \]
        Now, let $i \geq 0$ be such that it maximizes $I_i(u,v) - i$, and let
        \[
        j = \min\{\alpha \geq i \;|\; \theta_{\alpha + 1}(u) = \theta_{\alpha + 1}(v) = 0\}.
        \]
        By construction, for all $i < j' \leq j$ we have $\theta_{j'}(u) + \theta_{j'}(v) \geq 1$. Applying Proposition \ref{prop:reducedIncreasingSequences}(c),
        \begin{align*}
        I_{j}(u,v) - j &= \lambda_j(u) + \lambda_j(v) - j \\&= \lambda_i(u) + \lambda_i(v) + \left(\sum_{j'=i+1}^{j} \theta_{j'}(u) + \theta_{j'}(v)\right) - j \\&\geq \lambda_i(u) + \lambda_i(v) + (j-i) - j \\&= \lambda_i(u) + \lambda_i(v) - i.
        \end{align*}
        So, $\max_{i \geq 0}(I_i(u,v) - i) \geq \max_{i \geq 0}(\lambda_i(u) + \lambda_i(v) - i)$.

        \item[(b)] We are trying to show that $m(\bs{p})_{k-j} = i - j$ for all $0 \leq j < I_i(\bs{p})$. If $I_i(\bs{p}) = 0$ then the statement is vacuous, so we can assume this is not the case. Otherwise, we induct on $j$. For $j=0$, we need to show that $\val(\bs{p}_k) = i$. We assumed $I_i(\bs{p}) \neq 0$, so $\val(\bs{p}_k) \leq i$, and if $\val(\bs{p}_k) < i$, then \[I_{i-1}(u,v) - (i-1) = I_{i-1}(\bs{p}) - (i-1) = I_i(\bs{p}) - (i-1) = I_i(u,v) - i + 1,\] but this contradicts the choice of $i$. Thus, $m(\bs{p})_k = \val(\bs{p}_k) = i$.
        
        Now we must show for $1 \leq j < I(\bs{p})$, we have $m(\bs{p})_{k-j} = i - j$, and we inductively assume $i - j = m(\bs{p})_{k - (j-1)} - 1$. Since we are in the increasing suffix, by Lemma  \ref{lem:max-bottom-inc} we have $m(\bs{p})_{k-j} < m(\bs{p})_{k-(j-1)}$. In particular, $m(\bs{p})_{k-j} \leq i - j$. To prove equality, we will suppose $m(\bs{p})_{k-j} < i - j$ and show a contraction. By \eqref{eq:max-bot-row} we have $\val(\bs{p}_{k-j}) = m(\bs{p})_{k-j} < m(\bs{p})_{k-(j-1)} - 1 \leq \val(\bs{p}_{k-(j-1)}) - 1$.
        
        We break the increasing suffix into two pieces, $\bs{c} := \bs{p}_{k-I(p)+1, k-j}$ and $\bs{d} := \bs{p}_{k-(j-1), k}$. Since $\bs{c}$ and $\bs{d}$ are both increasing and since the first entry of $\bs{d}$ has value at least two more than the last entry of $\bs{c}$, every entry of $\bs{d}$ has value greater than that of every entry of $\bs{c}$ by at least 2. As a result, $\bs{d}\circ\bs{c}$ is also a suffix of some word $\bs{q}$ in $\Schvec(u)\Schvec(v)$, by doing the appropriate Coxeter commutations for words of $u$ and $v$.
        
        Consider the last entry, $\bs{q}_k$, of $\bs{q}$. This is also the last entry of $\bs{c}$, so it equals $\bs{p}_{k-j}$. Thus, $\val(\bs{q}_k) = \val(\bs{p}_{k-j}) = m(\bs{p})_{k-j} < i-j$. Since $\bs{c}$ is increasing, this means that $I_{i-j-1}(\bs{q}) \ge \ell(\bs{c}) = I_i(\bs{p})-j$, so \[I_{i-j-1}(\bs{q}) - (i-j-1) \ge I_i(\bs{p})-i+1 > I_i(\bs{p}) - i,\] contradicting the choice of $i$.\qedhere
    \end{enumerate}
\end{proof}

\begin{example}
Using one-line notation, let $u = 21543$, and let $v = 12453$. $\code(u) = (1,0,2,1)$ and $\code(v) = (0,0,1,1)$. The reduced words for $u$ are $3431$, $3413$, $3143$, $1343$, $4341$, $4314$, $4134$, and $1434$. The only reduced word for $v$ is $34$.

$\Schvec(u)\Schvec(v)$ consists of all of the colored shuffles between the reduced words for $u$ and the reduced words for $v$. By considering these shuffles, one can determine that $I_i(u,v) - i$ attains its maximum when $i=4$ and that $I_4(u,v) = 5$, due to the word \[
\bs{p} := 4^{[1]}1^{[1]}3^{[1]}3^{[2]}4^{[1]}4^{[2]} \in \Schvec(u)\Schvec(v),
\]
which has an increasing $4$-suffix of length $5$. $\lambda_i(u) + \lambda_i - i$ also attains its maximum when $i=4$, and $\max_{i \geq 0}(I_i(u,v) - i) = \max_{i \geq 0}(\lambda_i(u) + \lambda_i(v) - i) = 1$. We also have $m(\bs{p}) = (0,0,1,2,3,4)$, which satisfies $\IncSuf_4(m(\bs{p})) = (0, 1, 2, 3, 4)$.
\end{example}

We now prove \eqref{eq:bs-formula} by proving inequalities in both directions.

\begin{lemma}
    $\max_{i \geq 0} (I_i(u,v) - i) \leq \BS(u,v)$
\end{lemma}
\begin{proof} Let $i \geq 0$ be such that it maximizes $I_i(u,v) - i$. We have  $I_i(u,v) = I_i(\sum_w \ol{c_{u,v}^w}\Schvec(w))$, so let $\bs{w} \in \sum_w \ol{c_{u,v}^w}\Schvec(w)$ be such that $I_i(\bs{w}) = I_i(u,v)$. By Proposition \ref{prop:incmerge}, either $\IncSuf_i(\bs{w}) = \emptyset$, in which case $I_i(u,v) - i \leq 0 \leq \BS(u,v)$, so we are done, or $\IncSuf_i(\bs{w}) = (i - I_i(\bs{w}) + 1, i - I_i(\bs{w}) + 2, \ldots, i)$, so $i - I_i(\bs{w}) + 1 \in \supp \bs{w}$. So,
    \[
        \BS(w) = 1 - \min(\supp \bs{w}) \geq 1 - (i - I_i(\bs{w}) + 1) = \max_{i \geq 0} (I_i(u,v) - i).\qedhere
    \]
\end{proof}

\begin{lemma}
    $\max_{i \geq 0} (I_i(u,v) - i) \geq \BS(u,v)$
\end{lemma}
\begin{proof}
    If $\BS(u,v) = 0$ then the statement is clear. If $\BS(u,v) \neq 0$, then take any $w$ for which $\ol{c_{u,v}^w} \ne 0$ and $\BS(w) = \BS(u,v)$. By \eqref{eq:BS-def}, $\theta_{1 - \BS(u,v)}(w) = 1$. Let $j$ be the smallest integer such that $\theta_{j+1}(w) = 0$ and $j \geq 1 - \BS(u,v)$. Then, by Proposition \ref{prop:reducedIncreasingSequences}(c)
    \[
    I_j(w) = \lambda_j(w) = j - (1 - \BS(w)) + 1 = j + \BS(u,v),
    \]
    where the second equality is because $\theta_i = 0$ for $i < 1 - \BS(u,v)$.
    
    By Corollary \ref{cor:dc1}, $j\geq 1$. Thus we have
    \begin{align*}
    \max_{i \geq 0} (I_i(u,v) - i)
    &= \max_{i \geq 0} \left(I_i \left(\sum_w \ol{c_{u,v}^w}\Schvec(w) \right) - i\right)
    \\&\geq I_j(w) - j 
    \\&= j + \BS(u,v) - j
    \\&= \BS(u,v).\qedhere
    \end{align*}
\end{proof}

Combining these lemmas with Proposition \ref{prop:incmerge}(a), we have \[\BS(u,v) = \max_{i \geq 0} (I_i(u,v) - i) = \max_{i \geq 0}(\lambda_i(u) + \lambda_i(v) - i),\] and \eqref{eq:bs-formula} is proven.

Now for part (a) of Conjecture~\ref{conj:li-back-stable}:
\begin{lemma} \label{lem:crucial-last-step}
    Given $0 \leq m \leq \BS(u,v)$, there exists some $w\in S_\Z$ such that $\ol{c_{u,v}^w} \ne 0$ and $\BS(w) = m$.
\end{lemma}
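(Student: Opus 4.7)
My plan is to adapt the lower-bound argument used to prove \eqref{eq:bs-formula}, applying it at an intermediate index instead of the global maximum in order to produce a permutation $w$ with $\ol{c_{u,v}^w} \ne 0$ and $BS(w) = m$ for each $m \in [0, BS(u,v)]$.

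First, I observe that the function $g(i) := \lambda_i(u) + \lambda_i(v) - i$ takes every integer value in $[0, BS(u,v)]$: $g(0) = 0$, $g(i) \to -\infty$, the step $g(i) - g(i-1) = \theta_i(u) + \theta_i(v) - 1 \in \{-1,0,1\}$, and $\max g = BS(u,v)$ by Proposition \ref{prop:incmerge}(a), so a discrete intermediate value argument gives the claim. For each $m$ I would select $i_m$ with $g(i_m) = m$ satisfying two additional properties: (i) $g(i_m + 1) = m - 1$ (equivalently $\theta_{i_m+1}(u) = \theta_{i_m+1}(v) = 0$), and (ii) $g(k) \leq m$ for all $k \in [0, i_m - 1]$. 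Property (i) yields $I_{i_m}(u,v) = \lambda_{i_m}(u) + \lambda_{i_m}(v) = m + i_m$ via Proposition \ref{prop:reducedIncreasingSequences}(c), while property (ii) is the ``density'' bound needed to force a specific suffix shape below.

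Next, I would construct $\bs{p} = \bs{c} \circ \bs{d} \circ \bs{e} \in \Schvec(u)\Schvec(v)$ following the construction in the proof of Proposition \ref{prop:multiplication-adds-I}: $\bs{c}, \bs{d}$ are non-suffix prefixes of carefully chosen reduced words $\bs{u} \in \RW(u)$, $\bs{v} \in \RW(v)$, and $\bs{e}$ is the unique color-order-increasing interleaving of their maximal-length increasing $i_m$-suffixes. The last entry of $\bs{p}$ has value $i_m$, using $\theta_{i_m}(u) + \theta_{i_m}(v) \geq 1$ (which follows from (ii) combined with $g(i_m) = m$). Applying the max-bottom-row recursion \eqref{eq:max-bot-row} to $\bs{e}$ and using (ii) to ensure the $L$-th smallest value in the combined suffix is at least $L - m$ --- paralleling the proof of Proposition \ref{prop:incmerge}(b) --- forces $\IncSuf_{i_m}(m(\bs{p})) = (1-m, 2-m, \ldots, i_m)$.

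Finally, set $\bs{w} := \psi(\bs{p}) \in \RW(w)$ with $\ol{c_{u,v}^w} \ne 0$. By Proposition \ref{prop:psi-maximal-inc-suf}, $\IncSuf(\bs{w}) = (1-m, \ldots, i_m)$, so $1-m \in \supp \bs{w}$ and $BS(w) \geq m$. For $BS(w) \leq m$, the entrywise inequality $\bs{w}_j \geq m(\bs{w})_j = m(\bs{p})_j$ combined with $\min(m(\bs{p})) = 1-m$ --- which holds provided the prefix $\bs{c} \circ \bs{d}$ has no color-order increases --- yields $\min(\supp \bs{w}) \geq 1-m$, completing $BS(w) = m$. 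The main obstacle is twofold: first, showing that a suitable $i_m$ exists for every $m$ (when the lattice path $g$ oscillates above level $m$ before first descending through it, neither the first nor the last occurrence of $g = m$ may simultaneously satisfy (i) and (ii), so a subtler choice is needed); and second, arranging $\bs{u}, \bs{v}$ so that the concatenation $\bs{c} \circ \bs{d}$ is free of color-order increases --- i.e., picking reduced words whose non-suffix prefixes are weakly decreasing in value --- which is an auxiliary existence claim about reduced words that I expect will require its own short combinatorial lemma.
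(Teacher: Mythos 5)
Your strategy has two gaps that are not mere technicalities but actual failures of the auxiliary claims you flag. First, the mechanism for the upper bound $BS(w)\le m$ cannot work in general: since $m(\bs{p})$ is weakly increasing, demanding $\min(m(\bs{p}))=1-m$ while the suffix of $m(\bs{p})$ starts at $1-m$ forces every prefix entry of $m(\bs{p})$ to equal $1-m$, i.e.\ the prefix of $\bs{p}$ must have no colored increases at all. That in turn forces each of $\bs{u},\bs{v}$ to be a reduced word of the form (strictly decreasing) followed by (increasing), and such reduced words need not exist: $u=3412$ has only the reduced words $2312$ and $2132$, neither of which has this shape, so no admissible $\bs{p}$ exists for such $u$. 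Second, your conditions (i) and (ii) on $i_m$ can be genuinely incompatible, not just awkward to arrange: for $u=v$ with Lehmer code $(1,1,1)$ one has $g=0,1,2,3,2,1,0,\ldots$, and for $m=1$ every index with $g(i_m)=1$ and $g(i_m+1)=0$ lies after the peak, so (ii) fails at every candidate. So the construction produces no witness in cases where the lemma is true, and no ``subtler choice'' of a single intermediate index repairs this.

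The paper avoids both problems by never choosing intermediate indices or special reduced words. It fixes the single global maximizer $i$ of $I_i(u,v)-i$, and for each $n=1-m\le 0$ applies the operator $(\nabla-(n-1)\xi)^{k-(i-n)-1}$ to the equivalence \eqref{eq:Schub-word-equivalence} (legitimate by Theorem \ref{thm:nabla-xi-preseve-equiv} since both sides lie in $\mcR$). On the left, every coefficient $\pi_{n-1}(\bs{p}_{1,k-(i-n)-1})$ is strictly positive because $u,v\in S_{\Z_+}$ have all letter values $\ge 1>n-1$; on the right, the coefficient vanishes exactly when $n-1$ occurs in the prefix of the reduced word. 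Proposition \ref{prop:incmerge}(b) pins the suffix of the maximal bottom row of a maximizing shuffle to $(n,n+1,\ldots,i)$, so applying $T_{(n,\ldots,i)}$ and using Proposition \ref{prop:Z-star-increasing-suffix} yields a reduced word $\bs{w}$ with suffix exactly $(n,\ldots,i)$ (hence $n\in\supp\bs{w}$) and with $n-1$ absent from the prefix (nonvanishing of $\pi_{n-1}$), so $n-1\notin\supp\bs{w}$; Corollary \ref{cor:dc2} then gives $BS(w)=1-n=m$ exactly. This is the ingredient your outline is missing: a way to certify the \emph{absence} of the value $-m$ from the uncontrolled part of the word on the Schubert side, which your entrywise bound $\bs{w}\ge m(\bs{p})$ can only deliver under the (generally unattainable) constant-prefix hypothesis.
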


The proof of this lemma is fairly technical, and combines every tool we have used so far. The main step is to apply the operator $(\nabla + m\xi)^{k - i - m}$ to both sides of \eqref{eq:Schub-word-equivalence}. On the left side, this gives a positive linear combination of shuffled words, and using the results of Sections \ref{sec:colored-shuffle-algebra} and \ref{sec:backstabilization-thm-proof} we show that there must exist the desired $w\in S_\Z$.

A proof of Conjecture \ref{conj:down-up MonkConnected} would result in a much less technical proof of this result.

\begin{proof}
    The fact that there exists a $w$ such that $\ol{c_{u,v}^w} \ne 0$ and $\BS(w) = 0$ is clear because non-back-stable Schubert expansions always have at least one term. So, we can assume that $\BS(u,v) > 0$ and we just have to prove the claim for $1 \leq m \leq \BS(u,v)$. To do this, by Corollary \ref{cor:dc2} we just have to show that for any $1 - \BS(u,v) \leq n \leq 0$ there exists some $w$ with $\ol{c_{u,v}^w} \ne 0$ such that for any reduced word $\bs{w} \in \RW(w)$, $n \in \supp\bs{w}$ but $n-1 \notin \supp\bs{w}$.
    
    Let $i \geq 0$ be such that it maximizes $I_i(u,v)-i$ and let $k = \ell(u)+\ell(v)$. We apply the operator $(\nabla - (n-1)\xi)^{k - (i - n) - 1}$ to both sides of \eqref{eq:Schub-word-equivalence}, and by Theorem \ref{thm:nabla-xi-preseve-equiv}, this yields
    \begin{equation} \label{eq:linear-comb-diff-on-Schub-prod}
    (\nabla - (n-1)\xi)^{k - (i - n) - 1}(\Schvec(u) \Schvec(v))
    \equiv (\nabla - (n-1)\xi)^{k - (i - n) - 1}\left(\sum_w \ol{c_{u,v}^w} \Schvec(w)\right).
    \end{equation}

    Both sides of \eqref{eq:linear-comb-diff-on-Schub-prod} can be computed directly via the definitions in \eqref{eq:nabla-xi-word-def}, and we obtain:
    \begin{equation} \label{eq:essential-eqn-main-proof}
    \sum_{\bs{p} \in \Schvec(u)\Schvec(v)}\mathcal{\pi}_{n-1}(\bs{p}_{1,k-(i-n)-1})\bs{p}_{k - (i - n), k}
    \equiv \sum_{w} \ol{c_{u,v}^w} \sum_{\bs{w} \in \RW(w)} \mathcal{\pi}_{n-1}(\bs{w}_{1,k-(i-n)-1})\bs{w}_{k - (i - n), k},
    \end{equation}
    where $\pi_c(\bs{q}) = (\val(\bs{q}_1) - c) (\val(\bs{q}_2) - c) \ldots (\val(\bs{q}_{\ell(\bs{q})}) - c)\in\Z$.
    
    This equation is the essential step of our proof. Note that on the left side of \eqref{eq:essential-eqn-main-proof},
    \[
    \mathcal{\pi}_{n-1}(\bs{p}_{1,k-(i-n)-1}) > 0 \qquad \text{ for all } \bs{p} \in \Schvec(u)\Schvec(v)
    \]
    because $\bs{p}_j \geq 1$ and $n < 1$, so $\bs{p}_j - (n-1) \geq 2$ for all $j$. So, the left side of \eqref{eq:essential-eqn-main-proof} is in $\N[\Zcol^*]$.
    
    By \eqref{eq:bs-formula}, $I_i(u,v)-i = \BS(u,v) > 0$, so $I_i(u,v) \neq 0$, and we can find $\bs{p} \in \Schvec(u)\Schvec(v)$ with $I_i(\bs{p}) = I_i(u,v) \neq 0$. $\bs{p}$ satisfies the hypotheses to apply Proposition \ref{prop:incmerge}(b), and $I_i(\bs{p}) \neq 0$ so
    \begin{align*}
    m(\bs{p})_{k - (I(\bs{p}) - 1), k} &= \IncSuf(m(\bs{p}))  \\
    &= (i - (I(\bs{p}) - 1), i - (I(\bs{p}) - 1) + 1, \ldots, i).
    \end{align*}

    In particular, $i - n \leq i - (1 - \BS(u,v)) = I_i(\bs{p}) - 1$, so
    \begin{align*}
        m(\bs{p}_{k - (i - n), k}) &= (i - (i-n), i - (i-n) + 1, \ldots, i) \\
        &= (n, n + 1, \ldots, i).
    \end{align*}
    Applying the operator $T_{(n, n + 1, \ldots, i)}$ from \eqref{eq:projection-op-def} to the left side of \eqref{eq:essential-eqn-main-proof} gives a positive integer, so the same must be true for the right side. This means that there must exist some $w$ with $\ol{c_{u,v}^w}\ne 0$ and some $\bs{w} \in \RW(w)$ with $\mathcal{\pi}_{n-1}(\bs{w}_{1,k-(i-n)-1}) \neq 0$ and $m(\bs{w}_{k - (i - n),k}) = (n, n + 1, \ldots, i)$. By Proposition \ref{prop:Z-star-increasing-suffix}, $\IncSuf(\bs{w}) = \IncSuf(m(\bs{w}))$, so $\bs{w}_{k - (i - n),k} = m(\bs{w}_{k - (i - n),k}) = (n, n + 1, \ldots, i)$, so $n \in \supp\bs{w}$ and $n-1 \notin \supp\bs{w}_{k - (i - n),k}$. Then $\mathcal{\pi}_{n-1}(\bs{w}_{1,k-(i-n)-1}) \neq 0$, so $n-1 \notin \supp\bs{w}_{1, k - (i - n) - 1}$, as desired.
\end{proof}

Recalling the definition of $V_k(u,v)$ from the introduction, the previous lemma shows that for all $0 \leq k \leq \BS(u,v)$, $V_k(u,v) \setminus V_{k+1}(u,v)$ is nonempty, so $\St(u,v) = \BS(u,v)$. This proves part (a) of Conjecture~\ref{conj:li-back-stable}, and therefore Theorem~\ref{thm:backstabilization-thm}.

\section{Forward Stabilization} \label{sec:forward-stabiization}

\subsection{Conjugation by $w_0$}

In this section, we prove our second main result, Theorem~\ref{thm:minimal-flag-variety}. We start by considering a duality for Schubert structure constants.

Let $\text{flip} : \Z \rightarrow \Z$ be the involution $i \mapsto 1-i$. Denote $\iota:S_\Z\to S_\Z$ to be conjugation by the flip map, $\iota(w) = \text{flip} \cdot w \cdot \text{flip}$ where multiplication is function composition. $\iota$ is an involution on $S_\Z$, and satisfies $\iota(uv) = \iota(u)\iota(v)$. For all $i$, $\iota(s_i) = s_{-i}$, so if $\bs{w} = s_{i_1} \ldots s_{i_k}$ then $\iota(\bs{w}) = s_{-i_1} \ldots s_{-i_k}$, and if $i_1 \ldots i_k \in \RW(w)$ then $-i_1 \ldots -i_k \in \RW(\iota(w))$.

Let $w_0:=w_0^{(n)}$ denote the longest element of $S_n$, and we suppress the superscript when $n$ is clear. We have
\begin{equation} \label{eq:iota-w0-conjugation}
w_0ww_0 = \gamma^n\iota(w) = \iota\gamma^{-n}(w)\in S_n, \qquad\qquad \text{for all } w\in S_n.
\end{equation}

The following geometric fact
is well-known to experts (see e.g. \cite{KnutsonYong}).

\begin{lemma} \label{lem:w0-cuvw-relation}
For all $u,v,w\in S_n$, $c_{u,v}^w = c_{w_0uw_0,w_0vw_0}^{w_0ww_0}$.
\end{lemma}

\begin{corollary} \label{cor:iota-cuvw-relation}
For all $u,v,w\in S_\Z$, $\ol{c_{u,v}^w} = \ol{c_{\iota(u),\iota(v)}^{\iota(w)}}$.
\end{corollary}

\begin{proof}
Fix $k$ and $n$ large enough such that $\gamma^k(u),\gamma^k(v),\gamma^k(w)\in S_n$. Then,
\[ \ol{c_{u,v}^w} = c_{\gamma^k(u),\gamma^k(v)}^{\gamma^k(w)}
= c_{w_0\gamma^k(u)w_0,w_0\gamma^k(v)w_0}^{w_0\gamma^k(w)w_0}
= c_{\iota\gamma^{k-n}(u),\iota\gamma^{k-n}(v)}^{\iota\gamma^{k-n}(w)}
= c_{\gamma^{n-k}\iota(u),\gamma^{n-k}\iota(v)}^{\gamma^{n-k}\iota(w)}
= \ol{c_{\iota(u),\iota(v)}^{\iota(w)}},
\]
where we have used \eqref{eq:iota-w0-conjugation}, Lemma~\ref{lem:w0-cuvw-relation}, and the definition of the back-stable structure constants.\qedhere
\end{proof}

It will be productive to work with a variant of the back-stabilization number, where negative numbers are allowed. For any $w\in S_\Z$, let $\til{\BS}(w)$ be the least integer $k$ such that $\gamma^k(w)\in S_{\Z_+}$, and for any $u,v\in S_\Z$, let
\begin{equation} \label{eq:real-back-stable-def}
\til{\BS}(u,v) = \max_{w|\ol{c_{u,v}^w}\ne 0} \til{\BS}(w) = \min\left\{k \;\left|\; \gamma^k(w)\in S_{\Z_+} \text{ for all } w\in S_{\Z} \text{ with } \ol{c_{u,v}^w}\ne 0\right.\right\}.
\end{equation}
For any $u,v\in S_{\Z_+}, w\in S_\Z$,
\[
\BS(w) = \max\left(\til{\BS}(w),0\right), \qquad\qquad \BS(u,v) = \max\left(\til{\BS}(u,v),0\right),
\]
and for any reduced word $\bs{w}\in\RW(w)$, $\til{\BS}(w) = 1-\min(\supp \bs{w})$.

Similarly let $\til{\FS}(w)$ be the least integer $k$ such that $\gamma^{-k}(w)\in S_{-\N}$, where $-\N$ is the set of nonpositive integers. For any reduced word $\bs{w}\in\RW(w)$, $\til{\FS}(w) = 1+\max(\supp \bs{w})$, and we have $\til{\FS}(w) = \til{\BS}(\iota w)$. If $w\in S_{\Z_+}$, $\til{\FS}(w) = \FS(w)$.

For any $u,v\in S_\Z$, let
\begin{equation} \label{eq:forward-stable-def}
\til{\FS}(u,v) = \max_{w|\ol{c_{u,v}^w}\ne 0} \til{\FS}(w) = \min\left\{k \;\left|\; \gamma^{-k}(w)\in S_{-\N} \text{ for all } w\in S_{\Z} \text{ with } \ol{c_{u,v}^w}\ne 0\right.\right\}.
\end{equation}

\begin{proposition} \label{prop:FS-BS-relationship}
Let $u,v\in S_\Z$, and let $a = \max(\til{\BS}(u),\til{\BS}(v))$. Then,
    \begin{enumerate}
        \item[(a)] $\til{\FS}(u,v) = \til{\BS}(\iota u,\iota v)$
        \item[(b)] $\til{\BS}(u,v) = \max_{i\ge -a}(\lambda_i(u)+\lambda_i(v)-i)$
        \item[(c)] $\max_{i \geq -a}(I_i(u,v) - i) = \max_{i \geq -a}(\lambda_i(u) + \lambda_i(v) - i)$
        \item[(d)] Let $i \geq -a$ be such $I_i(u,v) - i$ attains its maximum. Let $\bs{p} \in \Schvec(u)\Schvec(v)$ with $I_i(\bs{p}) = I_i(u,v)$. Then, either $\IncSuf_i(m(\bs{p})) = \emptyset$ or $\IncSuf_i(m(\bs{p})) = (i-I(\bs{p})+1, i-I(\bs{p})+2, \ldots, i)$.
    \end{enumerate}
\end{proposition}

\begin{proof} \;
    \begin{enumerate}
    \item[(a)] This follows from Corollary~\ref{cor:iota-cuvw-relation} and the fact that $\til{\FS}(w) = \til{\BS}(\iota w)$.
    \item[(b)] Up to a shift, this is just \eqref{eq:bs-formula}. We have $\gamma^a(u),\gamma^a(v)\in S_{\Z_+}$, and
    \begin{align*}
    \til{\BS}(u,v)
    &= a + \BS(\gamma^a(u),\gamma^a(v))
    \\&= a + \max_{i\ge 0}(\lambda_i(\gamma^a(u))+\lambda_i(\gamma^a(v))-i)
    \\&= \max_{i\ge -a}(\lambda_i(u)+\lambda_i(v)-i).
    \end{align*}
    (c) and (d) follow from applying $\gamma^{-a}$ to Proposition~\ref{prop:incmerge}\qedhere
    \end{enumerate}
\end{proof}

This brings us to the first main result of the section, a formula for $\til{\FS}(u,v)$. We start by defining the dual Lehmer code of a permutation.

\begin{definition} \label{def:dual-lehmer-code}
The \emph{dual Lehmer code} is the doubly-infinite sequence
\[
\text{dualcode}(w) = (\ldots, d_{-1}, d_0, d_1, \ldots),
\]
where
\[
d_i := \text{dualcode}(w)_i = |\{j\in \Z \;|\; j<i, w(j)>w(i)\}|,
\]
and let
\[\Theta_i(w) = \begin{cases}
    1 & \text{if dualcode}(w)_i>0, \\
    0 & \text{otherwise},
\end{cases}
\qquad\qquad
\Lambda_i(w) = \sum_{j\ge i} \Theta_j(w).
\]

\end{definition}
We have
\begin{equation} \label{eq:dualcode-relations}
\dualcode(w)_i = \code(w^{-1})_{w(i)} = \code(\iota w)_{1-i}, \qquad\qquad \Lambda_i(w) = \lambda_{1-i}(\iota w).
\end{equation}

\begin{example}
    Let $w = 2431$; then $w^{-1} = 4132$, and
    \[
    \code(w) = (1,2,1,0), \qquad \code(w^{-1}) = (3,0,1,0),
    \]
    \[
    \text{dualcode}(w) = (0,0,1,3), \qquad \text{dualcode}(w^{-1}) = (0,1,1,2).
    \]
    Applying $\iota$ (and shifting for convenience), $\gamma^4(\iota w) = 4213$ and $\gamma^4(\iota w^{-1}) = 3241$, and
    \[
    \code(\gamma^4(\iota w)) = (3,1,0,0), \qquad \code(\gamma^4(\iota w^{-1})) = (2,1,1,0),
    \]
    \[
    \text{dualcode}(\gamma^4(\iota w)) = (0,1,2,1), \qquad \text{dualcode}(\gamma^4(\iota w^{-1})) = (0,1,0,3).
    \]
\end{example}

\begin{corollary} \label{cor:forward-stabilization}
    \begin{equation} \label{eq:forward-stabilization}
    \til{\FS}(u,v) = \max_{i\le 1+\max(\til{\FS}(u),\til{\FS}(v))}(\Lambda_i(u)+\Lambda_i(v)+i-1).
    \end{equation}
\end{corollary}
\begin{proof}
    Combining \eqref{eq:dualcode-relations} and both parts of Proposition~\ref{prop:FS-BS-relationship},  we have
    \begin{align*}
    \til{\FS}(u,v)
    &=\max_{i\ge -\max(\til{\BS}(\iota u),\til{\BS}(\iota v))}(\lambda_i(\iota u)+\lambda_i(\iota v)-i)
    \\&=\max_{i\ge -\max(\til{\FS}(u),\til{\FS}(v))}(\Lambda_{1-i}(u)+\Lambda_{1-i}(v)-i)
    \\&=\max_{i\le 1+\max(\til{\FS}(u),\til{\FS}(v))}(\Lambda_i(u)+\Lambda_i(v)+i-1),
    \end{align*}
    as desired.
\end{proof}

\subsection{Proof of Theorem~\ref{thm:minimal-flag-variety}}

Recall that for $w\in S_{\Z_+}$, $\FS(w) = \til{\FS}(w)$, so the right sides of \eqref{eq:forward-stabilization} and \eqref{eq:mfv-formula} are the same. Therefore, Theorem~\ref{thm:minimal-flag-variety} is a direct consequence of Corollary~\ref{cor:forward-stabilization} and the following lemma:

\begin{lemma} \label{lem:FS-is-stable}
For all $u,v\in S_{\Z_+}$, $\FS(u,v) = \til{\FS}(u,v)$.
\end{lemma}

\begin{proof}
    One inequality holds by definition:
    \[
    \FS(u,v) = \max_{w|c_{u,v}^w\ne 0} \til{\FS}(w) \le \max_{w|\ol{c_{u,v}^w}\ne 0} \til{\FS}(w) = \til{\FS}(u,v).
    \]
    
    For the other direction, we will consider the Schubert product $\Sch_{\iota w}\Sch_{\iota v}$, and show that there exists some $w\in S_\Z$ with $\ol{c_{\iota u,\iota v}^w}\ne 0$, such that $\til{\FS}(w) \le 0$ and $\til{\BS}(w) = \til{\BS}(\iota u,\iota v)$. Then the permutation $\iota w$ is an element of $S_{\Z_+}$ and satisfies $c_{u,v}^{\iota w} \ne 0$, $\til{\FS}(\iota w) = \til{\FS}(u,v)$, and therefore, $\FS(u,v) \ge \til{\FS}(\iota w) = \til{\FS}(u,v)$.
 
    To find such a permutation, we use a similar approach to Lemma \ref{lem:crucial-last-step}. Let $k = \ell(u) + \ell(v) = \ell(\iota u) + \ell(\iota v)$ and $a = \max(\til{\BS}(u),\til{\BS}(v))$. Fix $i\ge -a$ and $\bs{p} \in \Schvec(\iota u)\Schvec(\iota v)$ such that $I_i(\bs{p}) - i$ is maximal; by Proposition~\ref{prop:FS-BS-relationship}(b,c), $I_i(\bs{p}) - i = \til{\BS}(\iota u,\iota v)$. 
    The maximality of $I_i(\bs{p})-i$ ensures that either $\IncSuf_i(\bs{p}) = \emptyset$ or the last character of $\bs{p}$ has value $i$. In the former case, $i=-j$, so $\til{\BS}(\iota u, \iota v) = \max(\til{\BS}(\iota u), \til{\BS}(\iota v))$, so $\til{\FS}(u, v) = \max(\til{\FS}(u), \til{\FS}(v))$. Since (non-back-stable) Schubert expansions are nonempty, and by DC-triviality, $\FS(u,v) \geq \til{\FS}(u),\til{\FS}(v)$, so the lemma holds in this case.
    Thus, we can assume we are in the latter case, and so $i<0$ since $\iota u,\iota v\in S_{-\N}$.
    
    Recall by \eqref{eq:Schub-word-equivalence} that $\Schvec(u)\Schvec(v) \equiv \sum_w \ol{c_{u,v}^w} \Schvec(w)$. By Theorem \ref{thm:nabla-xi-preseve-equiv} and Corollary \ref{cor:iota-cuvw-relation},
    \[
    \nabla^{k - I(\bs{p})}\left(\Schvec(\iota u)\Schvec(\iota v)\right) \equiv \nabla^{k - I(\bs{p})}\left(\sum_w \ol{c_{u,v}^w} \Schvec(\iota w)\right).
    \]

    Applying the definition of $\nabla$,
    \begin{equation} \label{eq:pi-iota-essential-step}
    \sum_{\bs{q} \in \Schvec(\iota u)\Schvec(\iota v)}\mathcal{\pi}(\bs{q}_{1,k-I(\bs{p})})\bs{q}_{k - I(\bs{p}) + 1, k}
    \equiv \sum_{w} \ol{c_{u,v}^w} \sum_{\bs{w} \in \RW(\iota w)} \mathcal{\pi}(\bs{w}_{1,k-I(\bs{p})})\bs{w}_{k - I(\bs{p}) + 1, k},
    \end{equation}
    where for any $\bs{q}$, $\pi(\bs{q}) = \val(\bs{q}_1) \val(\bs{q}_2) \cdot \ldots \cdot \val(\bs{q}_{\ell(\bs{q})})\in\Z$.

    $\bs{p}$ satisfies the hypotheses to apply
    Proposition \ref{prop:FS-BS-relationship}(d), and $I_i(\bs{p}) \neq 0$ so
    \begin{align*}
    m(\bs{p})_{k - (I(\bs{p}) - 1), k} &= \IncSuf(m(\bs{p}))  \\
    &= (i - (I(\bs{p}) - 1), \ldots, i - 1, i) \\
    &= (-\til{\BS}(\iota u, \iota v) + 1, \ldots, i - 1, i).
    \end{align*}
    
    Note that for any $\bs{q} \in \Schvec(\iota u) \Schvec(\iota v)$, $\val(\bs{q}_j) < 0$, so the sign of every term on the left side of \eqref{eq:pi-iota-essential-step} is $(-1)^{k-I(\bs{p})}$, and since this sign is independent of $\bs{q}$, there is no cancellation. $\bs{p}$ is in this summation, so $T_{(-\til{\BS}(\iota u, \iota v) + 1, \ldots, i - 1, i)}$ of the left hand side of \eqref{eq:pi-iota-essential-step} is nonzero, and the same must be true for the right hand side.

    This means that there must exist some $w\in S_\Z$ with $\ol{c_{u,v}^w}\ne 0$ and some $\bs{w} \in \RW(\iota w)$ with $\mathcal{\pi}(\bs{w}_{1,k-I(\bs{p})}) \neq 0$ and $m(\bs{w}_{k-I(\bs{p})+1,k}) = (-\til{\BS}(\iota u, \iota v) + 1, \ldots, i - 1, i)$. By Proposition \ref{prop:Z-star-increasing-suffix}, $\IncSuf(\bs{w}) = \IncSuf(m(\bs{w}))$, so $\bs{w}_{k-I(\bs{p})+1,k} = m(\bs{w}_{k-I(\bs{p})+1,k}) = (-\til{\BS}(\iota u, \iota v) + 1, \ldots, i - 1, i)$, so $0 \notin \supp\bs{w}$ since it doesn't appear in either $\bs{w}_{1,k-I(\bs{p})}$ or $\bs{w}_{k-I(\bs{p})+1,k}$.
    
    Since $0\notin\supp\bs{w}$, we can also conclude that $j\notin\supp\bs{w}$ for any $j>0$. If there were some positive $j\in\supp\bs{w}$, we could use the Coxeter relations to move $j$ to the end fo the word and obtain some $\bs{w}'\in\RW(w)$ ending in $j$. This would mean $w$ has a descent at $j$, which contradicts Lemma~\ref{lem:Knutson-dc-triv}.
    
    Therefore, $\til{\FS}(w)\le 0$, and since $1-\til{\BS}(\iota u, \iota v)\in\supp\bs{w}$, $\til{\BS}(w) = \til{\BS}(\iota u, \iota v)$,
    and $w$ is the desired permutation.
\end{proof}

\section{Down-up connectedness} \label{sec:down-up-connectedness}
Let $t_{a,b}$ be the permutation written in cycle notation as $(a\; b)$. Permutations $w, w' \in S_{\Z}$ are said to be connected by a \textit{down-up Bruhat move}, denoted $w \text{---} w'$, if for some $a,b,c,d \in \Z$, $wt_{a,b}t_{c,d} = w'$ with $\ell(w) = \ell(wt_{a,b}) + 1 = \ell(wt_{a,b}t_{c,d})$. It can be seen that this relation is symmetric but not transitive. A set of permutations $S$ is said to be down-up Bruhat connected if for all $s,t \in S$, there exist some $a_1, \ldots, a_n \in S$ such that $s \text{---} a_1 \text{---} \ldots \text{---} a_n \text{---} t$.

\begin{conjecture} \label{conj:downup}
    For any $u,v \in S_{\Z}$, $\{w \;|\; \ol{c_{u,v}^w} \neq 0\}$ is down-up Bruhat connected.
\end{conjecture}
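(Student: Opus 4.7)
My plan is to induct on $\ell(v)$, using Monk's rule as the base case and a triple-product associativity argument for the inductive step. Write $X(u,v) := \{w : \ol{c_{u,v}^w} \neq 0\}$. For $\ell(v) = 0$, $X(u,e) = \{u\}$ is trivially down-up connected. For $v = s_i$, Monk's rule identifies $X(u,s_i)$ with $\{u t_{a,b} : a \leq i < b,\ \ell(u t_{a,b}) = \ell(u)+1\}$, and any two such elements $u t_{a,b}, u t_{c,d}$ are related by a single down-up move through the common lower cover $u$; this is valid even though $u \notin X(u,s_i)$, since only the endpoints of the connecting chain must lie in the set, not the internal permutations of the individual down-up moves.

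For the inductive step, I would factor $v = v' s_i$ reduced and consider the triple product $\ol{\Sch}_u \ol{\Sch}_{v'} \ol{\Sch}_{s_i}$. By associativity, its support $Y$ admits two descriptions,
\[
Y \;=\; \bigcup_{w' \in X(u,v')} M(w', s_i) \;=\; \bigcup_{v'' \in M(v', s_i)} X(u, v''),
\]
where $M(w, s_i)$ denotes the support of the Monk product $\ol{\Sch}_w \ol{\Sch}_{s_i}$. Since $v \in M(v', s_i)$, we have $X(u,v) \subseteq Y$. The inductive hypothesis gives that $X(u,v')$ is down-up connected, and the base case gives that each $M(w', s_i)$ is down-up connected; piecing these together along their overlaps should yield connectivity of $Y$. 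The principal obstacle is that $X(u,v) \subsetneq Y$ in general, and simple induction on $\ell(v)$ does not reduce the other $X(u,v'')$ appearing in $Y$ since all such $v''$ satisfy $\ell(v'') = \ell(v)$. Overcoming this would likely require either a refined induction---for instance, on a lexicographic order on Lehmer codes chosen so that the Monk summands $v'' \neq v$ in $M(v', s_i)$ are strictly smaller than $v$---or a combinatorial invariant separating $X(u,v)$ from the neighboring $X(u,v'')$ inside $Y$ that is preserved by down-up moves, so that paths in $Y$ can be confined to $X(u,v)$.

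An orthogonal approach would exploit the conjectural multiset bijection $\psi$ of Proposition~\ref{prop:magic-map-exists}: each $w \in X(u,v)$ is identified with a family of reduced shuffles in $\Schvec(u)\Schvec(v)$, and swapping two adjacent entries of such a shuffle induces a braid-like length-preserving move on the underlying permutation which, upon unpacking the resulting $3$-cycle correction, should be recognizable as a down-up Bruhat move. Connectivity of the space of reduced shuffles under adjacent swaps is a tractable combinatorial question about interlacings of reduced words, and would then transfer to $X(u,v)$ through $\psi$. The obstruction here is the conjectural status of $\psi$, but the properties already established for any such map---especially preservation of maximal increasing suffixes (Proposition~\ref{prop:psi-maximal-inc-suf})---suggest that enough structure may be available to attempt an unconditional version of this argument.
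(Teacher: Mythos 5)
This statement is Conjecture~\ref{conj:downup} in the paper: it is left open, the paper offers no proof of it, and so there is nothing of the paper's to compare your argument against. What matters, then, is whether your proposal closes the question, and it does not; you have in fact flagged the decisive gaps yourself. Your base case is sound: for $v=s_i$, any two elements $ut_{a,b}$ and $ut_{c,d}$ of the Monk support are joined by a single down-up move through $u$, and the definition indeed only requires the endpoints of each move (not the intermediate permutation $ut_{a,b}t_{a,b}=u$) to lie in the set. But the inductive step is where the conjecture actually lives, and the associativity argument does not deliver it: even granting that $Y=\bigcup_{v''\in M(v',s_i)}X(u,v'')$ is down-up connected as a union of overlapping connected pieces, connectivity of $Y$ says nothing about the subset $X(u,v)$, because a chain in $Y$ may pass through permutations $w''$ with $\ol{c_{u,v}^{w''}}=0$, which are forbidden as intermediate elements $a_1,\ldots,a_n$ of the connecting chain. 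Your proposed repairs --- a refined induction ordering the Monk summands $v''$ below $v$, or an invariant that confines paths to $X(u,v)$ --- are precisely the missing mathematical content, and neither is supplied; note also that the summands $v''\ne v$ of $M(v',s_i)$ all have length $\ell(v)$, so no length-based induction applies to them, as you observe.

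The second route has a separate problem. The bijection $\psi$ is not conjectural --- Proposition~\ref{prop:magic-map-exists} proves one exists --- but it is non-explicit and is only known to preserve maximal bottom rows (hence increasing suffixes, Proposition~\ref{prop:psi-maximal-inc-suf}). Swapping two adjacent entries of a colored shuffle in $\Schvec(u)\Schvec(v)$ generally produces a word that is no longer a shuffle of reduced words of $u$ and $v$ (so one leaves $\M$), and even when it does stay in $\M$ there is no control over how an arbitrary equivalence-preserving $\psi$ moves the image: nothing forces the images of two adjacent-swap-related shuffles to be reduced words of permutations related by a down-up Bruhat move. So ``connectivity of shuffles under adjacent swaps'' does not transfer through $\psi$ without substantially more structure (for instance, the explicit $\psi$ of Problem~\ref{problem:Schubert} or the $\xi$-equivariance of Conjecture~\ref{conj:magic-map}), which is again the open part. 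In short, both halves of the proposal reduce the conjecture to statements that are themselves unproven, so the statement remains open.
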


\begin{conjecture} \label{conj:nonback-stableDownup}
    For any $u,v \in S_{\Z_+}$, $\{w \;|\; c_{u,v}^w \neq 0\}$ is down-up Bruhat connected.
\end{conjecture}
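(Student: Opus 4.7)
The plan is to reduce Conjecture~\ref{conj:nonback-stableDownup} to Conjecture~\ref{conj:downup} via the shift $\gamma$, and then to attempt the back-stable version through a Monk-based induction. For $u,v \in S_{\Z_+}$ and $k \geq BS(u,v)$, Theorem~\ref{thm:main-thm} (the stability statement) combined with the shift-invariance of back-stable structure constants gives
\[
\{w \mid c_{\gamma^k(u),\gamma^k(v)}^{w} \neq 0\} = \gamma^k\bigl(\{w' \mid \ol{c_{u,v}^{w'}} \neq 0\}\bigr),
\]
and since $\gamma$ is a Coxeter automorphism, it preserves Bruhat order and hence down-up moves. Consequently, Conjecture~\ref{conj:downup} for $(u,v)$ is equivalent to Conjecture~\ref{conj:nonback-stableDownup} for the shifted pair $(\gamma^k(u),\gamma^k(v))$. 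This reduces the problem to two sub-tasks: (a) proving Conjecture~\ref{conj:downup} in general, and (b) handling pairs $(u,v)\in S_{\Z_+}$ with $BS(u,v)>0$, where $\{w \mid c_{u,v}^w \neq 0\} = V_0(u,v)$ is a proper subset of the (conjecturally connected) back-stable set and connectedness does not automatically descend.

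For part~(a), the base case $v = s_k$ follows immediately from Monk's formula: writing the Monk set as $\{ut_{a,b}\}$ with $\ell(ut_{a,b}) = \ell(u)+1$, any two elements $ut_{a,b}$ and $ut_{a',b'}$ are connected by a single down-up move with Bruhat-intermediate $u$ (taking the transposition pair $(t_{a,b}, t_{a',b'})$, for which $\ell(u t_{a,b} t_{a,b}) = \ell(u) = \ell(u t_{a,b}) - 1$ and $\ell(u t_{a',b'}) = \ell(u) + 1$), so the Monk set is trivially connected. Extending to general $v$ is the main challenge: induction on $\ell(v)$ via $\Sch_{v'}\Sch_{s_k} = \sum\Sch_{v't_{a,b}}$ is hindered by the fact that $\Sch_v$ appears as just one summand, obstructing a positivity-preserving reduction. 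A more promising route uses the colored shuffle algebra of Section~\ref{sec:colored-shuffle-algebra}: apply the (conjectural) bijection $\psi$ of Proposition~\ref{prop:magic-map-exists} to translate elementary local moves on shuffled words in $\Schvec(u)\Schvec(v)$---such as adjacent swaps of differently-colored entries, which preserve maximal bottom row when the swap respects the shuffle structure---into down-up moves on the target side $\sum_w \ol{c_{u,v}^w}\Schvec(w)$.

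For part~(b), the key tool is DC-triviality (Corollary~\ref{cor:dc1}): every $w$ in the back-stable support satisfies $\theta_j(w)=0$ for $j \leq 0$. Combined with the formula~\eqref{eq:BS-def} for $BS(w)$, this restricts how paths in the back-stable down-up graph can exit $S_{\Z_+}$. The strategy is to show that any down-up path between two elements of $V_0(u,v)$ can be rerouted to stay within $V_0(u,v)$, using auxiliary moves respecting the DC-trivial structure. The main obstacle---which I expect to be the dominant one---is precisely this rerouting step: detours outside $S_{\Z_+}$ in the back-stable graph need explicit local substitutes, and we do not yet have uniform combinatorial constructions for these. As a first attempt I would focus on fully proving the Monk base case of part~(a) and the part~(b) rerouting in tandem, using the colored-word framework and $\psi$-bijection techniques that drive the proof of Theorem~\ref{thm:main-thm}, before attacking the inductive extension to arbitrary $v$.
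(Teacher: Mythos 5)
The statement you are attempting is one of the paper's open conjectures: the paper gives no proof of Conjecture~\ref{conj:nonback-stableDownup}, only the closing remark that it is the \emph{stronger} of the two Bruhat-connectedness conjectures --- since $\gamma$ preserves length, Bruhat order and hence down-up moves, the non-back-stable statement implies the back-stable Conjecture~\ref{conj:downup}, not conversely. Your shift argument is correct as far as it goes: for $k \ge BS(u,v)$ the support $\{w \mid c_{\gamma^k(u),\gamma^k(v)}^{w}\neq 0\}$ is the $\gamma^k$-image of the back-stable support, so Conjecture~\ref{conj:downup} would settle exactly the pairs that have already stabilized ($BS(u,v)=0$ after relabeling); and your Monk base case is correct but trivial, since any two terms $ut_{a,b}$ and $ut_{a',b'}$ of a Monk expansion are joined by a single down-up move through the intermediate $u$.

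Beyond these observations, however, nothing is proved, and both essential steps of your plan are genuine gaps. Sub-task (a), Conjecture~\ref{conj:downup} for general $v$, is left entirely open; the proposed route through the map $\psi$ of Proposition~\ref{prop:magic-map-exists} has no concrete content, because the paper only establishes the nonconstructive existence of such a $\psi$ (an explicit description is Problem~\ref{problem:Schubert}), and no dictionary is given between local moves on shuffled colored words and down-up moves among the permutations $w$ with $\ol{c_{u,v}^w}\neq 0$; calling $\psi$ ``conjectural'' also misstates the paper, where only its explicit form and $\xi$-equivariance are open. Sub-task (b), rerouting back-stable down-up paths so that they remain inside $V_0(u,v)$ when $BS(u,v)>0$, is precisely the reason the paper's implication runs only from the non-back-stable conjecture to the back-stable one, and you acknowledge having no construction for it; note this rerouting problem is essentially as hard as the original statement, so the reduction buys little. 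In short, the proposal is a sensible research outline, but it proves only a trivial special case and otherwise defers to problems that are open in the paper itself; it is not a proof, and no proof of this statement exists in the paper to compare against.
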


Permutations $w, w' \in S_{\Z}$ are said to be connected by a \textit{down-up Monk move}, denoted $w \overset{M}{\text{---}} w'$, if there is a down-up Bruhat move $wt_{a,b}t_{c,d} = w'$ such that either $a \leq c < b \leq d$ or $c \leq a < d \leq b$. Equivalently, $w, w'$ are said to be connected by a down-up Monk move if there exists some $u \in S_{\Z}$ and some $i \in \Z$ such that $\ol{c_{u, s_i}^w} \neq 0$ and $\ol{c_{u, s_i}^{w'}} \neq 0$. The fact that these two conditions are equivalent follows from Monk's rule \cite{monk1959geometry}. A set of permutations $S$ is said to be \textit{down-up Monk connected} if for all $s,t \in S$, there exist some $a_1, \ldots, a_n \in S$ such that $s \overset{M}{\text{---}} a_1 \overset{M}{\text{---}} \ldots \overset{M}{\text{---}} a_n \overset{M}{\text{---}} t$.

\begin{conjecture} \label{conj:down-up MonkConnected}
    For any $u, v \in S_{\Z}$, $\{w \;|\; \ol{c_{u,v}^w} \neq 0\}$ is down-up Monk connected.
\end{conjecture}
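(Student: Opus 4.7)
The plan is to prove Conjecture~\ref{conj:down-up MonkConnected} by induction on $\ell(v)$. The base case $\ell(v) \le 1$ is immediate: if $v = \mathrm{id}$ the set $\{w : \ol{c^w_{u,v}} \neq 0\}$ equals $\{u\}$, and if $v = s_i$ every pair of elements in the set is down-up Monk-connected via the common data $(u, s_i)$, so the set itself is Monk-connected.

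For the inductive step, fix a factorization $v = v' s_i$ with $\ell(v') = \ell(v) - 1$ and apply associativity:
\[
\sum_{w' \in S_\Z} \ol{c^{w'}_{u,v'}} \, \ol{\Sch}_{w'} \ol{\Sch}_{s_i} \;=\; \ol{\Sch}_u \ol{\Sch}_{v'} \ol{\Sch}_{s_i} \;=\; \sum_{v''} \ol{c^{v''}_{v', s_i}} \, \ol{\Sch}_u \ol{\Sch}_{v''},
\]
where the right sum is over the finitely many $v''$ of length $\ell(v)$ in Monk's expansion of $\ol{\Sch}_{v'}\ol{\Sch}_{s_i}$, with $v'' = v$ contributing coefficient $1$. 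Taking supports yields
\[
\bigcup_{w' \in A} \{w : \ol{c^w_{w',s_i}} \neq 0\} \;=\; \bigcup_{v''} \{w : \ol{c^w_{u,v''}} \neq 0\},
\]
where $A := \{w' : \ol{c^{w'}_{u,v'}} \neq 0\}$ is Monk-connected by the inductive hypothesis and each set on the left is a Monk product. Given $w_1, w_2 \in \{w : \ol{c^w_{u,v}} \neq 0\}$ there exist $w'_j \in A$ with $w_j \in \{w : \ol{c^w_{w'_j,s_i}} \neq 0\}$; the Monk chain from $w'_1$ to $w'_2$ in $A$ together with the pairwise Monk-connectedness inside each Monk product then yields a candidate chain between $w_1$ and $w_2$.

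The key remaining step -- and the main obstacle -- is to argue that this candidate chain can be chosen to stay inside $\{w : \ol{c^w_{u,v}} \neq 0\}$ rather than passing through the parasitic sets $\{w : \ol{c^w_{u,v''}} \neq 0\}$ for $v'' \neq v$. I would attempt to address this at the level of the colored shuffle algebra of Section~\ref{sec:colored-shuffle-algebra}: lift each $w'_j$ to a shuffled word $\bs{p}_j \in \Schvec(u)\Schvec(v')$, and analyze what happens when one shuffles in an additional color-shifted copy of the letter $i$, which algebraically corresponds to multiplying by $\ol{\Sch}_{s_i}$. A promising tool is the operator $T_{\bs{p}}$ of~\eqref{eq:projection-op-def}, used in the spirit of the proof of Lemma~\ref{lem:crucial-last-step}: applying carefully chosen polynomials in $\nabla$ and $\xi$ to~\eqref{eq:Schub-word-equivalence} could certify the existence of specific ``bridging'' elements $w \in \{w : \ol{c^w_{u,v}} \neq 0\}$ that are Monk-adjacent to prescribed $w_1, w_2$. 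However, $T_{\bs{p}}$ captures only the maximal bottom row of a word, not the full permutation-level data of its image under the hypothetical $\psi$ of Proposition~\ref{prop:magic-map-exists}, so routing the entire chain through the target set likely requires strictly new ingredients -- most naturally a refinement of $\psi$ compatible with appending a single letter, in the spirit of Conjecture~\ref{conj:magic-map}.
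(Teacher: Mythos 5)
This statement is not proved in the paper at all: it is stated as an open conjecture (the paper only shows that it would \emph{imply} part~(a) of Conjecture~\ref{conj:li-back-stable}), so any complete argument here would be genuinely new. Your proposal, however, is not complete, and you essentially concede this yourself. The obstacle you name --- that the candidate chain produced from the identity $\bigcup_{w'\in A}\{w : \ol{c^w_{w',s_i}}\neq 0\} = \bigcup_{v''}\{w : \ol{c^w_{u,v''}}\neq 0\}$ may wander into the ``parasitic'' supports $\{w : \ol{c^w_{u,v''}}\neq 0\}$ with $v''\neq v$ --- is exactly the hard part, and the tools you point to do not close it: the operators $T_{\bs{p}}$, $\nabla$, $\xi$ only see maximal bottom rows (i.e.\ slide-polynomial data), not which permutation a shuffled word ``belongs to,'' and the refinement of $\psi$ you invoke is itself conditional on Conjecture~\ref{conj:magic-map}, which is open. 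So the inductive step is a reduction to something at least as hard as the original statement.

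There is also a second gap, earlier in the argument, that you pass over: even ignoring the parasitic sets, Monk-adjacency of $w'_a$ and $w'_b$ in $A$ does not by itself connect the two Monk products $\{w : \ol{c^w_{w'_a,s_i}}\neq 0\}$ and $\{w : \ol{c^w_{w'_b,s_i}}\neq 0\}$ at the level $\ell(u)+\ell(v)$. Elements \emph{within} one such set are pairwise Monk-adjacent via the common data $(w'_a,s_i)$, but to pass from one set to the other you need either a common element of the two products or a Monk-adjacent pair straddling them, and neither follows from $w'_b = w'_a t_{a,b}t_{c,d}$: by Monk's rule the two products consist of $w'_a t$ and $w'_a t_{a,b}t_{c,d}t'$ for single transpositions $t,t'$ crossing position $i$, and there is no reason these families intersect. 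So even the ``candidate chain'' needs an additional bridging argument before one can worry about keeping it inside $\{w : \ol{c^w_{u,v}}\neq 0\}$. As it stands, the proposal is a plausible strategy outline (induction via Monk's rule is a natural first attack), but both the bridging step and the containment step are missing, and the conjecture remains open.
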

\begin{conjecture} \label{conj:nonback-stabledown-up MonkConnected}
    For any $u, v \in S_{\Z_+}$, $\{w \;|\; c_{u,v}^w \neq 0\}$ is down-up Monk connected.
\end{conjecture}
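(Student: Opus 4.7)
I would proceed by induction on $\ell(v)$. The base case $\ell(v)=0$ is trivial, and $\ell(v)=1$ is immediate from the definition: any two $w$ with $c_{u,s_j}^w\ne 0$ are directly Monk-related via the common data $(u,s_j)$. For the inductive step, fix a descent of $v$ and factor $v=v's_i$ with $\ell(v')=\ell(v)-1$. Let $A=\{w:c_{u,v'}^w\ne 0\}$ (Monk-connected by the inductive hypothesis) and $B=\{w:c_{u,v}^w\ne 0\}$.

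Monk's rule gives $\Sch_{v'}\Sch_{s_i}=\Sch_v+\sum_{v''\ne v}\Sch_{v''}$, hence
\[
\Sch_u\Sch_{v'}\Sch_{s_i}=\Sch_u\Sch_v+\sum_{v''\ne v}\Sch_u\Sch_{v''}.
\]
Expanding the left side as $(\Sch_u\Sch_{v'})\Sch_{s_i}$ and applying Monk's rule once more, every $w\in B$ is of the form $w=w't_{a,b}$ for some $w'\in A$ with $a\le i<b$ and $\ell(w)=\ell(w')+1$; that is, each $w\in B$ is an $s_i$-Monk-child of some $w'\in A$. My strategy is then: given $w_1,w_2\in B$, choose witnesses $w'_1,w'_2\in A$, invoke the inductive chain $w'_1=p_0\overset{M}{\text{---}}\cdots\overset{M}{\text{---}}p_n=w'_2$ in $A$, and lift it to a Monk chain in $B$ from $w_1$ to $w_2$. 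For each fixed $p\in A$, the fiber $\{w:c_{p,s_i}^w\ne 0\}$ is a single Monk class (all elements mutually related via $(p,s_i)$), so motion within a fiber is free; the real work lies in bridging adjacent fibers.

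For a link $p_{j-1}\overset{M}{\text{---}}p_j$ arising from common data $(u'',s_{i'})$, the natural attempt is to exploit the commuting square
\[
\Sch_{u''}\Sch_{s_{i'}}\Sch_{s_i}=\Sch_{u''}\Sch_{s_i}\Sch_{s_{i'}}
\]
to exhibit a common ancestor visible to $s_i$-children of both $p_{j-1}$ and $p_j$, producing a short Monk path between their fibers. Carrying this out requires verification but looks tractable in the larger set $B^+:=\{w't_{a,b}:w'\in A,\ a\le i<b,\ \ell\text{ up by }1\}$, yielding Monk-connectedness of $B^+$.

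The main obstacle will be descending from Monk-connectedness of $B^+$ to that of $B$. A priori, the sum $\sum_{v''\ne v}\Sch_u\Sch_{v''}$ may contribute Schubert terms that place some $w\in B^+\setminus B$; Schubert positivity prevents cancellation but does not automatically let one route a Monk chain around the bad vertices. Addressing this will likely require either an explicit combinatorial model for $c_{u,v}^w$ (pipe dreams, bumpless pipe dreams, or a suitable refinement of the increasing-suffix/colored-word calculus of Sections~\ref{sec:colored-shuffle-algebra}--\ref{sec:main-thm-proof}) in which Monk moves lift to local reconfigurations preserving membership in $B$, or a finer invariant of $\Schvec(u)\Schvec(v)$ inside $\mcR$ that distinguishes the $v$-contribution from the $v''$-contributions. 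I expect the descent step from $B^+$ to $B$ to be the crux of the proof; in particular, a solution to Conjecture~\ref{conj:magic-map} would likely supply the bookkeeping needed to carry it out.
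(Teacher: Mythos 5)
First, a point of orientation: the statement you are proving is not a theorem of the paper at all --- it is stated as Conjecture~\ref{conj:nonback-stabledown-up MonkConnected}, the strongest of the four down-up connectedness conjectures in Section~\ref{sec:down-up-connectedness}, and the paper offers no proof of it (it only shows that the weaker Conjecture~\ref{conj:down-up MonkConnected} would already yield an alternate proof of $BS(u,v)=St(u,v)$). So there is no proof in the paper to compare against, and the question is whether your proposal closes the conjecture on its own. It does not.

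Two concrete gaps remain. First, even your claim of Monk-connectedness of the enlarged set $B^{+}$ (all $s_i$-Monk children of elements of $A=\{w':c_{u,v'}^{w'}\ne 0\}$) is not established: the ``commuting square'' step, which is supposed to bridge the fibers over $p_{j-1}$ and $p_j$ when $p_{j-1}\overset{M}{\text{---}}p_j$ via data $(u'',s_{i'})$, is only asserted to ``look tractable''; note that the witnessing $u''$ in a Monk move is an arbitrary permutation unrelated to $u,v',s_i$, so associativity of $\Sch_{u''}\Sch_{s_{i'}}\Sch_{s_i}$ does not obviously produce a common neighbor of the two fibers, and no argument is given. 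Second, and more fundamentally, down-up Monk connectedness of $B=\{w:c_{u,v}^w\ne 0\}$ requires every intermediate vertex $a_1,\ldots,a_n$ of the chain to lie in $B$ itself. Your lifted chains pass through $s_i$-children of the intermediate $p_j\in A$, and positivity only places those children in the support of $\Sch_u\Sch_{v'}\Sch_{s_i}$; they may well lie in the supports of the spurious terms $\Sch_u\Sch_{v''}$ and outside $B$ entirely (indeed, some $p_j$ on the chain in $A$ may have no $s_i$-child in $B$ at all). You acknowledge this descent from $B^{+}$ to $B$ as the crux, but no mechanism is proposed to route around such vertices --- invoking a hoped-for solution to Conjecture~\ref{conj:magic-map} is not an argument. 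As it stands the proposal is a plausible reduction strategy, not a proof, and the statement should be treated as open.
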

If Conjecture \ref{conj:down-up MonkConnected} is true, then there is a simple proof that $\BS(u,v) = \St(u,v)$.
\begin{lemma}
    Given $w \overset{M}{\text{---}} w'$, $|\BS(w) - \BS(w')| \leq 1$.
\end{lemma}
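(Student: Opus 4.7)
The plan is to use the down-up Monk move definition to produce $v = wt_{ab}$ with $\ell(v) = \ell(w)-1$ and $w' = vt_{cd}$ (both length-increasing from $v$), satisfying --- WLOG by swapping the roles of $w$ and $w'$ --- the interlacing $a \leq c < b \leq d$. Setting $K = BS(v)$, I would show that $BS(w), BS(w') \in \{K, K+1\}$, which gives $|BS(w) - BS(w')| \leq 1$ immediately.

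\medskip

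First, since $v \leq w$ in Bruhat order, every reduced word for $w$ contains a reduced word for $v$ as a subword, so $\min(\supp \bs{w}) \leq \min(\supp \bs{v})$. Combined with the identity $\min(\supp \bs{u}) = \min\{j : u(j) \neq j\}$ for any nonidentity $u$ (which holds because $u$ restricts to a nontrivial permutation of $\{m(u), m(u)+1,\ldots\}$, so its reduced words use only simple reflections $s_j$ with $j \geq m(u)$), we get $BS(w) \geq BS(v) = K$, and symmetrically $BS(w') \geq K$.

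\medskip

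For the matching upper bound, I would case-analyze $(a,b)$, using that $v$ fixes every $j \leq -K$:
\begin{itemize}
\item If $a \geq -K+1$, then $t_{ab}$ acts only on positions $> -K$, so $w = vt_{ab}$ still fixes $\leq -K$, giving $BS(w) \leq K$, hence $BS(w) = K$.
\item If $a \leq -K$, I claim the length-increasing condition plus the Monk interlacing force $a = -K$ and $b > -K$; then $w(-K) = v(b) > -K$ while $w$ fixes $\leq -K-1$, so $BS(w) = K+1$.
\end{itemize}

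\medskip

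The crux is the second bullet. Suppose $a \leq -K-1$. Since $v$ fixes $a, a+1, \ldots, -K$, any $k \in (a,b)$ with $k \leq -K$ would give $v(a) = a < k = v(k) < v(b)$, violating the length-increasing requirement for $t_{ab}$; hence length-increasing forces $b = a+1$, i.e., $t_{ab} = s_a$. But then the Monk condition $a \leq c < b \leq d$ yields $c = a$ and $d \geq b = a+1$, and a parallel check on $t_{cd}$ (again using that $v$ fixes $[a,-K]$) shows the only length-increasing choice is $(c,d) = (a,b)$, which forces $w' = w$, a contradiction. The symmetric case $c \leq a < d \leq b$ is handled identically. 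Applying the same analysis to $(c,d)$ gives $BS(w') \in \{K, K+1\}$, completing the proof.

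\medskip

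The main obstacle is isolating and excluding the ``deep simple reflection'' case $a \leq -K-1$, $b = a+1$: without the Monk interlacing, such a cover $v \mapsto vs_a$ can cause $BS$ to jump by an arbitrarily large amount (namely to $1-a$), so the proof crucially uses that $(a,b)$ is paired with a Monk partner $(c,d)$ obeying the interlacing condition --- this is exactly what distinguishes a Monk move from an arbitrary down-up Bruhat move.
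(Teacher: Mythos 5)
Your proof is correct, but it takes a genuinely different route from the paper's. You work directly with the transposition form of the Monk move: anchoring at $K = BS(v)$ for the common lower cover $v = wt_{a,b} = w't_{c,d}$, you get the lower bound $BS(w), BS(w') \ge K$ from the subword property of Bruhat order, and the upper bound from the standard covering criterion ($\ell(vt_{a,b}) = \ell(v)+1$ forces no position strictly between $a$ and $b$ whose $v$-value lies strictly between $v(a)$ and $v(b)$), which together with the interlacing $a \le c < b \le d$ eliminates the one dangerous configuration ($t_{a,b} = s_a$ with $a \le -K-1$, which for a bare down-up Bruhat move could drop $BS$ by an arbitrary amount) by collapsing it to $(c,d) = (a,b)$. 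The paper instead invokes the equivalent Monk's-rule characterization --- some $u$ and $i$ with $\ol{c_{u,s_i}^w} \neq 0 \neq \ol{c_{u,s_i}^{w'}}$ --- anchors at $\max(BS(u), 1-i)$, gets its lower bound likewise from Bruhat containment, and gets its upper bound from DC-triviality (Lemma \ref{lem:Knutson-dc-triv}) combined with a length count on reduced words. Your argument is more elementary and self-contained (pure Coxeter/one-line-notation combinatorics, needing nothing beyond the definition of the move), and it pinpoints exactly why Monk moves, rather than arbitrary down-up Bruhat moves, are required; the paper's argument reuses machinery already developed and stays in the structure-constant language of Section \ref{sec:down-up-connectedness}. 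One small wording fix: since the definition of a down-up (Monk) move does not exclude $w' = w$, the conclusion of your deep case should read ``hence $(c,d) = (a,b)$ and $w' = w$, in which case the claim is trivial'' rather than calling it a contradiction; this does not affect the validity of the argument.
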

\begin{proof}
    Given $u$ and $i$ such that $\ol{c_{u, s_i}^w} \neq 0$ and $\ol{c_{u, s_i}^{w'}} \neq 0$, it suffices to show that $\BS(w), \BS(w') \in \{\max(\BS(u), 1-i), 1 + \max(\BS(u), 1-i)\}$.

    $\ol{c_{u,s_i}^w} \neq 0$ implies that $u < w$ in the Bruhat order, so any reduced word of $u$ is a subword of a reduced word for $w$. Similarly, $s_i < w$. So, $\max(\BS(u), 1-i) \leq \BS(w)$.

    Let $x$ be the minimum entry in any reduced word for $u$ (equivalently, the minimum index such that $\theta_x(u) = 1$). Let $y  = \min(x, i)$. For any $z < y - 1$, assume $\theta_z(w) = 1$. If $\theta_{z+1}(w) = 0$, then $w$ has a descent at $z$ but $u$ and $s_i$ don't which contradicts DC-triviality (Lemma \ref{lem:Knutson-dc-triv}). If $\theta_{z+1}(w) \neq 0$, then pick $\bs{w} \in \RW(w)$. $z \in \bs{w}$, $z + 1 \in \bs{w}$, and a reduced word for $u$ is a subword of $\bs{w}$. So $\ell(w) = \ell(\bs{w}) \geq 2 + \ell(u)$ which is a contradiction. So, for all $z < y-1$, $\theta_z(w) = 0$. This implies that $\BS(w) \leq 1 - (y - 1) = 1 + (1 - \min(x, i)) \leq 1 + \max(\BS(u), 1-i)$.
    
    The same reasoning shows that $\max(\BS(u), 1-i) \leq \BS(w') \leq 1 + \max(\BS(u), 1-i)$.
\end{proof}
\begin{proposition}
    Conjecture \ref{conj:down-up MonkConnected} implies $\BS(u,v) = \St(u,v)$ for all $u,v \in S_{\Z_+}$.
\end{proposition}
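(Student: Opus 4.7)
The plan is to translate the statement $BS(u,v) = St(u,v)$ into a statement about the set of back-stabilization numbers that arise in the back-stable expansion of $\ol{\Sch}_u \ol{\Sch}_v$, and then use down-up Monk connectedness to show this set has no gaps.

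First, I would rewrite $V_k(u,v)$ in terms of back-stable data. Since $\ol{c_{u,v}^w}$ is invariant under the simultaneous shift $u,v,w \mapsto \gamma(u), \gamma(v), \gamma(w)$, and since $c_{u',v'}^{w'} = \ol{c_{u',v'}^{w'}}$ when all three permutations lie in $S_{\Z_+}$ (and is $0$ otherwise), the substitution $w' = \gamma^k(w)$ gives
\[
V_k(u,v) = \bigl\{w \in S_\Z \;\big|\; \ol{c_{u,v}^w} \neq 0 \text{ and } BS(w) \leq k\bigr\}.
\]
In particular, if we define $\mathcal{W}(u,v) := \{w \in S_\Z \;|\; \ol{c_{u,v}^w} \neq 0\}$ and $\Sigma(u,v) := \{BS(w) \;|\; w \in \mathcal{W}(u,v)\} \subseteq \Z_{\ge 0}$, then $V_k = V_{k+1}$ if and only if $k+1 \notin \Sigma(u,v)$, and $BS(u,v) = \max \Sigma(u,v)$.

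Next, I would observe that $0 \in \Sigma(u,v)$: since $u,v \in S_{\Z_+}$, the ordinary product $\Sch_u \Sch_v$ is a nonzero positive sum of $\Sch_w$ with $w \in S_{\Z_+}$, producing at least one $w \in \mathcal{W}(u,v)$ with $BS(w) = 0$. Now I would invoke Conjecture~\ref{conj:down-up MonkConnected}, which asserts that $\mathcal{W}(u,v)$ is down-up Monk connected, together with the lemma immediately preceding the statement, which says $|BS(w) - BS(w')| \leq 1$ whenever $w \overset{M}{\text{---}} w'$. Walking along any Monk path in $\mathcal{W}(u,v)$ changes $BS$ by at most $1$ at each step, so $\Sigma(u,v)$ is the image of a connected graph under a function of bounded variation $1$, hence is a contiguous interval of nonnegative integers. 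Combined with $0 \in \Sigma(u,v)$, this gives $\Sigma(u,v) = \{0, 1, \ldots, BS(u,v)\}$.

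Finally, to conclude, I would argue as follows. If $St(u,v) = j$, then $V_j = V_{j+1}$, so $j+1 \notin \Sigma(u,v)$. Since $\Sigma(u,v)$ contains no gaps, this forces $\Sigma(u,v) \subseteq \{0,1,\ldots,j\}$, hence $BS(u,v) = \max \Sigma(u,v) \leq j = St(u,v)$. The reverse inequality $St(u,v) \leq BS(u,v)$ is immediate from the definitions, so equality holds. The only nontrivial input beyond the already-established lemma is the gap-freeness of $\Sigma(u,v)$, and this is precisely what the Monk connectedness conjecture supplies; no further obstacle arises.
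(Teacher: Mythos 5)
Your proposal is correct and follows essentially the same route as the paper: both arguments use the preceding lemma together with a down-up Monk path between a permutation achieving the maximal back-stabilization number and one with back-stabilization number $0$, applying a discrete intermediate-value argument to conclude that every value in $\{0,1,\ldots,BS(u,v)\}$ is realized, hence $St(u,v)=BS(u,v)$. Your reformulation of $V_k(u,v)$ and the set $\Sigma(u,v)$ just makes explicit what the paper leaves implicit; there is no substantive difference.
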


\begin{proof}
    Let $w$ be a permutation such that $\ol{c_{u,v}^w} \neq 0$ and $\BS(w) = \BS(u,v)$. Let $w'$ be a permutation such that $\BS(w') = 0$ and $\ol{c_{u,v}^{w'}} \neq 0$. By down-up Monk connectedness, we can find $a_1, \ldots, a_n \in \{w \;|\; \ol{c_{u,v}^w} \neq 0\}$ such that $w \overset{M}{\text{---}} a_1 \overset{M}{\text{---}} \ldots \overset{M}{\text{---}} a_n \overset{M}{\text{---}} w'$. Then we have a list of permutations where the distance in back-stabilization number between adjacent permutations is at most $1$, so in this list we must have a permutation with back-stabilization number $i$ for all $\BS(u,v) = \BS(w) \leq i \leq \BS(w') = 0$, so $\BS(u,v) = \St(u,v)$.
\end{proof}

\begin{remark}
    Conjectures \ref{conj:downup}, \ref{conj:nonback-stableDownup}, \ref{conj:down-up MonkConnected}, and \ref{conj:nonback-stabledown-up MonkConnected} are partially ordered in terms of strength with $\ref{conj:nonback-stabledown-up MonkConnected} \implies \ref{conj:down-up MonkConnected} \implies \ref{conj:downup}$ and $\ref{conj:nonback-stabledown-up MonkConnected} \implies \ref{conj:nonback-stableDownup} \implies  \ref{conj:downup}$. Two of these implications are because down-up Monk connectedness is a stricter condition that down-up connectedness. The other two are because applying $\gamma$ preserves down-up (Monk) connectedness, so if the non-back-stable structure constants are down-up (Monk) connected, we can apply appropriate shifting to show that the back-stable structure constants are down-up (Monk) connected.
\end{remark}

\section{Back-stable key polynomials} \label{sec:back-stable-keys}

Key polynomials, also known as Demazure characters, are indexed by \emph{positive compositions}, finite sequences $\alpha = (\alpha_1,\alpha_2,\ldots)$ of nonnegative integers, and are denoted $\kappa_{\alpha}$. $\alpha$ is said to have $\alpha_i$ \emph{parts} of size $i$, and the word ``positive'' in our terminology refers to the fact that all parts of $\alpha$ are positive.

Key polynomials were originally studied by Demazure \cite{demazure1974nouvelle} in geometric contexts, and later more combinatorially by Lascoux and Sch\"utzenberger.

Let $\alpha$ be a positive composition, and let $\lambda$ be the partition formed by reordering of the parts of $\alpha$. Let $u(\alpha)$ be the shortest-length permutation such that $\alpha = u(\alpha)\lambda$, where the action is by permutation of parts.

Then the Demazure character is given by:
\[
\kappa_\alpha = \pi_{u(\alpha)} x^\lambda, \qquad\qquad x^\lambda = x_1^{\lambda_1}x_2^{\lambda_2}\cdots,
\]
where $\pi_{s_{i_1}\cdots s_{i_n}} = \pi_{i_1}\cdots \pi_{i_n}$, and $\pi_i$ is the isobaric divided-difference operator
\[
\pi_i = \frac{1-s_i}{x_i-x_{i+1}} x_i.
\]

This definition does not lend itself to back-stabilization: the unit shift $\gamma$ adds a leading 0 to $\alpha$, so $u(\alpha) < u(\gamma(\alpha))$, and iterating $\gamma$ causes $u$ to grow without bound.

Instead, we use an alternate formula for key polynomials due to Reiner and Shimozono \cite{ReinerShimozono}. The \textit{nilplactic equivalence} relation on words, denoted with $\nilsim$, is the symmetric transitive closure of

\begin{equation}
\begin{aligned}
    \bs{a} \circ (i, i+1, i) \circ \bs{b} &\; \nilsim \; \bs{a} \circ (i+1, i, i+1) \circ \bs{b}, \\
    \bs{a} \circ (x,z,y) \circ \bs{b} &\; \nilsim \; \bs{a} \circ (x,y,z) \circ \bs{b}, \\
    \bs{a} \circ (y,x,z) \circ \bs{b} &\; \nilsim \; \bs{a} \circ (y,z,x) \circ \bs{b}
\end{aligned}
\end{equation}

A \textit{column strict tableau} is a filling of a Ferrers diagram with integers which is weakly increasing in each row and strictly increasing down each column. A \textit{positive column strict tableau} is defined similarly, with the extra the constraint that the integers must be positive. The \textit{column reading word} of the column strict tableau $T$ is $\text{column}(T) = \bs{v_1} \circ \bs{v_2} \circ \ldots \circ \bs{v_j}$ where $\bs{v_j}$ is the column word (strictly decreasing word) comprising the $j$th column of $T$. The \textit{row reading word} of $T$, $\text{row}(T) = \bs{v_1} \circ \bs{v_2} \circ \ldots \circ \bs{v_j}$, is the row word (strictly increasing word) comprising the $j$th row of $T$. The \textit{content} of $T$ is the composition $\text{content}(T)$ such that $\text{content}(T)_i$
is the number of occurrences of the letter $i$ in $T$. Note that $\text{content}(T)$ is a positive composition if and only if the entries of $T$ are positive.

Define \textit{row insertion} of the word \( \bs{v} = (v_1, v_2, \cdots, v_n) \) into the column strict tableau \( T \) as follows. Let \( P_k \) denote the unique column strict tableau such that 
\[
\operatorname{row}(P_k) \nilsim \operatorname{row}(T) \circ \bs{v}_{1,k},
\]
for \( 0 \leq k \leq n \) (where \( P_0 = T \)). Then write \( (T \leftarrow \bs{v}) = P_n \).

Let \( v \) be a word. Consider the \textit{column word factorization} of \( \bs{v} \), i.e., writing
\[ \bs{v} = \bs{v_1} \circ \bs{v_2} \circ \cdots, \]
where each \( \bs{v_i} \) is a maximal column word. Define the \textit{column form} of \( \bs{v} \) to be the positive composition with $\text{colform}(\bs{v})_i = \ell(\bs{v_i})$. Let \( (\varnothing \leftarrow \bs{v}) = P \). Say that \( \bs{v} \) is a \textit{column-frank word} if \( \text{colform}(\bs{v}) \) is a rearrangement of the nonzero parts of \( \lambda' \), where \( \lambda \) is the shape of \( P \) and \( \lambda' \) denotes the conjugate shape of \( \lambda \). 

Let \( P \) be a column strict tableau of shape \( \lambda \). The \textit{left nil key} of \( P \), denoted \( K^{\bullet}_-(P) \) is the key of shape \( \lambda \) whose \( j \)th column is given by the first column of any column-frank word \( \bs{v} \) such that \( \bs{v} \nilsim P \) and \text{colform}( \( \bs{v} \) ) is of the form \( (\lambda'_j, \ldots) \). One can show that \( K^{\bullet}_-(P) \) is well defined.

Reiner and Shimozono showed that \cite[Theorem~5]{ReinerShimozono}
\begin{equation}
    \kappa_\alpha = \sum_{\text{rev}(\bs{a}) \nilsim P} \F_{\bs{a}}
\end{equation}
Where $P$ is any positive column strict tableaux with a reduced column reading word and which satisfies $\text{content}(K^{\bullet}_-(P)) = \alpha$.

The right side of this formula back-stabilizes by replacing $\F_{\bs{a}}$ with $\olF_{\bs{a}}$, leading naturally to the following definition for back-stable key polynomials. (We thank Vic Reiner for suggesting this definition).

We now relax our definitions of composition and column strict tableaux, so that they may have nonpositive parts. Otherwise, all definitions above are unchanged. We define.
\begin{definition}
    The \textit{back-stable key polynomial} for composition $\alpha$ is
    \begin{equation}
    \olK_\alpha = \sum_{\text{rev}(\bs{a}) \nilsim P} \olF_{\bs{a}},
\end{equation}
Where $P$ is any column strict tableaux with a reduced column reading word and which satisfies $\text{content}(K^{\bullet}_-(P)) = \alpha$.
\end{definition}

If $\alpha$ has nonpositive parts, repeated shifts $\gamma^k$ results in a positive composition. For large enough $k$, we have:
\begin{equation} \label{eq:key-stabilization}
\gamma^k(\olK_\alpha)|_{\cdots = x_{-2} = x_{-1} = x_0 = 0} = \kappa_{\gamma^k(\alpha)}.
\end{equation}

\begin{proposition}
    $\olK_\alpha$ is back-symmetric.
\end{proposition}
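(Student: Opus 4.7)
\textbf{Plan}. I will show $\olK_\alpha \in \ol{R}$ by establishing $s_i$-invariance of $\olK_\alpha$ for all sufficiently negative $i$. Since each $\olF_{\bs{a}}$ lies in $\ol{Q}$ by Lemma~\ref{lem:back-quasi-sym}, so does the finite sum $\olK_\alpha$; combined with $s_i$-invariance, Proposition~\ref{prop:back-quasi} then gives $\olK_\alpha \in \Lambda_{(-\infty,b]} \otimes \Poly_\Z \subseteq \ol{R}$. Nilplactic equivalence preserves both reducedness and the underlying permutation, so every $\bs{a}$ with $\mathrm{rev}(\bs{a}) \nilsim P$ is a reduced word for a single permutation $w$, and hence all such $\bs{a}$ share the finite support $T = \{j : s_j \leq w\}$. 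I would set $b = \min(T) - 1$, so that for every $i \leq b-1$ neither $i$ nor $i+1$ appears in any $\bs{a}$ in the sum.

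\textbf{Reduction to a combinatorial pairing}. Fix $i \leq b-1$. For each pair $(\bs{a}, \bs{b})$ with $\mathrm{rev}(\bs{a}) \nilsim P$ and $\bs{b} \in \BR(\bs{a})$, the positions of $\bs{b}$ taking values in $\{i, i+1\}$ form a contiguous block $[l_1, l_2]$, since $\bs{b}$ is weakly increasing. Because every entry of $\bs{a}$ exceeds $i+1$, the only constraint imposed on this block by $\bs{a}$ is the forced strict increase of $\bs{b}$ at each increase position of $\bs{a}$ inside $[l_1, l_2-1]$; moreover, at most one such position can occur, for otherwise two strict increases among values in $\{i, i+1\}$ would be required. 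When $\bs{a}$ has no increase in $[l_1, l_2-1]$, every split of multiplicities $(k,m)$ with $k+m = l_2-l_1+1$ is realized, and these yield the manifestly symmetric contribution $\sum_{k+m = l_2-l_1+1} x_i^k x_{i+1}^m$ to the relevant coefficient of $\olK_\alpha$. The delicate case is when $\bs{a}$ has a single increase at some position $j \in [l_1, l_2-1]$, which forces the asymmetric split $k = j-l_1+1$, $m = l_2 - j$.

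\textbf{Main step and obstacle}. To reinstate $s_i$-invariance, I would pair each such $\bs{a}$ with an $\bs{a}'$ satisfying $\mathrm{rev}(\bs{a}') \nilsim P$ whose unique increase in $[l_1, l_2-1]$ sits at the reflected position $l_1 + l_2 - j$; this partner would contribute the transposed split $(m,k)$ and restore the symmetry. I expect such pairings to arise from a controlled sequence of nilplactic Knuth moves applied to the subword $\bs{a}|_{[l_1, l_2]}$, exploiting the fact that all its entries exceed $i+1$, so no braid relation triggers and the row/column moves act freely on the triple patterns they create. The main obstacle is verifying that the resulting sequence of moves stays inside the nilplactic class of $P$ and that the pairing is indeed bijective at the level of multiplicities. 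A more conceptual alternative would be to recognize $\olK_\alpha$ as the character of a back-stable Demazure crystal and invoke Weyl-group invariance of such characters restricted to the parabolic generated by $\{s_i : i \leq b-1\}$, bypassing the direct bijection at the cost of setting up the appropriate back-stable crystal framework.
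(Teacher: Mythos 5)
Your reduction is set up sensibly (fixing $i$ far below the common support of the class, blocking the bottom-row entries equal to $i$ or $i+1$, and observing that at most one forced strict increase can fall inside the block), but the argument stops exactly where the real content begins. The ``main step'' --- pairing each word $\bs{a}$ in the nilplactic class whose unique increase in the block sits at position $j$ with a partner $\bs{a}'$ in the \emph{same} class whose increase sits at the reflected position --- is not constructed; you only say you ``expect'' it to come from Knuth moves on the subword $\bs{a}|_{[l_1,l_2]}$. That cannot work as described: since every entry of $\bs{a}$ exceeds $i+1$, the letters $i,i+1$ never appear in $\bs{a}$ at all, so the relevant data is the descent (increase) composition of $\bs{a}$, not any local subword in the block, and the required involution genuinely moves you to a different word of the class in a way governed by Edelman--Greene recording data (it is essentially a Bender--Knuth involution transported through EG insertion). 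Proving that the multiset of descent compositions over a Coxeter--Knuth class yields a symmetric generating function is precisely the Edelman--Greene/Stanley symmetric function theorem; your proposal implicitly assumes it rather than proving it, and the crystal-theoretic alternative you mention is likewise only sketched. There is also a smaller inaccuracy: with your choice $b=\min(T)-1$, the constraint $\bs{b}\le m(\bs{a})$ can still bite inside the block, because the maximal bottom row $m(\bs{a})$ can dip below $\min\supp\bs{a}$ along increasing runs (e.g.\ $m((4,5,6,3))=(1,2,3,3)$), so ``the only constraint is the forced strict increase'' is false as stated; this is fixable by taking $i$ below $\min\supp\bs{a}-\ell(\bs{a})$, but it should be addressed.

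For comparison, the paper's proof is a one-line reduction to known results: the stable limits of key polynomials are Schur functions (citing Assaf--Searles), hence $\olK_\alpha$ is symmetric in the far-back variables. In other words, the paper outsources exactly the symmetry statement that your pairing would have to reprove from scratch. If you want a self-contained combinatorial proof along your lines, the cleanest route is to quote (or reprove) the Edelman--Greene fact that the sum of $\olF_{\bs{a}}$ over a Coxeter--Knuth class is, sufficiently far back, a Schur/Stanley symmetric function; without that ingredient the proposal has a genuine gap.
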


\begin{proof}
This follows from the fact that the stable limits of key polynomials are Schur functions (see e.g. \cite[Corollary~4.9]{AssafSearles-Kohnert}).
\end{proof}

\begin{proposition}
Back-stable key polynomials form a basis for $\ol{R}$.
\end{proposition}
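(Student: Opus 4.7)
The plan is to prove linear independence and spanning separately, each step reducing via the back-stabilization relation \eqref{eq:key-stabilization} to the corresponding classical fact about ordinary key polynomials in the polynomial ring.

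First I would handle linear independence. Suppose $\sum_{\alpha \in S} c_\alpha \olK_\alpha = 0$ in $\ol{R}$ for some finite index set $S$ of compositions and coefficients $c_\alpha \in \Q$. Choose $k$ large enough that $\gamma^k(\alpha)$ is a positive composition for every $\alpha \in S$ (possible since $S$ is finite and each $\alpha$ has only finitely many nonzero parts). The shift $\gamma^k$ is an injective linear endomorphism of $\ol{R}$, so $\sum_\alpha c_\alpha \gamma^k(\olK_\alpha) = 0$ as well. Specializing $x_j \mapsto 0$ for all $j \le 0$ and applying \eqref{eq:key-stabilization} gives
\[
\sum_{\alpha \in S} c_\alpha \kappa_{\gamma^k(\alpha)} = 0 \quad \text{in } \Q[x_1, x_2, \ldots].
\]
Since the shifts $\gamma^k(\alpha)$ are pairwise distinct and ordinary key polynomials are a basis of the polynomial ring, all $c_\alpha = 0$.

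For spanning, by Theorem~\ref{thm:word-basis} it suffices to show each back-stable Schubert polynomial $\ol{\Sch}_w$ lies in the $\Q$-span of back-stable keys. In the polynomial ring, the classical Lascoux-Sch\"utzenberger decomposition (recovered combinatorially in \cite{ReinerShimozono}) gives nonnegative integers $d_{w,\alpha}$ with $\Sch_w = \sum_\alpha d_{w,\alpha} \kappa_\alpha$. Both sides of this identity are positive sums of fundamental slide polynomials $\F_{\bs{a}}$ indexed by reduced words, with the key polynomials corresponding to partitioning reduced words by nilplactic equivalence. Since this combinatorial bookkeeping depends only on relative comparisons of entries of reduced words, it is insensitive to simultaneous shifting and therefore lifts verbatim: partitioning $\RW(w)$ into nilplactic equivalence classes and summing $\olF_{\bs{w}}$ over each class yields a back-stable key $\olK_{\alpha'}$ for an appropriate composition $\alpha'$, and summing over all classes reconstructs $\ol{\Sch}_w = \sum_{\bs{w} \in \RW(w)} \olF_{\bs{w}}$. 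Hence $\ol{\Sch}_w$ is a nonnegative integer combination of back-stable keys.

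The main obstacle will be the spanning step, specifically verifying carefully that the nilplactic-class bookkeeping on $\RW(w)$ for $w \in S_\Z$ produces precisely the back-stable compositions $\alpha'$ that appear in the definition of $\olK_{\alpha'}$ (with the relaxed convention allowing nonpositive parts), and that every such $\alpha'$ arises this way. This amounts to matching the nil key tableau combinatorics of Reiner-Shimozono with the back-stable composition indexing; once this is checked, the positive expansion $\ol{\Sch}_w = \sum_{\alpha} d_{w,\alpha'} \olK_{\alpha'}$ follows, and combined with linear independence this proves that $\{\olK_\alpha\}$ is a $\Q$-basis of $\ol{R}$.
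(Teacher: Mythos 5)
Your proposal is correct and follows essentially the same route as the paper: linear independence by shifting with $\gamma^k$ and specializing $x_j\mapsto 0$ for $j\le 0$ via \eqref{eq:key-stabilization} to reduce to the ordinary key polynomials, and spanning by partitioning $\RW(w)$ into nilplactic classes so that $\ol{\Sch}_w$ becomes a nonnegative sum of back-stable keys, exactly as in the paper's proof and accompanying remark that both families share the same slide-polynomial expansions.
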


\begin{proof}
Linear independence follows from the linear independence of the $\kappa_\alpha$ since if $\alpha$ is a positive composition, $\olK_\alpha\mapsto\kappa_\alpha$ under the specialization $\cdots = x_{-2}=x_{-1}=x_0=0$. If a composition in the linear combination is not positive, apply \eqref{eq:key-stabilization}.

The $\olK_\alpha$ span $\ol{R}$ since the back-stable Schubert polynomials $\ol{\Sch}_w$ span $\ol{R}$ and
\[
\ol{\Sch}_w = \sum_\alpha \olK_\alpha,
\]
where the sum is over all compositions $\alpha$ such that there exists $\bs{a}\in\RW(w)$ with $\text{rev}(\bs{a})\nilsim\text{key}(\alpha)$.
\end{proof}

\begin{remark}
The expansion formula for back-stable Schuberts into back-stable keys is the same as in the non-back-stable case, since both families have the same expansions in terms of slide polynomials.
\end{remark}

Finally, we conjecture a formula for the action of $\xi$ on $\olK_\alpha$. Given some composition $\alpha$, let $\eta(\alpha) = \{i \;|\;\; \nexists j < i, \; \alpha_j = \alpha_i\}$. Let $\delta_i$ be the composition be composition which is $1$ at index $i$ and $0$ everywhere else. Define composition addition and subtraction component-wise.

\begin{conjecture}
    For any composition $\alpha$, $$\xi(\olK_\alpha) = \sum_{i \in \eta(\alpha)} \olK_{\alpha - \delta_i}$$
\end{conjecture}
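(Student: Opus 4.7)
The plan is to work directly with the Reiner--Shimozono description
\[
    \olK_\alpha = \sum_{\text{rev}(\bs{a}) \nilsim P} \olF_{\bs{a}}
\]
and reduce the conjecture to a combinatorial statement about nilplactic classes of column strict tableaux. Since $\xi$ acts on colored words by $\xi(p_1,\ldots,p_k)=(p_2,\ldots,p_k)$ (Definition~\ref{def:nabla-xi-word}) and commutes with the map $\phi$ from $\QAlg$ to $\ol{Q}$ sending a word to its back-stable slide polynomial, the first move is to apply $\xi$ term-by-term to the sum above. Setting $\bs{b} = \text{rev}(\bs{a})$, removing the first letter of $\bs{a}$ amounts to removing the last letter of $\bs{b}$, so
\[
    \xi(\olK_\alpha) = \sum_{\bs{b} \nilsim P} \olF_{\text{rev}(\bs{b}^-)},
\]
where $\bs{b}^-$ denotes $\bs{b}$ with its last entry deleted.

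Next I would partition the nilplactic class $N(P) := \{\bs{b} : \bs{b} \nilsim P\}$ according to the nilplactic class of $\bs{b}^-$. The fundamental claim is that as $\bs{b}$ varies over $N(P)$, the words $\bs{b}^-$ fill out the nilplactic classes of the tableaux $P \setminus c$ obtained from $P$ by deleting a removable corner $c$, with each such class being achieved so that the contribution to $\xi(\olK_\alpha)$ is exactly one copy of $\olK_{\alpha'}$ for $\alpha' = \text{content}(K^{\bullet}_-(P\setminus c))$. This should follow from the nilplactic analogue of inverse column insertion: removable corners of $P$ correspond bijectively to valid uninsertion moves, and the nilplactic relations are compatible with deleting the trailing entry produced by such a move.

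With this reduction, the conjecture becomes: the multiset $\{\text{content}(K^{\bullet}_-(P \setminus c))\}$ over removable corners $c$ of $P$ equals the multiset $\{\alpha - \delta_i : i \in \eta(\alpha)\}$, each element occurring once. To prove this I would analyze how the left nil key $K^{\bullet}_-(P)$ transforms under corner removal. The key observation is that $K^{\bullet}_-(P)$, being a key, has a canonical first-occurrence column structure: repeated values in the content $\alpha$ come from columns of identical length, and corner removals that affect only a non-leftmost instance of a repeated column length yield an $\alpha'$ identical to one already produced by an earlier (first-occurrence) corner; conversely, each $i \in \eta(\alpha)$ is realized by exactly one corner whose removal genuinely decrements $\alpha_i$ in the content of the left nil key.

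The main obstacle is this third step: carefully tracking how the content of $K^{\bullet}_-$ behaves under corner removal, which requires manipulating column-frank words and the associated left nil key, and establishing the enumerative match with $\eta(\alpha)$. A minor additional subtlety is that in the back-stable setting $\alpha - \delta_i$ may contain zero or negative entries, which is permitted by the relaxed conventions on compositions and column strict tableaux introduced earlier, so that every $\olK_{\alpha - \delta_i}$ appearing on the right-hand side is a well-defined back-stable key polynomial.
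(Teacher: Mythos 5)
This statement is left as an open conjecture in the paper (the authors only report verification for Schur cases and for all compositions of size at most $6$), so there is no paper proof to compare against; what matters is whether your outline actually closes the problem, and it does not. Your first two steps are essentially sound, modulo two points that need care: (i) applying $\xi$ termwise through the slide expansion requires noting that $\xi$ is well defined on $\ol{Q}$ via $\xi\olF_{\bs{p}}=\olF_{\xi\bs{p}}$ (Theorem~\ref{thm:nabla-xi-preseve-equiv}) and that this restricts to the Schubert-basis definition on $\ol{R}$, which is fine but should be said; and (ii) the insertion tableau of $\bs{b}^-$ is \emph{not} ``$P$ with a removable corner deleted'' --- it is the tableau obtained by reverse (un)insertion from that corner, which in general changes entries throughout $P$, not just at the corner. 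The correct partition statement (for each corner $c$, the words $\bs{b}\nilsim P$ whose last inserted cell is $c$ biject, via deletion of the last letter, with the nilplactic class of the reverse-bumped tableau $P^{(c)}$ of shape $\lambda\setminus c$) is a standard Edelman--Greene/Coxeter--Knuth fact and would give $\xi(\olK_\alpha)=\sum_{c}\olK_{\mathrm{content}(K^{\bullet}_-(P^{(c)}))}$, but even this needs to be stated and cited correctly in the nilplactic setting rather than for literal corner deletion.

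The genuine gap is your third step, which is precisely the open content of the conjecture: you must show that the multiset $\{\mathrm{content}(K^{\bullet}_-(P^{(c)}))\}$ over removable corners $c$ equals $\{\alpha-\delta_i \;|\; i\in\eta(\alpha)\}$, each occurring once. Your ``key observation'' about the first-occurrence column structure of the key is a heuristic, not an argument: because $P^{(c)}$ differs from $P$ in many entries, there is no immediate relation between $K^{\bullet}_-(P^{(c)})$ and $K^{\bullet}_-(P)$, and the left nil key is computed through column-frank words in the nilplactic class, an operation whose behavior under deleting the last letter of the reading word is exactly what is unknown. You would need, at minimum, a lemma describing how the left nil key transforms under reverse insertion at a given corner (or an equivalent statement in terms of Demazure crystals / right keys for Edelman--Greene classes), together with an argument that corners are matched bijectively with the first occurrences $\eta(\alpha)$ and that repeated values of $\alpha$ never produce new contents. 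As written, the proposal reduces the conjecture to an equivalent unproved combinatorial statement rather than proving it.
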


This conjecture holds for Schur polynomials (i.e. the case where $\alpha$ is a reverse partition). We have checked that it holds for all compositions of size at most 6.

\bibliographystyle{siam}
\bibliography{bibliography.bib}

\end{document}